\newcommand{\E}{\mathbb{E}}
\renewcommand{\P}{\mathbb{P}}
\newcommand{\F}{\mathcal{F}}
\newcommand{\given}{\,\vert\,}
\newcommand{\indicator}{\mathbf{1}}
\newcommand{\fdr}{\mathrm{FDR}}
\newcommand{\fdp}{\mathrm{FDP}}
\newcommand{\ccd}{\mathrm{CCD}}
\newcommand{\fwer}{\mathrm{FWER}}
\newcommand{\arl}{\mathrm{ARL}}
\newcommand{\pfa}{\mathrm{PFA}}
\newcommand{\ger}{\mathrm{GER}}
\newcommand{\eop}{\mathrm{EOP}}
\newcommand{\pfer}{\mathrm{PFER}}
\newcommand{\N}{\mathbb{N}}
\newcommand{\R}{\mathbb{R}}
\renewcommand\hyper@natlinkbreak[2]{#1}
\renewcommand{\tableofcontents}{
    \vspace{1em}
    \@starttoc{toc}
}
\begin{document}

%------------------------------------------------------------
% TITLE
%------------------------------------------------------------
\title{\fontsize{14.93}{\baselineskip}\selectfont \textbf{Multiple testing in multi-stream sequential change detection}}
\author{Sanjit Dandapanthula\thanks{Carnegie Mellon University, Department of Statistics}\\[.4em]
\texttt{\href{mailto:sanjitd@cmu.edu}{sanjitd@cmu.edu}}\\[1em]
Aaditya Ramdas\thanks{Carnegie Mellon University, Department of Statistics and Machine Learning Department}\\[.4em]
\texttt{\href{mailto:aramdas@cmu.edu}{aramdas@cmu.edu}}\\[1em]
}
\date{\today}
\maketitle

%------------------------------------------------------------
% CONTENT
%------------------------------------------------------------
\begin{abstract}
    Multi-stream sequential change detection involves simultaneously monitoring many streams of data and trying to detect when their distributions change, if at all. Here, we theoretically study multiple testing issues that arise from detecting changes in many streams. We point out that any algorithm with finite average run length (ARL) must have a trivial worst-case false detection rate (FDR), family-wise error rate (FWER), per-family error rate (PFER), and global error rate (GER); thus, any attempt to control these Type I error metrics is fundamentally in conflict with the desire for a finite ARL (which is typically necessary in order to have a small detection delay). One of our contributions is to define a new class of metrics which can be controlled, called \emph{error over patience} (EOP). We propose algorithms that combine the recent e-detector framework (which generalizes the Shiryaev-Roberts and CUSUM methods) with the recent e-Benjamini-Hochberg procedure and e-Bonferroni procedures. We prove that these algorithms control the EOP at any desired level under very general dependence structures on the data within and across the streams. In fact, we prove a more general error control that holds uniformly over all stopping times and provides a smooth trade-off between the conflicting metrics. Additionally, if finiteness of the ARL is forfeited, we show that our algorithms control the worst-case Type I error.
\end{abstract}

\pagestyle{empty}
% \tableofcontents
\pagestyle{plain}

\section{Introduction}

\par In single-stream sequential change detection, we observe a sequence of random variables $(X_t)_{t=1}^\infty$ one at a time and (informally) our goal is stop as soon as we detect that there has been some change in the data-generating distribution. A common metric considered in the single-stream setting --- not without its critics --- is the average run length (ARL), which is the expected time until a false alarm when there is no change. One typically seeks change detection algorithms that come with an upper bound on the detection delay while guaranteeing a lower bound on the ARL.

\par The multi-stream sequential change detection setting is concerned with simultaneously detecting changes in many streams. However, an algorithm which is effective in the single-stream setting may not be good when deployed without alteration on each of the multiple streams. This is because we would only obtain a guarantee on the per-stream ARL, but not on error metrics that evaluate performance more holistically across streams.

\par For instance, we may want control over the false detection rate (FDR), which is the expected proportion of detections which are false. False detections may be costly, and one may consequently desire to not make too many of these mistakes. In this case, we may want to control the family-wise error rate (FWER), which is the probability of making any false detection, or the per-family error rate (PFER), which is the expected number of false detections. All of these Type I error metrics have been heavily studied in the multiple testing literature, but their interplay with metrics like the ARL have not been thoroughly studied in the multi-stream change detection context. In fact, we later formally point out that having a finite ARL (which is typically necessary for an algorithm to have a small detection delay) forces the worst-case Type I error to be essentially trivial.

\par Given this inherent and unavoidable tension between the ARL and the Type I error, can we still design procedures that deliver sensible and nontrivial guarantees? We make a novel proposal: we suggest a new class of metrics as \emph{error over patience} (EOP), that take the form:
\begin{align*}
\eop =
\sup_{\text{stopping times } \tau}\; \frac{\text{Type I error at time } \tau}{\mathbb{E}[\tau]}.
\end{align*}
Here, the denominator is termed \emph{patience}. Upper bounding the EOP will force the Type I error to be small when changes are declared quickly (the low-patience regime), but allow the Type I error to be larger when changes are declared more slowly (the high-patience regime). This metric might initially appear odd, so we will spend some time unpacking the definition and justifying that it is sensible; in some sense, it is the best we can hope for given the aforementioned unavoidable tradeoffs. For example, when instantiated under the global null (no change in any stream), the numerator equals 1 as soon as any change is declared, and the denominator equals the ARL. Therefore, upper bounding the EOP yields a lower bound on the ARL, as one would typically desire.

\par A new metric is useful only if it can be controlled. We present a somewhat general framework for multi-stream change detection that combines two modern tools involving \emph{e-values}. In particular, we combine the recent \emph{e-detector} framework for single-stream change detection with the recent \emph{e-Benjamini-Hochberg} (e-BH) procedure for FDR control and the \emph{e-Bonferroni} procedure for PFER control, and show that the resulting procedures can bound the EOP at any predefined level. Note that the guarantee holds \textit{uniformly over all stopping times}, by definition of the EOP; this can include the time of the first detection. We call these the \emph{e-d-BH} and \emph{e-d-Bonferroni} procedures. One benefit of using e-detectors is that our algorithms are widely applicable even when the pre-change distribution is unknown but comes from a composite (possibly nonparametric) class, and under very general dependence structures between the data across streams. Finally, we show that if finiteness of the ARL is forfeited, the e-d-BH and e-d-Bonferroni methods provide universal control over the FDR and PFER respectively.

\par To summarize, this paper has four main contributions:
\begin{itemize}
    \item Articulating the fundamental tension between the Type I error and ARL.
    \item Proposing a new class of metrics: the EOP --- this is the Type I error over patience.
    \item Developing algorithms (e-d-BH, e-d-Bonferroni, and e-d-GNT) that can control the EOP metric (and more) under rather general settings and weak assumptions.
    \item Showing that our algorithms control the worst-case Type I error and bound the probability of false alarm (PFA) when the ARL is infinite.
\end{itemize}

\par As is typical in the multiple testing literature, this paper provides bounds on false detections (Type I errors) rather than true detections (power), and on the ARL rather than detection delay. This is for two reasons; first, the initial part of the paper will be agnostic to the precise details of the problem setup and the specific change detection procedure one chooses to use for each stream. While the second part of the paper does specify which change detection \emph{framework} to use (e-detectors), any further details of the problem setup will not matter to us. Thus, we present our methods and guarantees abstractly, but discussions of detection delay must necessarily be grounded in the specifics of concrete change detection problems. This is somewhat analogous to the Benjamini-Hochberg (or e-BH) procedure taking in a set of p-values (or e-values), but being agnostic to their power due to the lack of assumptions on what hypotheses are being tested or how the p-values were constructed. These issues are of course important in practice, but they are irrelevant for presenting the BH procedure and its Type I error guarantees. For the same reason, we do not discuss computational issues in this paper --- computationally efficient schemes for multi-stream change detection are practically important, but the methods will depend on details of the problem definition. Instead, we aim to provide a more abstract (and thus more comprehensive from that regard) treatment of the interplay between Type I error metrics and the ARL.

\par We begin by formalizing the problem and setting up some notation. We will not do this all at once; the e-d-BH framework (and the past work it builds on: e-detectors and the e-BH procedure) is likely unfamiliar to most readers and can appear mysterious at first glance, so we will introduce the appropriate background as needed. We will also provide several examples later in the paper that instantiate the framework for concrete problems.

\section{Background}

We begin with some background about multi-stream change detection.

\subsection{Multi-stream sequential change detection} \label{sec:background-cd}

\par Consider a general formulation of the multi-stream sequential change detection problem. 
We allow the streams to be dependent and allow the observations within each stream to also be dependent.  
Technically, each stream could have data lying in any space, but for simplicity we present the framework assuming that each stream has real-valued observations. Let $\mathcal{M}(\R^\N)$ denote the set of probability measures over $\R^\N$ (these are joint distributions over infinite sequences of observations, which are not necessarily i.i.d.).

\par Suppose we are monitoring $K \in \N$  streams of data, denoted $(X_t^{(k)})_{t=1}^\infty$ for $k \in [K]$, where $[K] = \{1,\dots,K\}$. We observe the data sequentially, meaning that at time $t$, we observe $(X_t^{(1)}, \dots, X_t^{(K)})$. Suppose each stream $k \in [K]$ has a single changepoint at some unknown time $1 < \xi^{(k)} \leq \infty$ (where $\xi^{(k)} = \infty$ means that there is no change in the $k$th stream). We use $\xi = (\xi^{(1)}, \dots, \xi^{(K)}) \in \N^K$ to denote the list of true changepoints; we call this the \emph{configuration}. We'll use $\xi_G = (\infty, \dots, \infty)$ to denote the special configuration corresponding to the \emph{global null} (when there is no change in any stream). We will not assume that the pre-change or post-change distributions in each stream are known exactly, but only that they lie in some known sets; let $\mathcal{P}_0^{(k)},\, \mathcal{P}_1^{(k)} \subseteq \mathcal{M}(\R^\N)$ be the pre-change and post-change classes of distributions over data sequences $(X_t^{(k)})_{t=1}^\infty$ for each stream $k \in [K]$. Importantly, these classes refer to the joint distributions \emph{across time} of each individual stream --- but the joint distribution \emph{across streams} is arbitrary (see \Cref{sec:csdependence} for a discussion on implicit restrictions on the cross-stream dependence that is placed by later results).

\par Then, suppose that $(X_t^{(k)})_{t=1}^{\xi^{(k)}-1}$ is drawn from some unknown distribution $\mathbb{P}_0^{(k)} \in \mathcal{P}_0^{(k)}$ (more formally, from its restriction to the first $\xi^{(k)} - 1$ coordinates), and that $(X_t^{(k)})_{t=\xi^{(k)}}^\infty$ is drawn from some distribution $\mathbb{P}_1^{(k)} \in \mathcal{P}_1^{(k)}$. Let $\mathcal F^{(k)}$ denote a filtration for the $k$-th stream, which if left unspecified is taken to be the natural filtration of the data:
\begin{align*}
    \mathcal{F}^{(k)}_{t} = \sigma(X^{(k)}_1, \dots, X^{(k)}_{t}).
\end{align*}
For example, consider a scenario in which we want to detect a change in symmetricity, meaning that pre-change observations are assumed to have a symmetric conditional distribution around zero, while post-change observations do not. One can think of symmetry testing as a nonparametric generalization of the $t$-test, which assumes that the data is a centered Gaussian with unknown variance. Letting $\overset{d}{=}$ denote equality in distribution, we could choose:
\begin{align*}
    \mathcal{P}_0^{(k)} = \left\{ \P \in \mathcal{M}(\R^\N): (X_t \overset{d}{=} -X_t) \given \mathcal{F}^{(k)}_{t-1}\; \text{ for all }\, t \in \N, \text{ where } (X_t)_{t=1}^\infty \sim \P \right\}.
\end{align*}
Then, $\mathcal{P}_1^{(k)}$ would be some subset of the complement of $\mathcal{P}_0^{(k)}$.

\par Notice that in general, the sets $\mathcal{P}_0^{(k)}$ and $\mathcal{P}_1^{(k)}$ need not be parametric, and this framework allows for dependence within each stream. For example, the pre-change class contains distributions of the type: ``Draw the first stream's observation from a standard Gaussian. If it is positive, draw the second stream's observation from a standard Gaussian, else draw it from a standard Cauchy''. As mentioned before, our algorithms can handle dependence with streams and across streams in some scenarios as well, which we discuss in \Cref{sec:csdependence}.

\par We will let $\mathbb{P}$ denote the true data generating distribution (over all $K$ data streams and for all times $t \in \N$). Note that the data distribution $\P$ depends implicitly on the configuration $\xi$, although this dependence may be hidden in future notation. Furthermore, suppose we have a global filtration $\F = (\F_t)_{t=1}^\infty$. If the filtration is left unspecified, let $\F$ be the natural global filtration generated by all the data:
\begin{align*}
    \F_t = \sigma\left(  X_j^{(k)}:\, j \in [t],\, k\in [K] \right).
\end{align*}

\par Throughout this paper, we use the notation $x \vee y = \max\{ x, y \}$ and $x \wedge y = \min\{ x, y \}$.

\definition[Stopping times]{A \textit{stopping time} is a random variable $\tau$ such that the event $\{ \tau \leq t \}$ is $\F_t$-measurable for all $t \in \N$.}

\smallskip

\par Naturally,  stopping times in this paper are allowed to depend on data from all streams, so they are based on the global filtration $\F$ (and not any stream-specific filtration $\F^{(k)}$).

\definition[Sequential change monitoring algorithm]{A \textit{sequential change monitoring algorithm} is a list of processes $(\varphi_t^{(k)})_{t=1}^\infty \in \{ 0, 1 \}^\N$ for $k \in [K]$, where $\varphi^{(k)}$ is adapted to $\F$. We let $\varphi^{(k)}$ denote the sequence $(\varphi_t^{(k)})_{t=1}^\infty$ and we let $\varphi = (\varphi^{(1)},\dots,\varphi^{(K)})$.}
\smallskip

\par We interpret $\{ \varphi_t^{(k)} = 0 \}$ to be the event that the algorithm has not found that a change in stream $k$ has previously occurred before time $t$ and $\{ \varphi_t^{(k)} = 1 \}$ to be the event that the algorithm did detect that a change in stream $k$ has occurred before time $t$. Suppose that at some time $t$, the algorithm detected a change in stream $k$; later, upon accumulating more evidence, we allow the algorithm to change its mind and decide that there was actually no change in stream $k$. In short, we do not require that $\varphi_t^{(k)} = 1$ necessarily implies $\varphi_s^{(k)} = 1$ for $s > t$ (although for specific algorithms it could). Note that we allow $\varphi^{(k)}$ to be adapted to $\F$ and do not restrict it to be adapted to $\F^{(k)}$, meaning that $\varphi^{(k)}$ is allowed to depend on data from all streams up to time $t$, not just the $k$-th stream. This will later allow us to control error metrics (like the EOP) that evaluate decisions across all streams.

\par A change monitoring algorithm can be continually monitored, and sequentially gives the user a changing subset of streams in which it has thus far detected a change. In our terminology, a change \emph{monitoring} algorithm needs to be coupled with a stopping time in order to yield a change \emph{detection} algorithm.
 
\definition[Sequential change detection algorithm]{A \textit{sequential change detection algorithm} is a pair $(\varphi,\tau)$, where $\varphi$ is a change monitoring algorithm and $\tau$ is a stopping time (with respect to $\F$). At time $\tau$, the algorithm stops and outputs $(\varphi_\tau^{(k)})_{k=1}^K$, representing its final decisions at time $\tau$ about which streams have purportedly had a change by time $\tau$.}

\smallskip

\par In practice, one may run the change monitoring algorithm and continuously peek at the data $(X_t^{(k)})_{k=1}^K$ (if the filtration $\F$ is the natural one) in order to decide when to stop. One important summary statistic of the data is the value of an e-detector, which we will define later in \Cref{sec:edetcd}. For instance, $\tau$ can be the first time that the e-detector exceeds some threshold in at least $\eta$ streams for some $\eta \geq 1$. In practice, $\tau$ may even depend on random noise irrelevant to the problem or on external circumstances; for example, a researcher may choose to stop the monitoring algorithm due to lack of funds to continue an experiment or when the number of claimed detections stays constant for ten steps. Therefore, we would like guarantees on the performance of the monitoring algorithm  which are robust to the choice of $\tau$. This motivates the supremum over $\tau$ taken in the definition of the EOP (previewed in the introduction but formally defined later).

\definition[Global null]{For stream $k$, define the null hypothesis $\mathcal{H}_0^{(k)}: \xi^{(k)} = \infty$. We define the \textit{global null} $\mathcal{H}_0^*$ to be the scenario in which $\mathcal{H}_0^{(k)}$ is true for all $k$; namely, this is the scenario in which there are no changes at all. We will let $\E_\infty[\, \cdot\, ]$ denote expectation under the global null and $\P_\infty(\, \cdot\, )$ denote the probability measure under the global null.}

\smallskip

\par In the single-stream setting, the average run length (ARL) is defined as the expected first time that an algorithm declares a change, when there is in fact no change. Here, we extend the definition of the ARL to the more general multi-stream setting, and the single-stream ARL will be a special case when $K = \eta = 1$. 

\definition[Average run length ($\arl_\eta(\varphi)$)]{For a given multi-stream change monitoring algorithm $\varphi$, let $\tau^*_\eta(\varphi)$ denote the first time that our algorithm declares a change in at least $1 \leq \eta \leq K$ streams:
\begin{align}\label{eq:taustar}
    \tau^*_\eta(\varphi) = \inf_{t \in \N}\, \left\{ \sum_{k=1}^K \varphi_t^{(k)} \geq \eta \right\}.
\end{align}
Then, the \textit{average run length} $\arl_\eta(\varphi)$ is defined as:
\begin{align*}
    \arl_\eta(\varphi) = \E_\infty[\tau^*_\eta(\varphi)].
\end{align*}
Let $\xi \in \N^K$ be a configuration. Overloading notation slightly, we define the first time that our algorithm makes false detections in at least $1 \leq \eta \leq K$ streams, under $\xi$:
\begin{align}\label{eq:taustarconf}
    \tau^*_\eta(\varphi, \xi) = \inf_{t \in \N}\, \left\{ \sum_{k=1}^K \varphi_t^{(k)}\, \indicator_{\xi^{(k)} > t} \geq \eta \right\}.
\end{align}
Note that $\tau^*_\eta(\varphi) = \tau^*_\eta(\varphi, \xi_G)$. Analogously, we can define the average run length to $\eta$ false detections under $\xi$:
\begin{align*}
    \arl_\eta(\varphi, \xi) = \E[\tau^*_\eta(\varphi, \xi)].
\end{align*}
Note that $\arl_\eta(\varphi)$ and $\arl_\eta(\varphi, \xi)$ could also be described as the ``average run length to $\eta$ false detections'' under $\xi_G$ and $\xi$ respectively.}

\smallskip

\par While $\tau^*_\eta(\varphi)$ and $\tau^*_\eta(\varphi, \xi)$ are both $\F$-stopping times, the first depends only on known quantities and the user can stop at that time in practice. This is not true of $\arl_\eta(\varphi, \xi)$ (since $\xi$ is unknown) so $\arl_\eta(\varphi, \xi)$ is mostly a quantity of theoretical interest. We typically want the ARL to be large. For example, one might want the ARL to be larger than $1 / \alpha$ for some $\alpha \in (0, 1)$. However, methods with a large ARL must also have a large detection delay;  the detection delay typically scales proportional to $\log(1 / \alpha)$ (see \citet{chan2017optimal}). Therefore, the most interesting regime is that in which the ARL is large but still finite, since it is only then that we can still hope to control the detection delay. When the ARL is infinite, one metric of interest is the probability of false alarm (PFA).

\definition[Probability of false alarm ($\pfa(\varphi)$)]{For a given multi-stream change monitoring algorithm $\varphi$, recall from \Cref{eq:taustar} that $\tau^*_1(\varphi)$ denotes the first time that $\varphi$ declares any change. Then, the \textit{probability of false alarm $\pfa(\varphi)$} is defined as:
\begin{align*}
    \pfa(\varphi) = \P_\infty(\tau^*_1(\varphi) < \infty).
\end{align*}

\smallskip

\par One may prefer to use change detectors with bounded PFA when the cost of a false alarm is extremely high, but the price paid is that the worst case detection delay (in the case of a true change) is unbounded. In contrast, algorithms that tolerate some false alarms by having a finite ARL also have a finite worst case detection delay (scaling logarithmically in the ARL). Our work is able to handle both types of change detectors.

\par Let $\xi$ denote a configuration. Recall that $\tau^*_1(\varphi, \xi)$ denotes the first time that $\varphi$ makes a false declaration under $\xi$. Overloading notation, we can analogously define the probability of false alarm given a configuration $\xi$:
\begin{align*}
    \pfa(\varphi,\xi) = \P(\tau^*_1(\varphi,\xi) < \infty) = \P\left( \bigcup_{t=1}^\infty \bigcup_{k=1}^K\, \{ \varphi_t^{(k)} = 1 \} \cap \{ \xi^{(k)} > t \} \right).
\end{align*}
}

\smallskip

\definition[False detection rate at time $t$ ($\fdr(\varphi, \xi, t)$)]{For a sequential change monitoring algorithm $\varphi$ and configuration $\xi$, the false detection proportion at time $t$ ($\fdp(\varphi, \xi, t)$) is the ratio of null hypotheses rejected to total hypotheses rejected at time $t \in \N$:
\begin{align*}
    \fdp(\varphi, \xi, t)
    = \frac{\sum_{k=1}^K \varphi_t^{(k)}\, \indicator_{\xi^{(k)} > t}}{\left( \sum_{k=1}^K \varphi_t^{(k)} \right) \vee 1}.
\end{align*}
Here, $\fdp(\varphi, \xi, t)$ is a random variable. The \textit{false detection rate at time $t$} ($\fdr(\varphi, \xi, t)$) measures the expected false detection proportion (FDP) at time $t \in \N$:
\begin{align*}
    \fdr(\varphi, \xi, t)
    = \E[\fdp(\varphi, \xi, t)]
    = \E\left[ \frac{\sum_{k=1}^K \varphi_t^{(k)}\, \indicator_{\xi^{(k)} > t}}{\left( \sum_{k=1}^K \varphi_t^{(k)} \right) \vee 1} \right].
\end{align*}}

\definition[Family-wise error rate at time $t$ ($\fwer(\varphi, \xi, t)$)]{For a given sequential change monitoring algorithm $\varphi$ and configuration $\xi$, the \textit{family-wise error rate at time $t$} ($\fwer(\varphi, \xi, t)$) measures the probability of having at least one false detection in any stream at time $t \in \N$:
\begin{align*}
    \fwer(\varphi, \xi, t)
    = \P\left( \bigcup_{k=1}^K\, \{ \varphi_t^{(k)} = 1 \} \cap \{ \xi^{(k)} > t \} \right).
\end{align*}}

\definition[Per-family error rate at time $t$ ($\pfer(\varphi, \xi, t)$)]{For a given sequential change monitoring algorithm $\varphi$ and configuration $\xi$, the \textit{per-family error rate at time $t$} ($\pfer(\varphi, \xi, t)$) measures the expected number of false detections at time $t \in \N$:
\begin{align*}
    \pfer(\varphi, \xi, t)
    = \E\left[ \sum_{k=1}^K \varphi_t^{(k)}\, \indicator_{\xi^{(k)} > t} \right].
\end{align*}}

\smallskip

\par These three metrics are the most common measures of Type I error when testing multiple hypotheses. Note that the PFER is a more stringent metric than the FWER, which in turn is a more stringent metric than the FDR. Since the configuration $\xi$ is unknown, we typically aim to control these metrics uniformly over $\xi$.

\par Next, we give some useful background for the construction of our algorithm. In the following definitions, one should think of $\mathcal{P}_0$ as the (possibly composite) null hypothesis. In the context of change detection, one should think of $\mathcal{P}_0$ as the set of all global distributions $\P$ such that $\P_0^{(k)} \in \mathcal{P}_0^{(k)}$, $\P_1^{(k)} \in \mathcal{P}_1^{(k)}$, and $\xi^{(k)} = \infty$ for $k \in [K]$. Furthermore, in the context of change detection, one should think of $\F$ as the global (cross-stream) filtration.

\subsection{Change detection using e-detectors} \label{sec:edetcd}

\par We first begin by defining e-processes, the building blocks of e-detectors. We do so in the single stream setting; when there are multiple streams, one can apply these definitions separately on a per-stream basis.

\definition[e-processes]{A process $(E_t)_{t \geq 0}$ is called an \textit{e-process} for a set of distributions $\mathcal{P}_0$ (with respect to a filtration $\F$) if it is a nonnegative $\F$-adapted process satisfying $\E_\P[E_\tau] \leq 1$ for all stopping times $\tau$ (with respect to $\F$) and distributions $\P \in \mathcal{P}_0$.}

\smallskip

\par Note that an e-process is a generalization of a nonnegative supermartingale, which one can think of as a wealth process in an unfair game (where the odds are not in your favor). In this context, one can think of an e-process as accumulating evidence against the null hypothesis (there is no change in a particular stream). By Ville's martingale inequality, it follows from this definition that for all $\alpha > 0$ and all $\P \in \mathcal{P}_0$:
\begin{align*}
    \P\left( \sup_{t \in \N} E_t \geq \frac{1}{\alpha} \right) \leq \alpha.
\end{align*}
In particular, the e-process is uniformly bounded with high probability under the null. The goal is to create an e-process which grows quickly under the alternative hypothesis for a powerful test. See \citet{ramdas2023game} for more details on e-processes.

\par Using e-processes, we are now ready to construct e-detectors.

\definition[$j$-delay e-processes]{A process $(\Lambda^{(j)}_t)_{t \geq 0}$ is called a \textit{$j$-delay e-process}\footnote{In \citet{shin2022detectors}, the authors use the term $\mathrm{e}_j$-process for a $j$-delay e-process.} for a set of distributions $\mathcal{P}_0$ (with respect to a filtration $\F$) if it satisfies $\Lambda^{(j)}_t = 1$ for $1 \leq t \leq j - 1$ and if $(\Lambda^{(j)}_t)_{t=j}^\infty$ is an e-process for $\mathcal{P}_0$ (with respect to $(\F_t)_{t=j}^\infty$).}

\smallskip

\par Note that a $j$-delay e-process is also an e-process with respect to $\F$.

\definition[e-detectors]{A process $(M_t)_{t=0}^\infty$ is called an \textit{e-detector} for a set of distributions $\mathcal{P}_0$ (with respect to a filtration $\F$) if $M_0 = 0$ and $(M_t)_{t=1}^\infty$ is a nonnegative $\F$-adapted process satisfying $\E_\P[M_\tau] \leq \E_\P[\tau]$ for all $\F$-stopping times $\tau$ and for all $\P \in \mathcal{P}_0$.
}

\smallskip

\par In the context of multi-stream change detection, an e-detector for $\F$ in stream $k \in [K]$ is a nonnegative $\F$-adapted process which satisfies $\E_\P[M_\tau] \leq \E_\P[\tau]$ for all $\F$-stopping times $\tau$ and global distributions $\P$ with $\xi^{(k)} = \infty$.

\par Note that the e-detector is defined by a bound on how quickly the process $(M_t)_{t=1}^\infty$ can grow when there is no change. The e-detector can intuitively be thought of as a generalization of the popular CUSUM and Shiryaev-Roberts (SR) procedures for change detection. If the pre-change and post-change distributions are known (and have densities $p_0$ and $p_1$ respectively with respect to some common reference measure), then the CUSUM procedure corresponds to the following e-detector (recall that $M_0 = 0$):
\begin{align*}
    M_{t+1} = \frac{p_1(X_{t+1})}{p_0(X_{t+1})}\, \max\{ M_t,\, 1\}.
\end{align*}
Similarly, the SR procedure corresponds to using the following e-detector (recall that $M_0 = 0$):
\begin{align*}
    M_{t+1} = \frac{p_1(X_{t+1})}{p_0(X_{t+1})}\, (M_t + 1).
\end{align*}
More generally, here is how to construct e-detectors using e-processes: given $j$-delay e-processes $\Lambda^{(j)}$ for $j \in \N$, one can define the SR e-detector:
\begin{align*}
    M_t^{\mathrm{SR}} = \sum_{j=1}^t \Lambda^{(j)}_t.
\end{align*}
It is easy to verify that this is an e-detector using Wald's equation. Similarly, one can define the CUSUM e-detector:
\begin{align*}
    M_t^{\mathrm{CU}} = \max_{1 \leq j \leq t} \Lambda^{(j)}_t.
\end{align*}
Both of these are e-detectors that in fact reduce to the earlier SR and CUSUM formulae when $\Lambda^{(j)}_t$ is the likelihood ratio of $p_1$ to $p_0$ from time $j$ to $t$. In particular, the problem of constructing e-detectors comes down to constructing e-processes. Analogously, given weights $(\gamma_t)_{t=1}^\infty \in [0, 1]^\N$ satisfying $\sum_{t=1}^\infty \gamma_t = 1$, we can construct the SR e-process:
\begin{align*}
    E_t^\mathrm{SR} = \sum_{j=1}^t \gamma_j\, \Lambda_t^{(j)}.
\end{align*}
It is easy to verify that this is an e-process using Wald's equation and linearity of expectation. For more details, including how to construct e-detectors for a variety of processes and how to compute them more efficiently, see \citet{shin2022detectors}. For more discussion on the construction of e-processes for a large variety of composite null hypotheses, see \citet{ramdas2023game}.

\par As such, an e-detector by itself does not involve making a decision at any stopping time to declare a change --- it is simply a nonnegative process that quantifies evidence of a change. One would typically declare a change when the e-detector crosses some threshold $c_\alpha$, which is chosen so that the ARL is (at least) $1/\alpha$.

\par In simple settings, such as when $\mathcal{P}_0$ is simple (and thus known), $c_\alpha$ can be accurately determined using simulations. One advantage of using e-detectors (over standard methods for change detection like CUSUM or SR) is that e-detectors can be constructed for nonparametric change detection when $\mathcal{P}_0$ is composite and potentially very complicated. In such cases, it may be hard to run simulations to calculate $c_\alpha$ numerically, and one can resort to the backup choice of $c_\alpha = 1/\alpha$, which is always a valid (though conservative) choice of threshold.

\par A few examples of pre-change classes $\mathcal{P}_0$ for which one can construct e-detectors are: a) distributions where likelihood ratios with $\mathcal{P}_1$ are well-defined, b) distributions with a negative mean, c) distributions over a pair (used for two-sample testing about equality of distributions), d) symmetric distributions, e) log-concave distributions, f) exchangeable distributions, g) product distributions (used for tests about independence). This is by no means an exhaustive list, and a more thorough discussion of this topic can be found in \citet{shin2022detectors}. We will see that another advantage of using e-detectors is that our methods can be applied under very general dependence structures between the streams. Next, we will review some related work in the field of change detection.

\section{Related work}

In this section, we discuss some related work in the field of change detection. In particular, we point out that the issue of multiple testing has not been comprehensively addressed in prior multi-stream change detection literature, especially from a theoretical perspective.

\subsection{Multiple testing in single-stream change detection}

\par The need for control over multiple testing statistics has been discussed in the related context of multiple change detection in a \emph{single stream}; here, we briefly mention some of the prior work in this area. For example, \citet{bodenham2017continuous} introduce the metrics\footnote{The CCD is the proportion of changed streams in which a change monitoring algorithm correctly declares a change. Also, \citet{bodenham2017continuous} and others use the metric $\mathrm{DNF} = 1 - \fdr$.} CCD (changes correctly detected) and FDR for change detection.  They argue that current metrics used for change detection in streams (ARL and detection delay) are insufficient and that current algorithms, even in the single-stream setting, do not seem to have good theoretical control on CCD and FDR. They provide empirical estimates on these quantities and show that some of the popular methods in the area (such as \citet{jiang2008adaptive}) actually do not do a good job of controlling the CCD or FDR.

\par \citet{plasse2021streaming} studies change detection in the context of transition matrices of Markov processes. In this work, ARL and detection delay are referenced as being insufficient to fully characterize the quality of their method. In addition to these metrics, the empirical $F_1$ scores of the algorithm in \citet{bodenham2017continuous} is also reported. However, they do not provide any theoretical guarantees on the $F_1$ score.

\par \citet{plasse2019multiple} discusses a new method for multiple change detection in a single data stream, specifically in the setting where the data is categorical. Even so, \citet{plasse2019multiple} references \citet{bodenham2017continuous} to point out that ARL and detection delay are not sufficient to analyze the performance of a change detector. Therefore, they empirically estimate the CCD and FDR of their method, but as with most methods in this field, do not provide any theoretical bounds on these quantities or discuss the relationship between the FDR and the ARL.

\subsection{Multiple testing in multi-stream change detection}

\par \citet{mei2010efficient} develops methods that work for sequential multi-stream change detection when the number of streams is large. Prior work had used mixture likelihood ratio methods but \citet{mei2010efficient} offered more computationally efficient methods. \citet{mei2010efficient} argues that ``the Type I and II error [...] criteria of the off-line hypothesis testing problem have been replaced by the average run length [...] in the online change detection problem'' \citet[p. 2]{mei2010efficient}. The setting in their work is as follows: suppose you are monitoring $K$ data streams, where streams $\{ k_1, \dots, k_m \}$ change at the same time $\nu$ (although the methods generalize when these streams do not all change at the same time). The goal of \citet{mei2010efficient} is to minimize the detection delay ($\E^{\{ k_1, \dots, k_m \} \subseteq [K]}[\tau]$) subject to an ARL constraint ($\E_\infty[\tau] \geq \gamma$). But even if the detection delay is small and the ARL is large but not infinite, this objective does not identify streams with changes or provide any clean control of the false detection rate (which would equal 1 under the global null, as proved later).

\par \citet{xu2021multi} is a more recent paper with the same objective; they show the second-order asymptotic optimality of a class of methods for multi-stream change detection under this regime. Once more, the identification of streams (and the resulting Type I errors) are not discussed.

\par \citet{chan2017optimal} derives asymptotic lower bounds on the detection delay and thereby shows that a modified version of a mixed likelihood ratio method is optimal for the problem described in \citet{mei2010efficient}. The goal again is to minimize detection delay subject to an ARL constraint. As above, \citet{chan2017optimal} does not have any mention to CCD, FDR, or any similar metrics.

\par \citet{tartakovsky2024quickest} studies the detection of changes in multi-stream stochastic models, under the Bayesian multi-stream change detection regime. In their paper, they define the metric $\mathrm{GER} = \sum_{k=0}^\infty \pi_k \, \P_\infty(\tau \leq k)$ (where $\pi_k$ is a prior distribution over the true changepoint $\xi$); this metric represents the probability of a false alarm (related to the ARL). The goal as described in their work is to design a method whose GER is bounded by $\alpha$ and whose detection delay is minimal, plus a term which is $o(1)$ as $\alpha \downarrow 0$. This goal is stricter than control over the FDR, since it bounds the probability that there is ever a false alarm.

\par \citet{chen2020false} is the first to propose methods for multi-stream change detection that control the FDR instead of the FWER (which can be difficult to control in general), in the Bayesian change detection regime. They present three methods, each of which have asymptotic guarantees on the FDR and average detection delay. Similar methods are developed by \citet{lu2022optimal}, under the same assumptions. However, this work has relatively restrictive assumptions, including that the true location of the changepoint is i.i.d. across streams, that the practitioner has a prior distribution over the changepoint location (which is the same across streams), and that each data stream is independent of the others. These assumptions may be difficult to satisfy in practice, and our results hold in a much more general setting.

\par Note that none of the above works treat the FDR and ARL simultaneously. For more examples of such works, see \citet{wang2015large, xie2013sequential, eriksson2018computationally, chen2023compound}. We will now show that it is impossible to achieve nontrivial FDR or FWER when the ARL is finite, which then leads into our proposed metric (EOP) and general algorithms to control it.

\section{The e-detector Benjamini-Hochberg (e-d-BH) procedure}

\par We begin with an impossibility result to motivate our discussion of the new EOP metric for simultaneous control over the FDR and patience. This impossibility phenomenon is fundamental, but straightforward (and even obvious in hindsight). Nevertheless, it does not appear to have been articulated formally and generally, as we do below.

\subsection{Impossibility of controlling worst-case FDR with a finite ARL}
\label{sec:impossibility}

\par Suppose we are in the setting of a multi-stream change detection problem as described in \Cref{sec:background-cd}. Recall that $\fdr(\varphi, \xi, \tau)$ is the expected false discovery proportion of the change monitoring algorithm $\varphi$ if stopped at time $\tau$ with configuration $\xi$, that the global null corresponds to the configuration where no change occurs in any stream ($\xi^{(k)} = \infty$ for all streams $k \in [K]$), and that $\tau^*_\eta(\varphi)$ is the first time that $\eta$ changes are declared.

\begin{theorem}[Relating the FDR and ARL] \label{thm:eop-fdr}
    Consider a multi-stream change monitoring algorithm $\varphi$ for which $\arl_1(\varphi) = \E_\infty[\tau^*_1(\varphi)] < \infty$. If $\mathcal{T}$ denotes the set of all stopping times with respect to $\F$, then we have:
   \begin{align*}
       \sup_{\xi \in \N^K}\, \sup_{\tau \in \mathcal{T}}\, \fdr(\varphi, \xi, \tau) = \fdr(\varphi, \xi_G, \tau^*_1(\varphi)) = 1.
   \end{align*}
\end{theorem}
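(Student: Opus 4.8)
The plan is to sandwich the double supremum between $1$ and $1$, exhibiting the pair $(\xi_G, \tau^*_1(\varphi))$ as an explicit maximizer. First I would record the trivial upper bound. At every finite time $t$ and under every configuration $\xi$, the numerator $\sum_{k} \varphi_t^{(k)}\, \indicator_{\xi^{(k)} > t}$ of the FDP never exceeds $\sum_{k} \varphi_t^{(k)}$, since each indicator is at most $1$; and $\sum_{k} \varphi_t^{(k)} \leq (\sum_{k} \varphi_t^{(k)}) \vee 1$, which is exactly the denominator. Hence $\fdp(\varphi, \xi, t) \leq 1$ pointwise, so $\fdr(\varphi, \xi, \tau) \leq 1$ for every $\xi$ and every $\tau \in \mathcal{T}$, giving $\sup_\xi \sup_\tau \fdr(\varphi, \xi, \tau) \leq 1$.

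For the matching lower bound I would specialize to the global null $\xi_G = (\infty, \dots, \infty)$ and the stopping time $\tau^*_1(\varphi)$, which lies in $\mathcal{T}$ by the remarks following its definition. Under $\xi_G$, every indicator $\indicator_{\xi^{(k)} > t}$ equals $1$ at each finite $t$, so the FDP collapses to $\indicator_{\sum_{k} \varphi_t^{(k)} \geq 1}$, the indicator that at least one change has been declared. By the very definition of $\tau^*_1(\varphi)$ as the first time $t$ with $\sum_{k} \varphi_t^{(k)} \geq 1$, and since the infimum of a nonempty subset of $\N$ is attained, on the event $\{\tau^*_1(\varphi) < \infty\}$ we have $\sum_{k} \varphi_{\tau^*_1(\varphi)}^{(k)} \geq 1$, so the FDP equals $1$ there.

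It then remains to dispose of the event $\{\tau^*_1(\varphi) = \infty\}$. Here I would invoke the hypothesis $\arl_1(\varphi) = \E_\infty[\tau^*_1(\varphi)] < \infty$: a nonnegative random variable with finite expectation is almost surely finite, so $\tau^*_1(\varphi) < \infty$ holds $\P_\infty$-almost surely. Consequently $\fdr(\varphi, \xi_G, \tau^*_1(\varphi)) = \E_\infty[\indicator_{\tau^*_1(\varphi) < \infty}] = 1$. Combining with the upper bound yields $\sup_\xi \sup_\tau \fdr(\varphi, \xi, \tau) = \fdr(\varphi, \xi_G, \tau^*_1(\varphi)) = 1$.

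I expect the only genuine subtlety to be the treatment of the FDP when a stopping time takes the value $\infty$: one must fix a convention for $\fdp(\varphi, \xi, \tau)$ on $\{\tau = \infty\}$, but both halves of the argument are robust to it. The upper bound is unaffected because the pointwise bound $\fdp \leq 1$ holds under any convention valued in $[0,1]$, and the lower bound never engages the event $\{\tau^*_1(\varphi) = \infty\}$ at all, since the finite-ARL hypothesis assigns it probability zero under $\P_\infty$. The conceptual content is simply that a finite ARL forces a declaration under the global null almost surely, and any first declaration made under the global null is necessarily a false one.
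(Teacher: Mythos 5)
Your proof is correct and takes essentially the same route as the paper's: exhibit the pair $(\xi_G, \tau^*_1(\varphi))$ as the maximizer, observe that any detection made at $\tau^*_1(\varphi)$ under the global null is necessarily false so the FDP there equals $1$ on $\{\tau^*_1(\varphi) < \infty\}$, and use finiteness of the ARL to conclude that this event has full $\P_\infty$-probability. If anything, your version is tighter: the paper detours through Markov's inequality to obtain an $\alpha/(1+\alpha)$ lower bound on the probability of finiteness, a step that is logically superfluous since (as you note) finite expectation already gives almost-sure finiteness, which is exactly what the paper's final sentence implicitly relies on to conclude $\fdr(\varphi, \xi_G, \tau^*_1(\varphi)) = 1$.
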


\begin{proof}
Suppose that we are in the scenario of the global null. Markov's inequality gives:
\begin{align*}
    \P(\tau^*_1(\varphi) \geq (1 + \alpha) \, \arl_1(\varphi)) \leq \frac{1}{1 + \alpha}.
\end{align*}
So with probability at least $\alpha / (1 + \alpha)$, we have $\tau^*_1(\varphi) \leq (1 + \alpha) \, \arl_1(\varphi) < \infty$. Conditioning on this event, we have:
\begin{align*}
    \max_{1 \leq t \leq (1 + \alpha)\, \arl_1(\varphi)}\; \sum_{k=1}^K \varphi_t^{(k)}
    \geq \sum_{k=1}^K \varphi_{\tau^*_1(\varphi)}^{(k)}
    \geq 1
    > 0.
\end{align*}
Finally, it follows that $\max_{1 \leq t \leq (1 + \alpha)\, \arl_1(\varphi)}\, \fdp(\varphi, t) = \fdp(\varphi, \tau^*_1(\varphi)) = 1$ with probability at least $\alpha / (1 + \alpha)$, since under the global null we have $\xi^{(k)} = \infty$ for all streams $k \in [K]$. Furthermore, we find that $\sup_{\tau \in \mathcal{T}}\, \fdr(\varphi, \xi_G, \tau) = \fdr(\varphi, \xi_G, \tau^*_1(\varphi)) = 1$ because $\fdp(\varphi, \tau^*_1(\varphi)) = 1$ on the event $\{ \tau^*_1(\varphi) < \infty \}$.
\end{proof}

\par In particular, no change detection algorithm $(\varphi, \tau)$ can be guaranteed to have a nontrivial FDR (uniformly over stopping times $\tau$) when $\arl_1(\varphi)$ is finite, under the global null. In general, this is true even if $\arl_1(\varphi, \xi) < \infty$ for any configuration $\xi$, since the ARL is always smaller under the global null. Intuitively, if we have an almost surely finite stopping time with $\eta \geq 1$ detections under the global null, then the FDR at that stopping time must be at least 1 (since any detection is an incorrect). Furthermore, the stopping time used in our impossibility theorem is a natural stopping time to use in practice: ``the first time to $\eta$ detections''. Hence, any reasonable algorithm which hopes to control the FDR cannot have finite ARL. In \Cref{app:ccdarl}, we show that (similarly to the Type I error) the CCD is related to the ARL.

\subsection{New metric: error over patience for FDR (\texorpdfstring{$\eop_\fdr$}{EOP for FDR})} \label{sec:newmetrics}

\par Since we cannot ask for the Type I error to be small when the ARL is finite, we would like new metrics which somehow simultaneously control the ARL and the Type I error. However, we will show that if we allow the ARL to be infinite, we can obtain universal Type I error control. Hence, we define the following metric, called \emph{error over patience}.

\definition[Error over patience for FDR ($\eop_\fdr$)]{Fix a filtration $\F$ and a sequential change monitoring algorithm $\varphi$, and let $\mathcal{T}$ denote the set of stopping times with respect to $\F$. The \textit{error over patience} for FDR is defined as:
\begin{align*}
    \eop_\fdr(\varphi) &= \sup_{\xi \in \N^K}\, \sup_{\tau \in \mathcal{T}}\, \frac{\fdr(\varphi, \xi, \tau)}{\E[\tau]}.
\end{align*}}
\smallskip

\par We will typically seek to upper bound the EOP at a predefined level $\alpha \in (0,1)$. One can intuitively think of the regime when $\E[\tau]$ is small (when we stop the change monitoring algorithm quickly) as ``low-patience''. Similarly, one can think of the regime when $\E[\tau]$ is large (when we stop the algorithm slowly) as ``high-patience''. When a stopping time is impatient, this metric insists that the Type I error rate must be low; conversely, when a stopping time is patient, the metric relaxes its desired control over Type I error. However, note that any bound on this metric implies a bound which is \textit{uniform} over the choice of stopping time; we say that algorithms which bound this quantity are ``anytime-valid''.

\par Upper bounding the EOP by $\alpha$ implies a lower bound on the ARL of $1/\alpha$. The reader may observe this in two steps: first, instantiate the bound under the global null where $\E$ is $\E_\infty$, and second, consider the specific stopping time $\tau^*_1(\varphi)$. Recalling that $\arl_1(\varphi) = \E_\infty[\tau^*_1(\varphi)]$ yields the claim. This argument also shows that upper bounding the EOP by $\alpha$ is \emph{much stronger} than just ARL control. Indeed, ARL control is only relevant under the global null, while we instead will seek to control the $\eop$ under any configuration of null streams. Further, upper bounding the supremum over stopping times is significantly stronger than considering the specific time $\tau^*_1(\varphi)$.

\par However, it is also apparent that upper bounding the EOP by $\alpha$ is much weaker than controlling the Type I error at level $\alpha$. As has already been proved, the latter condition cannot be guaranteed when the ARL is finite, and so this weakening of the Type I error control appears to be a necessary compromise. Now, we will construct algorithms to control this metric using e-detectors.

\subsection{Upper-bounding \texorpdfstring{$\eop_\fdr$}{the error over patience for FDR} using the e-d-BH procedure}

\par Here, we introduce a new algorithm for multi-stream change monitoring, which we call the e-detector Benjamini-Hochberg (e-d-BH) procedure.

\begin{algorithm}[H]
    \caption{e-detector Benjamini-Hochberg procedure (e-d-BH procedure)}
    \label{alg:edbh}
     \KwIn{$(\alpha_t)_{t=1}^\infty \in (0, 1)^{\N}$, $(M_t^{(k)})_{t=1}^\infty$ (an e-detector for stream $k$ where $k \in [K]$)}
     
     \KwOut{$(\varphi_t^{(k)})_{t=1}^\infty$ for $k \in [K]$, declarations of changes in stream $k$ at time $t$}

    $t \gets 1$
    
     \While{$t \geq 1$}{
     $M_t^{[0]} \gets 0$

     \For{$k \in [K]$}{
     $M_t^{[k]} \gets$ $k$th order statistic of $(M_t^{(i)})_{i=1}^K$, sorted from largest to smallest
     }
     
     $k^*_t \gets \max\left\{ 0 \leq k \leq K :  M_t^{[k]} \geq \frac{K}{k \alpha_t} \right\}$

     $S_t \gets$ stream indices $k$ corresponding to the largest $k^*_t$ values of $M_t^{(k)}$

     $\varphi_t^{(k)} \gets \indicator_{k \in S_t}$
     
     $t \gets t + 1$
     }
\end{algorithm}

\par Note that sometimes, the EOP may be too stringent a metric to control, so instead of using the threshold $1/\alpha$, we can adaptively choose a threshold $c_\alpha$; we explore this idea further in \Cref{app:adaptive}. Now, we explore several guarantees of this algorithm. First, suppose that $\alpha_t = \alpha \in (0, 1)$ is a constant sequence. Then, we have the following guarantee on the performance of \Cref{alg:edbh}, where $\eop_\fdr(\varphi)$ is taken with respect to $\F$.

\begin{theorem}[e-d-BH bounds the error over patience for FDR] \label{thm:edbh-eop-fdr}
    Suppose that for $k \in [K]$, $(M_t^{(k)})_{t=1}^\infty$ is an e-detector with respect to the global filtration $\F$. Then, if $\mathcal{T}$ denotes the set of stopping times with respect to $\F$, \Cref{alg:edbh} satisfies the following guarantee for any constant sequence $\alpha_t = \alpha \in (0, 1)$:
    \begin{align*}
        \sup_{\xi \in \N^K}\, \sup_{\tau \in \mathcal{T}}\, \frac{\fdr(\varphi, \xi, \tau)}{\left( \sum_{k=1}^K \E[\tau \wedge (\xi^{(k)} - 1)] \right) \vee 1} \leq \frac{\alpha}{K}.
    \end{align*}
    In particular, it follows that:
    \begin{align*}
        \eop_\fdr(\varphi) \leq \alpha.
    \end{align*}
\end{theorem}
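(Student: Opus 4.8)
The goal is to bound, uniformly over configurations $\xi$ and stopping times $\tau$, the quantity
$\fdr(\varphi,\xi,\tau)$ divided by the ``generalized patience''
$\left(\sum_{k=1}^K \E[\tau \wedge (\xi^{(k)}-1)]\right) \vee 1$. The natural strategy is to work directly with the FDP and exploit the specific structure of the e-d-BH thresholding rule. My plan is to first upper bound the FDP at a fixed time $t$ in terms of the e-detector values using the standard e-BH algebra, then take expectations and sum over time, and finally invoke the e-detector property to convert the sums of e-detector values into the patience denominator.

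\textbf{Step 1: pointwise FDP bound via the e-BH selection rule.}
At any fixed time $t$, the e-d-BH procedure rejects the streams with the $k^*_t$ largest e-detector values, where $k^*_t = \max\{k : M_t^{[k]} \geq K/(k\alpha_t)\}$. The defining inequality of this rule guarantees that every \emph{rejected} stream $k \in S_t$ has $M_t^{(k)} \geq K/(|S_t|\,\alpha_t)$, equivalently $|S_t| \geq \frac{K}{\alpha_t M_t^{(k)}}$ for each such $k$. Since the FDP numerator counts rejected null streams $\{k \in S_t : \xi^{(k)} > t\}$ and the denominator is $|S_t| \vee 1$, I would bound
\begin{align*}
    \fdp(\varphi,\xi,t)
    = \frac{\sum_{k=1}^K \indicator_{k \in S_t}\, \indicator_{\xi^{(k)} > t}}{|S_t| \vee 1}
    \leq \frac{\alpha_t}{K} \sum_{k : \xi^{(k)} > t} M_t^{(k)}\, \indicator_{k \in S_t}
    \leq \frac{\alpha_t}{K} \sum_{k : \xi^{(k)} > t} M_t^{(k)}.
\end{align*}
This is the change-detection analogue of the e-BH deterministic inequality, where the rejection threshold on each rejected e-value directly pays for the inverse of the rejection count.

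\textbf{Step 2: take expectations, then sum over the stopped trajectory.}
The subtle point is that the theorem measures error at a \emph{stopping time} $\tau$, not a fixed time. I would handle this by writing the FDP at $\tau$ as a sum over fixed times weighted by the event $\{\tau = t\}$, or equivalently by bounding $\fdp(\varphi,\xi,\tau) \leq \frac{\alpha}{K}\sum_{t=1}^\infty M_t^{(k)}$-type expressions after accounting for stopping. Here with $\alpha_t \equiv \alpha$, taking expectations and using that the bound in Step 1 holds pointwise in $t$, the FDR at $\tau$ is controlled by $\frac{\alpha}{K}$ times an expectation of e-detector values accumulated up to $\tau$, restricted to streams that are still null (i.e.\ $\xi^{(k)} > t$, or equivalently $t \leq \xi^{(k)}-1$). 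The restriction $\{\xi^{(k)} > t\}$ is exactly what lets me use the e-detector guarantee for stream $k$, which only holds under the null in stream $k$.

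\textbf{Step 3: invoke the e-detector property to produce the patience denominator.}
For each stream $k$, the e-detector bound $\E_\P[M_\sigma] \leq \E_\P[\sigma]$ applies for any stopping time $\sigma$, provided $\P$ makes stream $k$ null. The key move is to pair $\tau$ with the truncation at the changepoint: the relevant stopping time for stream $k$ is $\tau \wedge (\xi^{(k)}-1)$, before which stream $k$ is genuinely null, so the e-detector property yields $\E[M^{(k)}_{\tau \wedge (\xi^{(k)}-1)}] \leq \E[\tau \wedge (\xi^{(k)}-1)]$. Summing over $k$ converts the accumulated e-detector mass into exactly $\sum_{k=1}^K \E[\tau \wedge (\xi^{(k)}-1)]$, which is the denominator in the theorem. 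Dividing through gives the bound $\alpha/K$, and taking the supremum over $\xi$ and $\tau$ completes the first display. The final claim $\eop_\fdr(\varphi) \leq \alpha$ then follows because $\E[\tau \wedge (\xi^{(k)}-1)] \leq \E[\tau]$ for each $k$, so the patience denominator is at most $K\,\E[\tau]$, turning the $\alpha/K$ bound into $\alpha$ after the $K$ cancels.

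\textbf{Anticipated main obstacle.}
The routine e-BH algebra in Step 1 is standard, so the real difficulty is Step 3: correctly relating the e-detector mass \emph{accumulated along the stopped trajectory} to the stopped e-detector value, and ensuring the truncation $\tau \wedge (\xi^{(k)}-1)$ is genuinely a valid stopping time under the global filtration $\F$ so that the e-detector inequality applies. I would need to be careful that the indicator $\indicator_{\xi^{(k)} > t}$ lines up with the truncation (so that only pre-change contributions are counted), and that no measurability issue arises from $\xi^{(k)}$ being unknown — the truncation is a theoretical device, and the e-detector property must be checked to hold for the relevant $\P$ in which stream $k$ is null up to time $\xi^{(k)}-1$. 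Getting this pairing exactly right, rather than the pointwise-in-$t$ bound, is where the proof must be handled with care.
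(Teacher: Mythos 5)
Your proposal is correct and follows essentially the same route as the paper's proof: the pathwise e-BH inequality $\fdp \leq \frac{\alpha}{K}\sum_{k:\,\xi^{(k)}>\tau} M_\tau^{(k)}$ applied at the stopping time, followed by the pointwise bound $M_\tau^{(k)}\,\indicator_{\xi^{(k)}>\tau} \leq M^{(k)}_{\tau \wedge (\xi^{(k)}-1)}$ and the e-detector property at the truncated stopping time $\tau \wedge (\xi^{(k)}-1)$. The "main obstacle" you flag is handled in the paper exactly as you anticipate — by the two-case argument (on $\{\xi^{(k)}>\tau\}$ the stopped values agree, and otherwise the indicator kills the left side), with $\tau \wedge (\xi^{(k)}-1)$ a stopping time since $\xi^{(k)}$ is a fixed constant.
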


\par Notice that one condition for \Cref{thm:edbh-eop-fdr} is that $M_t^{(k)}$ must be an e-detector for the global filtration $\F$; we discuss the choice of filtration further in \Cref{sec:csdependence}. Further, note that we could have defined patience as $\left( \frac{1}{K} \sum_{k=1}^K \E[\tau \wedge (\xi^{(k)} - 1)] \right) \vee 1$, but we chose to keep the current definition for interpretability.

\begin{proof}
    Fix $\alpha \in (0, 1)$ and any stopping time $\tau$ with respect to $\F$. Now, we can explicitly bound the FDR at time $\tau$ (since $\indicator_{x \geq 1} \leq x$ for $x \geq 0$):
    \begin{align*}
        \fdr(\varphi, \xi, \tau)
        & = \E\left[ \frac{\sum_{k=1}^K \varphi_\tau^{(k)} \, \indicator_{\xi^{(k)} > \tau}}{\left( \sum_{k=1}^K \varphi_\tau^{(k)} \right) \vee 1} \right] \\
        & = \sum_{k=1}^K\, \E\left[ \frac{\indicator_{M_\tau^{(k)} k^*_\tau \alpha / K \geq 1}\, \indicator_{\xi^{(k)} > \tau}}{k^*_\tau \vee 1} \right] \\
        & \leq \frac{\alpha}{K} \sum_{k=1}^K\, \E\left[ M_\tau^{(k)}\, \indicator_{\xi^{(k)} > \tau} \left( \frac{k^*_\tau}{k^*_\tau \vee 1} \right) \right] \\
        & \leq \frac{\alpha}{K} \sum_{k=1}^K \E[ M_\tau^{(k)}\, \indicator_{\xi^{(k)} > \tau}].
    \end{align*}
    Now, considering the events $\{ \xi^{(k)} > \tau \}$ and $\{ \xi^{(k)} \leq \tau \}$, we have the inequality:
    \begin{align*}
        \E[ M_\tau^{(k)}\, \indicator_{\xi^{(k)} > \tau}] 
        \leq \E[ M_{\tau \wedge (\xi^{(k)} - 1)}^{(k)}].
    \end{align*}
    By the definition of the e-detector (because $\tau \wedge (\xi^{(k)} - 1)$ is a stopping time), we obtain:
    \begin{align*}
        \E[ M_{\tau \wedge (\xi^{(k)} - 1)}^{(k)}]
        \leq \E[\tau \wedge (\xi^{(k)} - 1)]
        \leq \E[\tau].
    \end{align*}
    Putting the pieces together, we obtain:
    \begin{align*}
        \fdr(\varphi, \xi, \tau) 
        \leq \frac{\alpha}{K} \sum_{k=1}^K \E[ M_\tau^{(k)}\, \indicator_{\xi^{(k)} > \tau}]
        \leq \frac{\alpha}{K} \sum_{k=1}^K \E[\tau \wedge (\xi^{(k)} - 1)]
        \leq \alpha\, \E[\tau].
    \end{align*}
    Rearranging and taking a supremum over stopping times $\tau$ and configurations $\xi$ gives the desired result.
\end{proof}

\par The proof of \Cref{thm:edbh-eop-fdr} essentially shows the stronger result that \Cref{alg:edbh} satisfies:
\begin{align*}
    \sup_{\xi \in \N^K}\, \sup_{\tau \in \mathcal{T}}\, \left\{ \E\left[ \sup_{\alpha \in (0, 1)}\, \frac{\sum_{k=1}^K \varphi_\tau^{(k)} \, \indicator_{\xi^{(k)} > \tau}}{\alpha \left( \left( \sum_{k=1}^K \varphi_\tau^{(k)} \right) \vee 1 \right)} \right]\, \frac{1}{\left( \sum_{k=1}^K \E[\tau \wedge (\xi^{(k)} - 1)] \right) \vee 1} \right\} \leq \frac{1}{K}.
\end{align*}
Note that the detections $\varphi_\tau^{(k)}$ implicitly depend on $\alpha$. This means that for any post-hoc choice of $\alpha \in (0, 1)$, we retain a nontrivial guarantee.

\par We end with a short but important observation that under the global null, one can show that e-d-BH controls the ARL.

\begin{corollary}
    For any constant sequence $\alpha_t = \alpha \in (0, 1)$, let $\tau^*_1(\varphi)$ denote the first time that that \Cref{alg:edbh} declares a change. Under the global null ($\xi^{(k)} = \infty$ for $k \in [K]$), \Cref{alg:edbh} satisfies:
    \begin{align*}
        \arl_1(\varphi) = \E_\infty[\tau^*_1(\varphi)] \geq \frac{1}{\alpha}.
    \end{align*}
    Furthermore, for any configuration $\xi \in \N^K$, \Cref{alg:edbh} satisfies:
    \begin{align*}
        \arl_1(\varphi^*, \xi)
        = \E[\tau^*_1(\varphi, \xi)]
        \geq \frac{1}{\alpha}.
    \end{align*}
\end{corollary}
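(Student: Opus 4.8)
The plan is to prove the global-null bound first, since it is the heart of the statement and follows immediately by combining the two preceding results, and then to address the general-configuration bound by reducing it to the global null.

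For the global null, I would begin by disposing of the trivial case: if $\arl_1(\varphi) = \E_\infty[\tau^*_1(\varphi)] = \infty$, the claim holds automatically. So assume $\arl_1(\varphi) < \infty$, which forces $\tau^*_1(\varphi) < \infty$ almost surely under $\P_\infty$. Now instantiate the configuration at $\xi_G$ and the stopping time at $\tau = \tau^*_1(\varphi)$. By the computation in the proof of \Cref{thm:eop-fdr} --- every declared change is false under $\xi_G$, and at least one change is declared at $\tau^*_1(\varphi)$ --- the false detection proportion equals $1$ on $\{\tau^*_1(\varphi) < \infty\}$, an event of probability one here, so $\fdr(\varphi, \xi_G, \tau^*_1(\varphi)) = 1$. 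On the other hand, \Cref{thm:edbh-eop-fdr} gives $\eop_\fdr(\varphi) \leq \alpha$, and unpacking this at $\xi_G$ and $\tau^*_1(\varphi)$ yields $\fdr(\varphi, \xi_G, \tau^*_1(\varphi)) \leq \alpha\, \E_\infty[\tau^*_1(\varphi)]$. Combining gives $1 \leq \alpha\, \arl_1(\varphi)$, i.e.\ $\arl_1(\varphi) \geq 1/\alpha$. If one prefers to avoid quoting $\fdr = 1$ directly, the same conclusion follows by applying the refined inequality of \Cref{thm:edbh-eop-fdr} to the bounded stopping times $\tau^*_1(\varphi) \wedge n$ --- for which $\fdp$ is exactly $\indicator_{\tau^*_1(\varphi) \leq n}$ --- and letting $n \to \infty$.

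For a general configuration $\xi$, the natural route is to reduce to the global null. Fixing a data realization, the false-detection count $\sum_k \varphi_t^{(k)} \indicator_{\xi^{(k)} > t}$ is pointwise dominated by the total-detection count $\sum_k \varphi_t^{(k)}$, so deterministically $\tau^*_1(\varphi, \xi) \geq \tau^*_1(\varphi, \xi_G) = \tau^*_1(\varphi)$: intuitively, fewer streams are eligible to generate a false alarm, so the first false alarm cannot come earlier. One would then like to push this through the expectation to obtain $\E[\tau^*_1(\varphi, \xi)] \geq \arl_1(\varphi) \geq 1/\alpha$.

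The hard part is exactly this last step. The pathwise inequality holds under a fixed realization, but $\E[\tau^*_1(\varphi, \xi)]$ is an expectation under $\P_\xi$ whereas $\arl_1(\varphi)$ is an expectation under the different law $\P_\infty$, and a pointwise domination between functionals does not in general transfer to an inequality between their expectations under two distinct measures. The alternative of instantiating the refined bound of \Cref{thm:edbh-eop-fdr} directly at $\tau = \tau^*_1(\varphi, \xi)$ avoids the change of measure but is delicate for a different reason: under a general configuration, true detections occurring simultaneously with the first false detection can dilute the false detection proportion at $\tau^*_1(\varphi, \xi)$ to as little as $1/K$ rather than $1$, so this route appears to cost a factor of $K$. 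This is precisely why the global null is the clean case, and it is the obstacle I would expect to spend the most effort on --- either by establishing a genuine stochastic-dominance reduction to $\xi_G$ that survives the change of measure, or by arguing stream-by-stream that any false rejection forces the corresponding null e-detector above $1/\alpha$ and controlling it via the e-detector inequality $\E[M^{(k)}_{\tau \wedge (\xi^{(k)} - 1)}] \leq \E[\tau \wedge (\xi^{(k)} - 1)] \leq \E[\tau]$.
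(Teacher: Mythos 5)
Your treatment of the global-null display is correct and is exactly the paper's argument: dispose of the trivial case $\arl_1(\varphi) = \infty$, note that under $\xi_G$ every detection is false so $\fdp = 1$ at $\tau^*_1(\varphi)$ on $\{\tau^*_1(\varphi) < \infty\}$, and combine $\fdr(\varphi, \xi_G, \tau^*_1(\varphi)) = 1$ with \Cref{thm:edbh-eop-fdr} to get $1 \leq \alpha\, \E_\infty[\tau^*_1(\varphi)]$.

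For the second display (general $\xi$), your proposal stops short of a proof, but the obstacle you name is real --- and the paper does not get past it either. The paper's entire proof of this part is the sentence ``Recall that $\fdr(\varphi, \xi, \tau^*_1(\varphi, \xi)) = 1$,'' followed by an application of \Cref{thm:edbh-eop-fdr}. That recalled identity is false in general, for precisely the dilution reason you flag: under a configuration with changed streams, e-d-BH can make true detections at or before the first false detection, so at $\tau^*_1(\varphi, \xi)$ the $\fdp$ has numerator at least $1$ but denominator as large as $K$, and all one can assert is $\fdr(\varphi, \xi, \tau^*_1(\varphi, \xi)) \geq 1/K$. Feeding this into \Cref{thm:edbh-eop-fdr} yields only $\E[\tau^*_1(\varphi, \xi)] \geq 1/(K\alpha)$, the factor-$K$ loss you predicted. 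Your two candidate repairs indeed fail for the reasons you state: the pathwise domination $\tau^*_1(\varphi, \xi) \geq \tau^*_1(\varphi, \xi_G)$ compares stopping times whose expectations are taken under different laws (the data distribution depends on $\xi$), and the stream-by-stream route controls each null stream's crossing time separately, so accounting for which (random) null stream triggers the first false alarm costs a union bound over up to $K$ streams.

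Moreover, the factor $K$ does not appear to be an artifact of the proof technique. Sketch: take roughly $K/2$ changed streams whose e-detectors explode immediately after the change (achievable with likelihood-ratio detectors and a drastic change), so that $k^*_t \approx K/2$ and a null stream is falsely rejected by e-d-BH as soon as its detector reaches roughly $K/((K/2)\alpha) \approx 2/\alpha$. A process that jumps to the value $2/\alpha$ with probability $\alpha/2$ per step (independently across time) and is $0$ otherwise satisfies the e-detector inequality $\E[M_\tau] \leq \E[\tau]$, and with $K/2$ independent null streams of this form the first false detection occurs in expected time of order $4/(K\alpha) \ll 1/\alpha$. So the general-configuration claim does not follow from \Cref{thm:edbh-eop-fdr}, the paper's proof of it has a genuine gap, and the claim itself appears to fail as stated; the defensible conclusions are the global-null bound $\arl_1(\varphi) \geq 1/\alpha$ and, for arbitrary $\xi$, the weaker bound $\arl_1(\varphi, \xi) \geq 1/(K\alpha)$.
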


\begin{proof}
    Let $\xi \in \N^K$ be any configuration. If $\P(\tau^*_1(\varphi, \xi) = \infty) > 0$, the second result is immediate since the ARL is infinite; suppose that this is not the case. Recall that $\fdr(\varphi, \xi, \tau^*_1(\varphi, \xi)) = 1$. Then, the second result follows from \Cref{thm:edbh-eop-fdr} by letting $\tau = \tau^*_1(\varphi)$. The first result follows by letting $\xi = \xi_G$.
\end{proof}

\subsection{Universal FDR control using the e-d-BH procedure}

\par Below, we show that the e-d-BH procedure can be used to provide universal control over the FDR if we forfeit ARL control. Here, we let $(\alpha_t)_{t=1}^\infty \in (0, 1)^\N$ be  arbitrary.

\begin{theorem}[e-d-BH gives universal FDR control]\label{thm:univfdr}
    Recalling that $\mathcal{H}_0(t) = \{ k \in [K] : \xi^{(k)} > t \}$, \Cref{alg:edbh} satisfies the following guarantee:
    \begin{align*}
        \sup_{\xi \in \N^K}\, \sup_{t \in \N}\, \frac{\fdr(\varphi, \xi, t)}{t \alpha_t\, \lvert \mathcal{H}_0(t) \rvert} \leq \frac{1}{K}.
    \end{align*}
    In particular, because $\sup_{t \in \N}\, \lvert \mathcal{H}_0(t) \rvert \leq K$, it follows that:
    \begin{align*}
        \sup_{\xi \in \N^K}\, \sup_{t \in \N}\, \frac{\fdr(\varphi, \xi, t)}{t \alpha_t} \leq 1.
    \end{align*}
\end{theorem}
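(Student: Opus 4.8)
The plan is to mirror the proof of \Cref{thm:edbh-eop-fdr}, specializing from a stopping time $\tau$ to a deterministic time $t$ and allowing the sequence $(\alpha_t)$ to be arbitrary. The decisive simplification at a fixed time $t$ is that the indicator $\indicator_{\xi^{(k)} > t}$ is deterministic, which lets me book-keep the null streams exactly and recover the sharper factor $\lvert \mathcal{H}_0(t) \rvert$ in the denominator.

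First I would fix a configuration $\xi \in \N^K$ and a time $t \in \N$ and expand $\fdr(\varphi, \xi, t) = \E[\fdp(\varphi, \xi, t)]$. Writing $k^*_t$ for the e-BH index selected by \Cref{alg:edbh}, I use that $\sum_{k} \varphi_t^{(k)} = k^*_t$ and that every rejected stream clears the e-BH bar, so $\varphi_t^{(k)} \le \indicator_{M_t^{(k)} k^*_t \alpha_t / K \ge 1}$. Substituting into the FDP, applying $\indicator_{x \ge 1} \le x$ with $x = M_t^{(k)} k^*_t \alpha_t / K \ge 0$, and using $k^*_t / (k^*_t \vee 1) \le 1$, I obtain the linearized bound
\begin{align*}
    \fdr(\varphi, \xi, t) \le \frac{\alpha_t}{K} \sum_{k=1}^K \E[M_t^{(k)}\, \indicator_{\xi^{(k)} > t}].
\end{align*}
This is the identical linearization step of \Cref{thm:edbh-eop-fdr}, now with $\tau$ and $\alpha$ replaced by $t$ and $\alpha_t$.

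Next I would bound each summand through the e-detector property. Because $t$ is deterministic, $\indicator_{\xi^{(k)} > t}$ is constant: when $\xi^{(k)} > t$ we have $t \wedge (\xi^{(k)} - 1) = t$, so $M_t^{(k)} \indicator_{\xi^{(k)} > t} = M^{(k)}_{t \wedge (\xi^{(k)} - 1)}$, while when $\xi^{(k)} \le t$ the summand is zero. In both cases,
\begin{align*}
    \E[M_t^{(k)}\, \indicator_{\xi^{(k)} > t}]
    \le \E[M^{(k)}_{t \wedge (\xi^{(k)} - 1)}]
    \le \E[t \wedge (\xi^{(k)} - 1)]
    = t \wedge (\xi^{(k)} - 1)
    \le t,
\end{align*}
where the middle inequality is the e-detector bound applied to the stopping time $t \wedge (\xi^{(k)} - 1)$. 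Retaining the indicator shows the summand is at most $t\, \indicator_{\xi^{(k)} > t}$, and summing over $k$ gives $\sum_k \E[M_t^{(k)} \indicator_{\xi^{(k)} > t}] \le t\, \lvert \mathcal{H}_0(t) \rvert$. Combining with the linearized bound yields $\fdr(\varphi, \xi, t) \le (\alpha_t / K)\, t\, \lvert \mathcal{H}_0(t) \rvert$; rearranging and taking the supremum over $\xi$ and $t$ gives the first claim, and the second follows from $\lvert \mathcal{H}_0(t) \rvert \le K$ (with the degenerate case $\lvert \mathcal{H}_0(t) \rvert = 0$ forcing $\fdr(\varphi, \xi, t) = 0$).

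The one delicate point, exactly as in \Cref{thm:edbh-eop-fdr}, is the legitimacy of the e-detector inequality $\E[M^{(k)}_{t \wedge (\xi^{(k)} - 1)}] \le \E[t \wedge (\xi^{(k)} - 1)]$ when the true configuration has $\xi^{(k)} < \infty$: the resolution is that $t \wedge (\xi^{(k)} - 1)$ never reaches the changepoint, so the stopped e-detector sees only pre-change data in stream $k$ and the pre-change bound applies (formally, by comparison to a companion distribution that extends stream $k$'s pre-change law to all times). Everything else is a routine adaptation of the earlier proof.
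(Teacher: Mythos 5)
Your proposal is correct and follows essentially the same argument as the paper's proof: the same linearization of the FDP via $\varphi_t^{(k)} \leq \indicator_{M_t^{(k)} k^*_t \alpha_t / K \geq 1}$ and $\indicator_{x \geq 1} \leq x$, followed by the e-detector bound $\E[M_t^{(k)}] \leq t$ on the null streams $\mathcal{H}_0(t)$ (your detour through the stopped process $M^{(k)}_{t \wedge (\xi^{(k)} - 1)}$ coincides with the paper's direct application of the e-detector definition, since $t \wedge (\xi^{(k)} - 1) = t$ exactly on null streams). Your closing remark about comparison to a companion distribution with $\xi^{(k)} = \infty$ makes explicit a point the paper leaves implicit, but it does not change the structure of the proof.
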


\begin{proof}
    Fix a sequence $(\alpha_t)_{t=1}^\infty \in (0, 1)^{\N}$ and any fixed time $t \in \N$. Now, we can explicitly bound the FDR at the time $t$:
    \begin{align*}
        \fdr(\varphi, \xi, t)
        & = \E\left[ \frac{\sum_{k=1}^K \varphi_t^{(k)} \, \indicator_{\xi^{(k)} > t}}{\left( \sum_{k=1}^K \varphi_t^{(k)} \right) \vee 1} \right] \\
        & = \E\left[ \frac{\sum_{k \in \mathcal{H}_0(t)} \indicator_{M_t^{(k)} \geq K/(k^*_t \alpha_t)}}{k^*_t \vee 1} \right] \\
        & = \sum_{k \in \mathcal{H}_0(t)} \E\left[ \frac{\indicator_{M_t^{(k)} \geq K/(k^*_t \alpha_t)}}{k^*_t \vee 1} \right].
    \end{align*}
    Now, notice that we always have $M_t^{(k)} (k^*_t \alpha_t) / K \geq \indicator_{M_t^{(k)} \geq K/(k^*_t \alpha_t)}$, since $M_t^{(k)} (k^*_t \alpha_t) / K$ is nonnegative. Hence, we obtain the bound:
    \begin{align*}
        \sum_{k \in \mathcal{H}_0(t)} \E\left[ \frac{\indicator_{M_t^{(k)} \geq K/(k^*_t \alpha_t)}}{k^*_t \vee 1} \right]
        \leq \sum_{k \in \mathcal{H}_0(t)} \E\left[ \frac{M_t^{(k)} (k^*_t \alpha_t)}{(k^*_t \vee 1) \, K} \right]
        \leq \frac{\alpha_t}{K} \sum_{k \in \mathcal{H}_0(t)} \E[M_t^{(k)}].
    \end{align*}
    But by the definition of an e-detector, we have $\E[M_t^{(k)}] \leq t$ for all $k \in \mathcal{H}_0(t)$. Hence, we obtain the final bound:
    \begin{align*}
        \fdr(\varphi, \xi, t)
        \leq \frac{\alpha_t}{K} \sum_{k \in \mathcal{H}_0(t)} \E[M_t^{(k)}]
        \leq \frac{t \alpha_t \lvert \mathcal{H}_0(t) \rvert}{K}.
    \end{align*}
    Rearranging and taking a supremum over $t \in \N$ and $\xi \in \N^K$ gives the desired result.
\end{proof}

\begin{corollary}
    Let $\mathcal{H}_0(t) = \{ k \in [K] : \xi^{(k)} > t \}$. Choosing $\alpha_t = \alpha / t$ for some $\alpha \in (0, 1)$ in \Cref{alg:edbh} gives uniform FDR control for all $t \in \N$:
    \begin{align*}
        \sup_{\xi \in \N^K}\, \sup_{t \in \N}\, \frac{\fdr(\varphi, \xi, t)}{\lvert \mathcal{H}_0(t) \rvert} \leq \frac{\alpha}{K}.
    \end{align*}
    In particular, because $\sup_{t \in \N}\, \lvert \mathcal{H}_0(t) \rvert \leq K$, it follows that:
    \begin{align*}
        \sup_{\xi \in \N^K}\, \sup_{t \in \N}\, \fdr(\varphi, \xi, t) \leq \alpha.
    \end{align*}
\end{corollary}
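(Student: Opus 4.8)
The plan is to obtain this as an immediate specialization of \Cref{thm:univfdr}: the corollary is precisely that theorem evaluated at the particular weight sequence $\alpha_t = \alpha/t$, so essentially all of the work has already been done and what remains is a one-line substitution together with a trivial bound on the number of nulls.

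First I would check that the chosen sequence is admissible, i.e.\ that $(\alpha_t)_{t=1}^\infty \in (0,1)^\N$ as \Cref{thm:univfdr} requires: since $\alpha \in (0,1)$ and $t \geq 1$, we have $0 < \alpha/t \leq \alpha < 1$, so the hypotheses of the theorem are satisfied. Next I would substitute $\alpha_t = \alpha/t$ into the general guarantee
\begin{align*}
    \sup_{\xi \in \N^K}\, \sup_{t \in \N}\, \frac{\fdr(\varphi, \xi, t)}{t \alpha_t\, \lvert \mathcal{H}_0(t) \rvert} \leq \frac{1}{K}.
\end{align*}
The key observation is that the time-dependent factor collapses to a constant, $t\alpha_t = t \cdot (\alpha/t) = \alpha$, independent of $t$. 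Pulling this constant out of the suprema and rearranging yields the first displayed inequality, $\sup_\xi \sup_t \fdr(\varphi,\xi,t)/\lvert\mathcal{H}_0(t)\rvert \leq \alpha/K$. The second inequality then follows from the crude bound $\sup_t \lvert\mathcal{H}_0(t)\rvert \leq K$: multiplying through gives $\fdr(\varphi,\xi,t) \leq (\alpha/K)\lvert\mathcal{H}_0(t)\rvert \leq \alpha$ uniformly over $\xi$ and $t$.

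I do not expect any genuine obstacle here, since the entire content is carried by \Cref{thm:univfdr} and the derivation is a direct substitution. The only technical caveat — which is inherited from the parent theorem rather than introduced by the corollary — concerns configurations and times for which $\lvert\mathcal{H}_0(t)\rvert = 0$, where both the numerator and denominator of the ratio vanish; here one adopts the convention $0/0 = 0$, which is consistent with the fact that the FDR is necessarily zero when there are no true nulls, so the stated bounds remain valid in that degenerate case.
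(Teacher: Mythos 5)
Your proposal is correct and matches the paper's own argument exactly: the corollary is stated in the paper as an immediate consequence of \Cref{thm:univfdr} via the substitution $\alpha_t = \alpha/t$, which collapses $t\alpha_t$ to the constant $\alpha$, with the second display following from $\lvert \mathcal{H}_0(t)\rvert \leq K$. Your added checks (admissibility of the sequence and the $\lvert \mathcal{H}_0(t)\rvert = 0$ degenerate case) are sensible but not points of divergence from the paper's route.
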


\par This follows immediately from \Cref{thm:univfdr} by choosing $\alpha_t = \alpha / t$. Since this algorithm provides universal control over the FDR, $\arl_1(\varphi) = \infty$ is forced due to \Cref{thm:eop-fdr}.

\subsection{Controlling the worst-case FDR when the PFA is bounded}

\par By \Cref{thm:eop-fdr}, we know that if the ARL is finite, then the worst-case FDR (over stopping times $\tau$ and configurations $\xi$) is 1. On the other hand, if we are willing to forfeit finiteness of the ARL, we can hope to control the probability of false alarm (PFA) --- this is the probability that there is any false detection. Without loss of generality, change monitoring algorithms with finite PFA are constructed using e-processes (see \citet{ramdas2020admissible} for details). In particular, we can achieve FDR control using e-d-BH, except that each input process $M^{(k)}$ is an e-process for $\F$ (in addition to being an e-detector) for $k \in [K]$; this means $\E_\P[M_\tau^{(k)}] \leq 1$ for all $\F$-stopping times $\tau$ and for all $\P \in \mathcal{P}_0$. Then, e-d-BH with e-processes controls the FDR uniformly over stopping times and configurations.

\begin{theorem}[e-d-BH with e-processes controls the worst-case FDR]
    Suppose that $M^{(k)}$ is an e-process (for filtration $\F$) for $k \in [K]$ and $\alpha_t = \alpha \in (0, 1)$ is a constant sequence. Then, \Cref{alg:edbh} satisfies $\pfa(\varphi) \leq \alpha$ and $\arl_1(\varphi) = \infty$. Furthermore, recalling that $\mathcal{T}$ denotes the set of stopping times for $\F$, we obtain the following worst-case FDR guarantee:
    \begin{align*}
        \sup_{\xi \in \N^K}\, \sup_{\tau \in \mathcal{T}}\, \fdr(\varphi, \xi, \tau) \leq \alpha.
    \end{align*}
\end{theorem}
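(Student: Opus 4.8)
The theorem makes three claims under the hypothesis that each $M^{(k)}$ is an e-process (hence $\E_\P[M_\tau^{(k)}] \leq 1$ for all stopping times $\tau$ and $\P \in \mathcal{P}_0$): that $\pfa(\varphi) \leq \alpha$, that $\arl_1(\varphi) = \infty$, and that the worst-case FDR is bounded by $\alpha$. I would handle them in that order, since the first two are quick and the third is the substantive one.

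For the PFA bound, I would observe that $\varphi$ declares any change in stream $k$ only when $M_t^{(k)} \geq K/(k^*_t \alpha) \geq 1/\alpha$ (since $k^*_t \leq K$), so a false alarm at time $t$ forces some e-process to exceed $1/\alpha$. Under the global null every stream is null, so $\{\tau^*_1(\varphi) < \infty\} \subseteq \bigcup_k \{\sup_t M_t^{(k)} \geq 1/\alpha\}$; applying Ville's inequality to each e-process and a union bound would naively give $K\alpha$, so instead I would argue more carefully via the e-BH structure — the whole point of the BH thresholding is that the rejection set is calibrated so that a single combined e-process argument (or the fact that a rejection requires $M_t^{[k^*_t]} \geq K/(k^*_t\alpha)$, which bounds the sum $\sum_k M_t^{(k)} \indicator_{k \text{ rejected}}$) yields the clean factor $\alpha$. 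The infiniteness of the ARL is then immediate from \Cref{thm:eop-fdr}: if $\arl_1(\varphi)$ were finite, the worst-case FDR would equal $1$, contradicting the FDR bound we are about to prove (so logically I would prove the FDR bound first and deduce $\arl_1 = \infty$ as a consequence, rather than the reverse).

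The worst-case FDR bound is the heart of the statement, and I expect it to be the main obstacle. I would fix an arbitrary stopping time $\tau \in \mathcal{T}$ and a configuration $\xi$, and replay the algebraic chain from the proof of \Cref{thm:edbh-eop-fdr}: using $\indicator_{x \geq 1} \leq x$ and the explicit BH threshold, I would obtain
\begin{align*}
    \fdr(\varphi, \xi, \tau) \leq \frac{\alpha}{K} \sum_{k=1}^K \E[M_\tau^{(k)}\, \indicator_{\xi^{(k)} > \tau}] \leq \frac{\alpha}{K} \sum_{k=1}^K \E[M_{\tau \wedge (\xi^{(k)}-1)}^{(k)}].
\end{align*}
The crucial difference from the earlier theorem is the final estimate of $\E[M_{\tau \wedge (\xi^{(k)}-1)}^{(k)}]$: there the e-detector property gave the bound $\E[\tau \wedge (\xi^{(k)}-1)]$, which grows with $\tau$ and only yields an EOP-type guarantee. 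Here, because $M^{(k)}$ is an e-process for $\F$ and $\tau \wedge (\xi^{(k)}-1)$ is a stopping time at which stream $k$ is still null, the e-process property gives the much stronger bound $\E[M_{\tau \wedge (\xi^{(k)}-1)}^{(k)}] \leq 1$, uniformly in $\tau$.

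Substituting this into the chain gives $\fdr(\varphi, \xi, \tau) \leq \frac{\alpha}{K} \cdot K = \alpha$, with no dependence on $\tau$ whatsoever, so taking the supremum over stopping times and configurations completes the proof. The key insight I would emphasize is that the e-process bound $\E[M_\tau] \leq 1$ replaces the e-detector bound $\E[M_\tau] \leq \E[\tau]$: it is precisely this stronger (dimensionless) control — available only when we forfeit a finite ARL — that upgrades the EOP guarantee into uniform worst-case FDR control. The only point requiring care is verifying that $\tau \wedge (\xi^{(k)} - 1)$ is indeed a valid $\F$-stopping time at which stream $k$ remains in its pre-change (null) regime, so that the e-process inequality legitimately applies; this is routine since $\xi^{(k)}-1$ is deterministic and the minimum of a stopping time with a constant is a stopping time.
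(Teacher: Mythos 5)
Your treatment of the worst-case FDR bound and of $\arl_1(\varphi)=\infty$ is exactly the paper's argument: replay the chain from \Cref{thm:edbh-eop-fdr}, replace the e-detector bound $\E[M^{(k)}_{\tau\wedge(\xi^{(k)}-1)}]\le\E[\tau\wedge(\xi^{(k)}-1)]$ with the e-process bound $\E[M^{(k)}_{\tau\wedge(\xi^{(k)}-1)}]\le 1$ (uniform in $\tau$), conclude $\fdr(\varphi,\xi,\tau)\le\frac{\alpha}{K}\cdot K=\alpha$, and then invoke \Cref{thm:eop-fdr} to force an infinite ARL --- the paper also proves the FDR bound before deducing the ARL statement. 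Where you genuinely diverge is the PFA claim. The paper replaces each $M^{(k)}_T$ by $\sup_{t}M^{(k)}_t$, treats $1/\sup_t M^{(k)}_t$ as p-values via Ville's inequality, and appeals to the (p-value) BH procedure's FDR control, which equals FWER control under the global null. Your route --- a rejection at time $t$ forces each of the $k^*_t$ rejected streams to satisfy $M_t^{(k)}\ge K/(k^*_t\alpha)$, hence $\sum_{k=1}^K M_t^{(k)}\ge K/\alpha$, after which a single Ville-type argument applies to the combined process --- is different, and it is in fact the argument the paper itself uses for the analogous e-d-GNT worst-case theorem. Your instinct here is arguably sounder than the paper's own proof: BH applied to arbitrarily dependent p-values only guarantees FDR control up to a $\sum_{k=1}^K 1/k$ inflation (and $\sup_t M^{(k)}_t$ is not an e-value, so the e-BH guarantee under arbitrary dependence cannot be invoked directly either), whereas the averaging argument is airtight. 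To finish your sketch you need only two explicit sentences: under the global null each $M^{(k)}$ is an e-process for $\P_\infty$ with respect to $\F$, so $\frac{1}{K}\sum_{k=1}^K M^{(k)}_t$ is again an e-process by linearity of expectation; and any e-process $E$ satisfies $\P_\infty\left(\exists t\in\N : E_t\ge 1/\alpha\right)\le\alpha$ (apply the defining inequality at the first crossing time truncated at $n$, then let $n\to\infty$). This yields $\pfa(\varphi)\le\alpha$ with the clean factor you anticipated, avoiding both the naive $K\alpha$ union bound and the paper's delicate appeal to BH.
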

\begin{proof}
    Suppose that we are in the scenario of the global null, so that the FDR is equivalent to the FWER. At step $T$ of e-d-BH with e-processes, we can replace $M_T^{(k)}$ with $\sup_{t \in \N}\, M_t^{(k)}$, which can only increase the number of false discoveries. But by Ville's inequality, we know that for $k \in [K]$:
    \begin{align*}
        \P_\infty\left( \sup_{t \in \N}\, M_t^{(k)} \geq \frac{1}{\alpha} \right) \leq \alpha.
    \end{align*}
    In particular, since $T$ was arbitrary, the event that there are any false detections is contained in the event that the BH procedure rejects any null hypothesis when instantiated with the p-values $1 / \left( \sup_{t \in \N}\, M_t^{(k)} \right)$. Because the BH procedure controls the FDR (and therefore the FWER) across streams, we deduce that the PFA must be controlled at level $\alpha$.
    \par Then, the worst-case FDR guarantee can be obtained by following the proof of \Cref{thm:edbh-eop-fdr} but replacing the upper bound $\E[\tau]$ with the constant 1 (by the definition of an e-process). Finally, since e-d-BH with e-processes controls FDR uniformly over stopping times and configurations, \Cref{thm:eop-fdr} forces $\arl_1(\varphi) = \infty$.
\end{proof}

Hence, if we allow the ARL to be infinite we obtain an algorithm (e-d-BH with e-processes) which has bounded PFA and which controls the FDR.

\section{The e-d-Bonferroni procedure}

First, as in \Cref{sec:impossibility}, we provide an impossibility result to motivate our new EOP metric for the per-family error rate.

\subsection{Impossibility of controlling worst-case PFER with a finite ARL}

Suppose we are in the setting of a multi-stream change detection problem as described in \Cref{sec:background-cd}. Recall that $\pfer(\varphi, \xi, \tau)$ is the expected per-family error rate of the change monitoring algorithm $\varphi$ if stopped at time $\tau$ with configuration $\xi$, that the global null corresponds to the configuration where no change occurs in any stream ($\xi^{(k)} = \infty$ for all streams $k \in [K]$), and that $\tau^*_\eta(\varphi)$ is the first time that $\eta$ changes are declared.

\begin{theorem}[Relating the PFER and ARL] \label{thm:eop-pfer}
    Fix $1 \leq \eta \leq K$. Consider a multi-stream change monitoring algorithm $\varphi$ for which $\arl_\eta(\varphi) = \E_\infty[\tau^*_\eta(\varphi)] < \infty$. If $\mathcal{T}$ denotes the set of all stopping times with respect to $\F$, then we have:
   \begin{align*}
        \sup_{\xi \in \N^K}\, \sup_{\tau \in \mathcal{T}}\, \pfer(\varphi, \xi, \tau) \geq \pfer(\varphi, \xi_G, \tau^*_\eta(\varphi)) \geq \eta.
   \end{align*}
\end{theorem}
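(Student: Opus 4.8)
The plan is to mirror the structure of the proof of \Cref{thm:eop-fdr}, but the argument is in fact simpler: the per-family error rate \emph{counts} false detections rather than normalizing them, so no Markov-inequality step is required. First I would dispense with the left inequality. Since the left-hand side is a supremum of $\pfer(\varphi, \xi, \tau)$ over all configurations $\xi \in \N^K$ and all stopping times $\tau \in \mathcal{T}$, and the pair $(\xi_G, \tau^*_\eta(\varphi))$ is one admissible choice (recall from \Cref{eq:taustar} that $\tau^*_\eta(\varphi)$ is itself an $\F$-stopping time), the supremum is automatically at least the value attained there.

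For the second inequality I would work entirely under the global null. The hypothesis $\arl_\eta(\varphi) = \E_\infty[\tau^*_\eta(\varphi)] < \infty$ forces $\tau^*_\eta(\varphi) < \infty$ almost surely under $\P_\infty$, since a nonnegative random variable with finite expectation is a.s.\ finite. Under the configuration $\xi_G$ we have $\xi^{(k)} = \infty > t$ for every stream $k$ and every time $t$, so every detection is false and the indicators $\indicator_{\xi^{(k)} > t}$ are identically $1$; consequently the per-family error rate reduces to the expected number of declared changes,
\begin{align*}
    \pfer(\varphi, \xi_G, \tau^*_\eta(\varphi))
    = \E_\infty\left[ \sum_{k=1}^K \varphi_{\tau^*_\eta(\varphi)}^{(k)} \right].
\end{align*}

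The remaining step is to invoke the definition of $\tau^*_\eta(\varphi)$ directly: on the event $\{ \tau^*_\eta(\varphi) < \infty \}$ we have $\sum_{k=1}^K \varphi_{\tau^*_\eta(\varphi)}^{(k)} \geq \eta$, because $\tau^*_\eta(\varphi)$ is precisely the first (hence minimal, over the discrete index set $\N$) time at which the integer-valued count reaches $\eta$. Since this event has $\P_\infty$-probability $1$, taking expectations gives $\pfer(\varphi, \xi_G, \tau^*_\eta(\varphi)) \geq \eta$, closing the chain.

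There is essentially no hard step; the only points deserving care are (i) upgrading finiteness of the ARL to almost-sure finiteness of $\tau^*_\eta(\varphi)$, so that $\varphi_{\tau^*_\eta(\varphi)}^{(k)}$ is well defined $\P_\infty$-a.s.\ and the behavior on the null event $\{\tau^*_\eta(\varphi) = \infty\}$ is irrelevant to the expectation, and (ii) checking that the count $\sum_{k} \varphi_t^{(k)}$ really is at least $\eta$ at the infimal hitting time even though the monitoring algorithm need not be monotone in $t$ --- this is immediate because the infimum over a discrete index set is attained and the defining condition holds there. Unlike the FDR case we obtain only a lower bound rather than an equality, reflecting that the PFER is unbounded above.
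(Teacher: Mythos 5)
Your proposal is correct and takes essentially the same route as the paper: both evaluate the PFER at the hitting time $\tau^*_\eta(\varphi)$ under the global null, note that every detection is false there and that the detection count is at least $\eta$ on the event $\{\tau^*_\eta(\varphi) < \infty\}$, and conclude by taking expectations. The only difference is that you omit the paper's Markov-inequality preamble, which is indeed logically superfluous --- the paper's own concluding line already rests on exactly the almost-sure finiteness of $\tau^*_\eta(\varphi)$ that you derive directly from $\E_\infty[\tau^*_\eta(\varphi)] < \infty$.
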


\begin{proof}
Suppose that we are in the scenario of the global null. Markov's inequality gives:
\begin{align*}
    \P(\tau^*_\eta(\varphi) \geq (1 + \alpha) \, \arl_\eta(\varphi)) \leq \frac{1}{1 + \alpha}.
\end{align*}
So with probability at least $\alpha / (1 + \alpha)$, we have $\tau^*_\eta(\varphi) \leq (1 + \alpha) \, \arl_1(\varphi) < \infty$. Conditioning on this event, we have:
\begin{align*}
    \max_{1 \leq t \leq (1 + \alpha)\, \arl_\eta(\varphi)}\; \sum_{k=1}^K \varphi_t^{(k)}
    \geq \sum_{k=1}^K \varphi_{\tau^*_\eta(\varphi)}^{(k)}
    \geq \eta.
\end{align*}
Then, this forces the following inequality with probability at least $\alpha / (1 + \alpha)$:
\begin{align*}
    \max_{1 \leq t \leq (1 + \alpha)\, \arl_\eta(\varphi)}\; \sum_{k=1}^K \varphi_t^{(k)} \geq \sum_{k=1}^K \varphi_{\tau^*_\eta(\varphi)}^{(k)} \geq \eta.
\end{align*}
Finally, because $\pfer(\varphi, \xi_G, \tau^*_\eta(\varphi)) \geq \eta$ on the event $\{ \tau^*_\eta(\varphi) < \infty \}$, it follows that:
\begin{align*}
    \sup_{\tau \in \mathcal{T}}\, \pfer(\varphi, \xi_G, \tau) \geq \pfer(\varphi, \xi_G, \tau^*_\eta(\varphi)) \geq \eta. & \qedhere
\end{align*}
\end{proof}

\subsection{New metric: error over patience for PFER (\texorpdfstring{$\eop_\pfer$}{EOP for PFER})}

\par Since there is no hope of controlling the PFER uniformly over stopping times $\tau$ and configurations $\xi$ if a finite ARL is desired, we define the following metric, called \emph{error over patience}.

\definition[Error over patience for PFER ($\eop_\pfer$)]{Fix a filtration $\F$ and a sequential change monitoring algorithm $\varphi$, and let $\mathcal{T}$ denote the set of stopping times with respect to $\F$. The \textit{error over patience} for PFER is defined as:
\begin{align*}
    \eop_\pfer(\varphi) &= \sup_{\xi \in \N^K}\, \sup_{\tau \in \mathcal{T}}\, \frac{\pfer(\varphi, \xi, \tau)}{\E[\tau]}.
\end{align*}}
\smallskip

\par We will typically seek to upper bound the EOP for PFER at a predefined level $\beta > 0$. For more discussion and intuition about the EOP metric, see \Cref{sec:newmetrics}. Note that one can easily give an analogous impossibility result and EOP metric for the FWER.

\subsection{New metric: error over patience for FWER (\texorpdfstring{$\eop_\fwer$}{EOP for FWER})}

\par We begin with an analogous impossibility result for the FWER.

\begin{proposition}[Relating the FWER and ARL]
    Consider a multi-stream change monitoring algorithm $\varphi$ for which $\arl_1(\varphi) = \E_\infty[\tau^*_1(\varphi)] < \infty$. If $\mathcal{T}$ denotes the set of all stopping times with respect to $\F$, then we have:
   \begin{align*}
       \sup_{\xi \in \N^K}\, \sup_{\tau \in \mathcal{T}}\, \fwer(\varphi, \xi, \tau) = \fwer(\varphi, \xi_G, \tau^*_1(\varphi)) = 1.
   \end{align*}
\end{proposition}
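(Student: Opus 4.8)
The plan is to mirror the proof of \Cref{thm:eop-fdr}, exploiting the fact that the FWER is already a probability and hence automatically bounded by $1$; this makes the argument even more direct than the FDR case, since there is no false detection proportion to track (in particular, no denominator $\left( \sum_{k=1}^K \varphi_t^{(k)} \right) \vee 1$ to control). There are two inequalities to establish: the trivial upper bound $\sup_{\xi \in \N^K} \sup_{\tau \in \mathcal{T}} \fwer(\varphi, \xi, \tau) \leq 1$, which holds because every FWER is a probability, and a matching lower bound obtained by exhibiting the specific pair $(\xi_G, \tau^*_1(\varphi))$ and showing it already achieves the value $1$.

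For the lower bound, first I would note that $\arl_1(\varphi) = \E_\infty[\tau^*_1(\varphi)] < \infty$ forces $\tau^*_1(\varphi) < \infty$ almost surely under $\P_\infty$, since a nonnegative random variable with finite mean is finite a.s. (alternatively, one can invoke the Markov-inequality device used in the proof of \Cref{thm:eop-fdr}). On the full-measure event $\{ \tau^*_1(\varphi) < \infty \}$, the definition of $\tau^*_1(\varphi)$ in \Cref{eq:taustar} guarantees that $\sum_{k=1}^K \varphi_{\tau^*_1(\varphi)}^{(k)} \geq 1$, so at least one stream $k$ has $\varphi_{\tau^*_1(\varphi)}^{(k)} = 1$. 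Under the global null we have $\xi^{(k)} = \infty > \tau^*_1(\varphi)$ for every stream, so $\indicator_{\xi^{(k)} > \tau^*_1(\varphi)} = 1$ for that flagged stream, placing it inside the union defining the FWER.

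Consequently the event $\bigcup_{k=1}^K \{ \varphi_{\tau^*_1(\varphi)}^{(k)} = 1 \} \cap \{ \xi^{(k)} > \tau^*_1(\varphi) \}$ contains the almost-sure event $\{ \tau^*_1(\varphi) < \infty \}$, whence $\fwer(\varphi, \xi_G, \tau^*_1(\varphi)) = 1$. Since $\tau^*_1(\varphi) \in \mathcal{T}$ and $\xi_G$ is a valid configuration, this pair attains the supremum; combined with the trivial upper bound, I conclude $\sup_{\xi \in \N^K} \sup_{\tau \in \mathcal{T}} \fwer(\varphi, \xi, \tau) = \fwer(\varphi, \xi_G, \tau^*_1(\varphi)) = 1$. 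I do not anticipate a genuine obstacle here: the only point requiring minor care is the passage from finite expectation to almost-sure finiteness of $\tau^*_1(\varphi)$, and everything else is purely definitional.
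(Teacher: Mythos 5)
Your proof is correct, but it takes a genuinely different route from the paper's. The paper dispatches this proposition in one line as a corollary of \Cref{thm:eop-fdr}: since the false detection proportion is positive only when at least one false detection has occurred, one has $\fdr(\varphi, \xi, \tau) \leq \fwer(\varphi, \xi, \tau) \leq 1$ pointwise, so the value $\fdr(\varphi, \xi_G, \tau^*_1(\varphi)) = 1$ established in that theorem sandwiches the FWER at the same pair $(\xi_G, \tau^*_1(\varphi))$ and forces equality. You instead give a direct, self-contained argument, and in doing so you streamline the underlying mechanism: rather than the Markov-inequality device used in the proof of \Cref{thm:eop-fdr} (which only shows $\tau^*_1(\varphi) \leq (1+\alpha)\,\arl_1(\varphi)$ with probability at least $\alpha/(1+\alpha)$ for each $\alpha$), you invoke the simpler fact that a nonnegative random variable with finite expectation is almost surely finite, so that the union event defining the FWER contains the full-measure event $\{\tau^*_1(\varphi) < \infty\}$. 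Both arguments are valid: the paper's buys brevity by reusing \Cref{thm:eop-fdr}, while yours is independent of the FDR machinery and arguably cleaner --- indeed, your ``finite mean implies almost surely finite'' observation would also simplify the paper's own proof of \Cref{thm:eop-fdr}.
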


\par This follows immediately due to \Cref{thm:eop-fdr}. It follows that we cannot hope for FWER control which is uniform over stopping times $\tau$ and configurations $\xi$, as long as a finite ARL is desired. Hence, we motivate our new error over patience metric for FWER.

\definition[Error over patience for FWER ($\eop_\fwer$)]{Fix a filtration $\F$ and a sequential change monitoring algorithm $\varphi$, and let $\mathcal{T}$ denote the set of stopping times with respect to $\F$. The \textit{error over patience} for FWER is defined as:
\begin{align*}
    \eop_\fwer(\varphi) &= \sup_{\xi \in \N^K}\, \sup_{\tau \in \mathcal{T}}\, \frac{\fwer(\varphi, \xi, \tau)}{\E[\tau]}.
\end{align*}}
\smallskip

\par Next, we introduce a change monitoring algorithm which controls the EOP for PFER (and FWER).

\subsection{Upper-bounding \texorpdfstring{$\eop_\pfer$}{the error over patience for PFER} using the e-d-Bonferroni procedure}

\par Here, we introduce a new algorithm for multi-stream change monitoring, which we call the e-d-Bonferroni procedure.

\begin{algorithm}[H]
    \caption{e-detector Bonferroni procedure (e-d-Bonferroni procedure)}
    \label{alg:edbonf}
     \KwIn{$(\beta_t)_{t=1}^\infty \in (0, \infty)^{\N}$, $(M_t^{(k)})_{t=1}^\infty$ (an e-detector for stream $k$ where $k \in [K]$)}
     
     \KwOut{$(\varphi_t^{(k)})_{t=1}^\infty$ for $k \in [K]$, declarations of changes in stream $k$ at time $t$}

    $t \gets 1$
    
     \While{$t \geq 1$}{
     $M_t^{[0]} \gets 0$

     \For{$k \in [K]$}{
     $M_t^{[k]} \gets$ $k$th order statistic of $(M_t^{(i)})_{i=1}^K$, sorted from largest to smallest
     }
     
     $k^*_t \gets \max\left\{ 0 \leq k \leq K : M_t^{[k]} \geq \frac{K}{\alpha_t} \right\}$

     $S_t \gets$ stream indices $k$ corresponding to the largest $k^*_t$ values of $M_t^{(k)}$

     $\varphi_t^{(k)} \gets \indicator_{k \in S_t}$
     
     $t \gets t + 1$
     }
\end{algorithm}

\par Now, we explore several guarantees of this algorithm. We have the following guarantee on the performance of \Cref{alg:edbonf}, where $\eop_\pfer(\varphi)$ is taken with respect to $\F$.

\begin{theorem}[e-d-Bonferroni bounds the error over patience for PFER] \label{thm:edbonf-eop-pfer}
    Suppose that for $k \in [K]$, $(M_t^{(k)})_{t=1}^\infty$ is an e-detector with respect to the global filtration $\F$. Then, if $\mathcal{T}$ denotes the set of stopping times with respect to $\F$, \Cref{alg:edbonf} satisfies the following guarantee for any constant sequence $\beta_t = \beta > 0$:
    \begin{align*}
        \sup_{\xi \in \N^K}\, \sup_{\tau \in \mathcal{T}}\, \frac{\pfer(\varphi, \xi, \tau)}{\left( \sum_{k=1}^K \E[\tau \wedge (\xi^{(k)} - 1)] \right) \vee 1} \leq \frac{\beta}{K}.
    \end{align*}
    In particular, it follows that:
    \begin{align*}
        \eop_\pfer(\varphi) \leq \beta.
    \end{align*}
\end{theorem}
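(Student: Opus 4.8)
The plan is to mirror the proof of \Cref{thm:edbh-eop-fdr}, exploiting the fact that the PFER carries no normalizing denominator, which actually makes the argument \emph{cleaner} than in the FDR case. First I would unpack the rejection rule produced by \Cref{alg:edbonf}. Since the order statistics $M_\tau^{[1]} \geq \dots \geq M_\tau^{[K]}$ are nonincreasing and the threshold $K/\beta$ is now \emph{constant} in $k$ (in contrast to the $k$-dependent threshold $K/(k \alpha_t)$ used by e-d-BH), the cutoff $k^*_\tau$ is exactly the number of streams whose e-detector exceeds $K/\beta$. Consequently $S_\tau = \{ k : M_\tau^{(k)} \geq K/\beta \}$, so the monitoring decision is simply $\varphi_\tau^{(k)} = \indicator_{M_\tau^{(k)} \geq K/\beta}$.

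With this characterization in hand, I would expand the PFER at an arbitrary stopping time $\tau$ and configuration $\xi$ by linearity of expectation, writing
\begin{align*}
    \pfer(\varphi, \xi, \tau) = \sum_{k=1}^K \E\left[ \indicator_{M_\tau^{(k)} \geq K/\beta}\, \indicator_{\xi^{(k)} > \tau} \right].
\end{align*}
The elementary inequality $\indicator_{x \geq 1} \leq x$ for $x \geq 0$ then lets me replace each indicator by $M_\tau^{(k)} \beta / K$, yielding
\begin{align*}
    \pfer(\varphi, \xi, \tau) \leq \frac{\beta}{K} \sum_{k=1}^K \E\left[ M_\tau^{(k)}\, \indicator_{\xi^{(k)} > \tau} \right].
\end{align*}
Here I would emphasize the structural contrast with the FDR proof: there the factor $k^*_\tau$ coming from the threshold $K/(k^*_\tau \alpha)$ had to be cancelled against the denominator $k^*_\tau \vee 1$, whereas the constant Bonferroni threshold delivers the clean factor $\beta/K$ directly, with no such bookkeeping.

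The remainder proceeds exactly as before. On the event $\{ \xi^{(k)} > \tau \}$ we have $\tau \wedge (\xi^{(k)} - 1) = \tau$, so $\E[M_\tau^{(k)}\, \indicator_{\xi^{(k)} > \tau}] \leq \E[M_{\tau \wedge (\xi^{(k)} - 1)}^{(k)}]$; and since $\tau \wedge (\xi^{(k)} - 1)$ is a stopping time at which stream $k$ is still pre-change, the e-detector property gives $\E[M_{\tau \wedge (\xi^{(k)} - 1)}^{(k)}] \leq \E[\tau \wedge (\xi^{(k)} - 1)] \leq \E[\tau]$. Chaining these bounds produces
\begin{align*}
    \pfer(\varphi, \xi, \tau) \leq \frac{\beta}{K} \sum_{k=1}^K \E[\tau \wedge (\xi^{(k)} - 1)] \leq \beta\, \E[\tau],
\end{align*}
and rearranging, then taking suprema over $\tau \in \mathcal{T}$ and $\xi \in \N^K$, delivers both displayed claims. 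I do not expect a genuine obstacle: the only step requiring care is the rejection-rule characterization, in particular verifying that the constant threshold collapses the $k^*_\tau$ dependence that complicated the FDR analysis, after which the e-detector bound applies verbatim.
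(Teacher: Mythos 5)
Your proof is correct and takes essentially the same route as the paper's: bound the false detections by $\indicator_{M_\tau^{(k)} \geq K/\beta}$, apply $\indicator_{x \geq 1} \leq x$ to get the factor $\beta/K$, pass to the stopped process $M^{(k)}_{\tau \wedge (\xi^{(k)}-1)}$ on the event $\{\xi^{(k)} > \tau\}$, and invoke the e-detector property at the stopping time $\tau \wedge (\xi^{(k)} - 1)$ before taking suprema. Your additional observation that the constant threshold makes the rejection rule exactly $\varphi_\tau^{(k)} = \indicator_{M_\tau^{(k)} \geq K/\beta}$ is a correct (and slightly sharper) version of the one-sided bound the paper uses implicitly, but it does not change the argument.
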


\par Notice that one condition for \Cref{thm:edbonf-eop-pfer} is that $M_t^{(k)}$ must be an e-detector for the global filtration $\F$; we discuss the choice of filtration further in \Cref{sec:csdependence}. Further, note that we could have defined patience as $\left( \frac{1}{K} \sum_{k=1}^K \E[\tau \wedge (\xi^{(k)} - 1)] \right) \vee 1$, but we chose to keep the current definition for interpretability.

\begin{proof}
    Fix $\beta > 0$ and any stopping time $\tau$ with respect to $\F$. Now, we can explicitly bound the PFER at time $\tau$ (since $\indicator_{x \geq 1} \leq x$ for $x \geq 0$):
    \begin{align*}
        \pfer(\varphi, \xi, \tau)
        & = \E\left[ \sum_{k=1}^K \varphi_\tau^{(k)} \, \indicator_{\xi^{(k)} > \tau} \right] \\
        & \leq \sum_{k=1}^K \E[\indicator_{M_\tau^{(k)} \geq K / \beta}\, \indicator_{\xi^{(k)} > \tau}] \\
        & \leq \frac{\beta}{K} \sum_{k=1}^K \E[M_\tau^{(k)}\, \indicator_{\xi^{(k)} > \tau}].
    \end{align*}
    Now, considering the events $\{ \xi^{(k)} > \tau \}$ and $\{ \xi^{(k)} \leq \tau \}$, we have the inequality:
    \begin{align*}
        \E[ M_\tau^{(k)}\, \indicator_{\xi^{(k)} > \tau}] 
        \leq \E[ M_{\tau \wedge (\xi^{(k)} - 1)}^{(k)}].
    \end{align*}
    By the definition of the e-detector (because $\tau \wedge (\xi^{(k)} - 1)$ is a stopping time), we obtain:
    \begin{align*}
        \E[ M_{\tau \wedge (\xi^{(k)} - 1)}^{(k)}]
        \leq \E[\tau \wedge (\xi^{(k)} - 1)]
        \leq \E[\tau].
    \end{align*}
    Putting the pieces together, we have the following inequality:
    \begin{align*}
        \pfer(\varphi, \xi, \tau) 
        \leq \frac{\beta}{K} \sum_{k=1}^K \E[ M_\tau^{(k)}\, \indicator_{\xi^{(k)} > \tau}]
        \leq \frac{\beta}{K} \sum_{k=1}^K \E[\tau \wedge (\xi^{(k)} - 1)]
        \leq \beta\, \E[\tau].
    \end{align*}
    Rearranging and taking a supremum over stopping times $\tau$ and configurations $\xi$ gives the desired result.
\end{proof}
In fact, the proof of \Cref{thm:edbonf-eop-pfer} essentially shows the stronger result that \Cref{alg:edbonf} satisfies:
\begin{align*}
    \sup_{\xi \in \N^K}\, \sup_{\tau \in \mathcal{T}}\, \left\{ \E\left[ \sup_{\beta > 0}\, \frac{1}{\beta} \sum_{k=1}^K \varphi_\tau^{(k)} \, \indicator_{\xi^{(k)} > \tau} \right]\, \frac{1}{\left( \sum_{k=1}^K \E[\tau \wedge (\xi^{(k)} - 1)] \right) \vee 1} \right\} \leq \frac{1}{K}.
\end{align*}
Note that the detections $\varphi_\tau^{(k)}$ implicitly depend on $\beta$. This means that for any post-hoc choice of $\beta >  0$, we retain some reasonable guarantee in the form of \Cref{thm:edbonf-eop-pfer}. As previously established in the FDR case, under the global null, one can prove that e-d-Bonferroni controls the ARL.

\begin{corollary}
    \label{cor:edbonf-arl}
    For any constant sequence $\beta_t = \beta \in (0, \infty)$, let $\tau^*_\eta(\varphi)$ denote the first time that we declare a change in at least $\eta$ streams. Under the global null ($\xi^{(k)} = \infty$ for $k \in [K]$), \Cref{alg:edbonf} satisfies (for all $\beta > 0$):
    \begin{align*}
        \arl_\eta = \E_\infty[\tau^*_\eta(\varphi)] \geq \frac{\eta}{\beta}.
    \end{align*}
    Furthermore, for any configuration $\xi \in \N^K$, \Cref{alg:edbonf} satisfies:
    \begin{align*}
        \arl_1(\varphi^*, \xi)
        = \E[\tau^*_1(\varphi, \xi)]
        \geq \frac{1}{\alpha}.
    \end{align*}
\end{corollary}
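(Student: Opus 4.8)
The plan is to follow the template of the corollary already proved for the e-d-BH procedure, with the role of the FDR identity ``$\fdr = 1$ at the first false declaration'' now played by a lower bound on the PFER. The one substantive observation is that the first-passage stopping time to $\eta$ false detections automatically forces $\pfer \geq \eta$ at that time; everything else is a direct application of \Cref{thm:edbonf-eop-pfer}.

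First I would fix a configuration $\xi \in \N^K$ and dispatch the degenerate case: if $\P(\tau^*_\eta(\varphi, \xi) = \infty) > 0$ then $\E[\tau^*_\eta(\varphi, \xi)] = \infty$ and the claimed lower bound is immediate. So I may assume $\tau^*_\eta(\varphi, \xi) < \infty$ almost surely, which makes $\varphi^{(k)}_{\tau^*_\eta}$ well defined. Next I would establish the key inequality $\pfer(\varphi, \xi, \tau^*_\eta(\varphi, \xi)) \geq \eta$: by the definition of $\tau^*_\eta(\varphi, \xi)$ in \Cref{eq:taustarconf}, on the event $\{\tau^*_\eta(\varphi, \xi) < \infty\}$ we have $\sum_{k=1}^K \varphi^{(k)}_{\tau^*_\eta}\, \indicator_{\xi^{(k)} > \tau^*_\eta} \geq \eta$, and since the PFER is precisely the expectation of this sum, taking expectations yields the claim.

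Finally I would apply the bound $\eop_\pfer(\varphi) \leq \beta$ from \Cref{thm:edbonf-eop-pfer} to the specific $\F$-stopping time $\tau = \tau^*_\eta(\varphi, \xi)$, obtaining $\eta \leq \pfer(\varphi, \xi, \tau^*_\eta) \leq \beta\, \E[\tau^*_\eta(\varphi, \xi)]$ and hence $\E[\tau^*_\eta(\varphi, \xi)] \geq \eta/\beta$. Specializing to $\xi = \xi_G$, where $\tau^*_\eta(\varphi) = \tau^*_\eta(\varphi, \xi_G)$ and every declaration is false, gives the first claim $\arl_\eta(\varphi) = \E_\infty[\tau^*_\eta(\varphi)] \geq \eta/\beta$; specializing to $\eta = 1$ with arbitrary $\xi$ gives the second claim, which should read $\E[\tau^*_1(\varphi, \xi)] \geq 1/\beta$ (the stated $1/\alpha$ appears to be a typo, since this procedure is parameterized by $\beta$ rather than $\alpha$).

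I do not expect any real obstacle here: the argument is routine, and the only points warranting mild care are the finiteness caveat above and the remark that the $\vee\, 1$ appearing in the denominator of \Cref{thm:edbonf-eop-pfer} is never binding, because $\tau \geq 1$ already forces $\sum_{k=1}^K \E[\tau \wedge (\xi^{(k)} - 1)] \geq K \geq 1$. Consequently the clean consequence $\eop_\pfer(\varphi) \leq \beta$ may be invoked directly, exactly as in the e-d-BH case.
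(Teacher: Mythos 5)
Your proposal is correct and follows essentially the same route as the paper's own proof: dispatch the case $\P(\tau^*_\eta(\varphi,\xi) = \infty) > 0$, note that $\pfer(\varphi, \xi, \tau^*_\eta(\varphi,\xi)) \geq \eta$ by definition of the first-passage time, apply the EOP bound of \Cref{thm:edbonf-eop-pfer} at $\tau = \tau^*_\eta(\varphi,\xi)$, and specialize to $\xi = \xi_G$ and $\eta = 1$. You are also right that the stated $1/\alpha$ in the second claim is a typo for $1/\beta$ (indeed the paper's own proof contains a parallel slip, citing the e-d-BH theorem where the e-d-Bonferroni theorem \Cref{thm:edbonf-eop-pfer} is meant).
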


\begin{proof}
    Let $\xi \in \N^K$ be any configuration. If $\P(\tau^*_\eta(\varphi, \xi) = \infty) > 0$, the second result is immediate since the ARL is infinite; suppose that this is not the case. Recall that $\pfer(\varphi, \xi, \tau^*_\eta(\varphi)) \geq \eta$. Then, the second result follows from \Cref{thm:edbh-eop-fdr} by letting $\tau = \tau^*_\eta(\varphi, \xi)$. The first result follows by letting $\xi = \xi_G$.
\end{proof}

\begin{corollary}[e-d-Bonferroni bounds the error over patience for FWER] \label{cor:edbonf-eop-fwer}
    Under the same setting as \Cref{thm:edbonf-eop-pfer}, \Cref{alg:edbonf} satisfies the following formal guarantee for any constant sequence $\beta_t = \beta \in (0, 1)$:
    \begin{align*}
        \sup_{\xi \in \N^K}\, \sup_{\tau \in \mathcal{T}}\, \frac{\fwer(\varphi, \xi, \tau)}{\left( \sum_{k=1}^K \E[\tau \wedge (\xi^{(k)} - 1)] \right) \vee 1} \leq \frac{\beta}{K}.
    \end{align*}
    In particular, it follows that:
    \begin{align*}
        \eop_\fwer(\varphi) \leq \beta.
    \end{align*}
\end{corollary}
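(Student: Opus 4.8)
The plan is to reduce this corollary directly to \Cref{thm:edbonf-eop-pfer} by exploiting the elementary fact that the FWER is always dominated by the PFER. Concretely, I would first establish that for every configuration $\xi$ and every stopping time $\tau$, we have $\fwer(\varphi, \xi, \tau) \leq \pfer(\varphi, \xi, \tau)$, and then invoke the PFER guarantee already proved for \Cref{alg:edbonf}.

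The key domination step is a pointwise bound on indicators. The event defining the FWER at time $\tau$ is the union $\bigcup_{k=1}^K \{\varphi_\tau^{(k)} = 1\} \cap \{\xi^{(k)} > \tau\}$, and the indicator of any union is bounded above by the sum of the indicators of its constituent events:
\begin{align*}
    \indicator_{\bigcup_{k=1}^K \{\varphi_\tau^{(k)} = 1\} \cap \{\xi^{(k)} > \tau\}}
    \leq \sum_{k=1}^K \varphi_\tau^{(k)}\, \indicator_{\xi^{(k)} > \tau}.
\end{align*}
Taking expectations of both sides yields $\fwer(\varphi, \xi, \tau) \leq \pfer(\varphi, \xi, \tau)$ immediately. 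Dividing through by the patience denominator $\left( \sum_{k=1}^K \E[\tau \wedge (\xi^{(k)} - 1)] \right) \vee 1$ and applying \Cref{thm:edbonf-eop-pfer} then gives the refined first inequality in the statement, uniformly over $\xi$ and $\tau$.

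For the second claim, $\eop_\fwer(\varphi) \leq \beta$, I would follow the same rearrangement used at the end of the proof of \Cref{thm:edbonf-eop-pfer}. Since each $\tau \wedge (\xi^{(k)} - 1) \leq \tau$ and every stopping time satisfies $\tau \geq 1$ (so $\E[\tau] \geq 1$), we have $\left( \sum_{k=1}^K \E[\tau \wedge (\xi^{(k)} - 1)] \right) \vee 1 \leq K\, \E[\tau]$; combining this with the refined bound gives $\fwer(\varphi, \xi, \tau) \leq \beta\, \E[\tau]$, and taking a supremum over $\xi$ and $\tau$ completes the argument. There is essentially no obstacle here --- the entire content is the indicator inequality, and the corollary is a direct consequence of the already-established PFER result.
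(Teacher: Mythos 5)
Your proposal is correct and is exactly the paper's argument: the paper's proof is the one-line observation that the PFER dominates the FWER, so the bound follows immediately from \Cref{thm:edbonf-eop-pfer}. Your write-up merely makes explicit the indicator/union-bound step and the rearrangement yielding $\eop_\fwer(\varphi) \leq \beta$, both of which are sound.
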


\par This immediately follows from \Cref{thm:edbonf-eop-pfer}, since the PFER bounds the FWER from above. Next, we will show that e-d-Bonferroni can be used to obtain uniform control over the PFER.

\subsection{Universal PFER control using the e-d-Bonferroni procedure}

\par In this section, we will show that the e-d-Bonferroni procedure can be used to provide universal control over the PFER if we forfeit ARL control. Here, we let $(\beta_t)_{t=1}^\infty \in (0, \infty)^\N$ be an arbitrary sequence.

\begin{theorem}[e-d-Bonferroni gives universal PFER control]\label{thm:univpfer}
    First, recall that $\mathcal{H}_0(t) = \{ k \in [K] : \xi^{(k)} > t \}$. Then, \Cref{alg:edbonf} satisfies the following formal guarantee:
    \begin{align*}
        \sup_{\xi \in \N^K}\, \sup_{t \in \N}\, \frac{\pfer(\varphi, \xi, t)}{t \beta_t\, \lvert (\mathcal{H}_0(t) \rvert \vee 1)} \leq \frac{1}{K}.
    \end{align*}
    In particular, since $\sup_{t \in \N}\, \lvert \mathcal{H}_0(t) \rvert \leq K$, it follows that:
    \begin{align*}
        \sup_{\xi \in \N^K}\, \sup_{t \in \N}\, \frac{\pfer(\varphi, \xi, t)}{t \beta_t} \leq 1.
    \end{align*}
\end{theorem}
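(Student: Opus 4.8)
The plan is to mirror the proof of \Cref{thm:univfdr} (universal FDR control for e-d-BH), taking advantage of the fact that the Bonferroni threshold is even simpler than the BH threshold because it does not depend on the number of rejections. First I would fix an arbitrary sequence $(\beta_t)_{t=1}^\infty$ and a deterministic time $t \in \N$. The key structural observation is that in \Cref{alg:edbonf} a stream is declared changed at time $t$ precisely when its e-detector crosses the \emph{fixed} threshold $K/\beta_t$; that is, $\varphi_t^{(k)} = \indicator_{M_t^{(k)} \geq K/\beta_t}$, since the largest $k^*_t$ order statistics are exactly those values meeting the threshold. Because $\mathcal{H}_0(t) = \{k : \xi^{(k)} > t\}$ is a deterministic set once $\xi$ and $t$ are fixed, this lets me expand the PFER as
\[
\pfer(\varphi, \xi, t) = \E\!\left[ \sum_{k=1}^K \varphi_t^{(k)}\, \indicator_{\xi^{(k)} > t} \right] = \sum_{k \in \mathcal{H}_0(t)} \E\!\left[ \indicator_{M_t^{(k)} \geq K/\beta_t} \right].
\]

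Next I would apply the Markov-type bound $\indicator_{M_t^{(k)} \geq K/\beta_t} = \indicator_{M_t^{(k)} \beta_t / K \geq 1} \leq M_t^{(k)} \beta_t / K$, valid since $M_t^{(k)} \beta_t / K \geq 0$, to obtain $\pfer(\varphi, \xi, t) \leq \frac{\beta_t}{K} \sum_{k \in \mathcal{H}_0(t)} \E[M_t^{(k)}]$. The final ingredient is the e-detector bound $\E[M_t^{(k)}] \leq t$ for each $k \in \mathcal{H}_0(t)$, which combines with the previous step to give $\pfer(\varphi, \xi, t) \leq t \beta_t\, \lvert \mathcal{H}_0(t) \rvert / K$. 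Rearranging and taking suprema over $t \in \N$ and $\xi \in \N^K$ yields the first displayed guarantee; the $\vee 1$ in the denominator simply absorbs the degenerate case $\lvert \mathcal{H}_0(t) \rvert = 0$, where the numerator vanishes and the inequality holds trivially. The second display then follows immediately since $\lvert \mathcal{H}_0(t) \rvert \leq K$ for all $t$. This entire chain parallels the EOP bound of \Cref{thm:edbonf-eop-pfer}, with the role of $\E[\tau]$ replaced by the fixed time $t$.

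The step I expect to require the most care is justifying $\E[M_t^{(k)}] \leq t$ for $k \in \mathcal{H}_0(t)$, exactly as in \Cref{thm:univfdr}. The e-detector property is stated for global distributions with $\xi^{(k)} = \infty$, whereas here we only know $\xi^{(k)} > t$ (possibly finite). The resolution is that $M_t^{(k)}$ is $\F_t$-measurable and thus depends only on the first $t$ observations, which for $k \in \mathcal{H}_0(t)$ are entirely pre-change in stream $k$. I would therefore invoke the e-detector inequality at the deterministic stopping time $t \wedge (\xi^{(k)} - 1)$, which equals $t$ precisely because $\xi^{(k)} > t$ pins it before the change in stream $k$; then $M_t^{(k)}$ only ever sees pre-change data and the bound $\E[M_t^{(k)}] = \E[M_{t \wedge (\xi^{(k)} - 1)}^{(k)}] \leq \E[t \wedge (\xi^{(k)} - 1)] = t$ applies. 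Everything else is routine bookkeeping that follows the template already established for e-d-BH.
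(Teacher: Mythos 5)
Your proposal is correct and follows essentially the same route as the paper's proof: restrict the PFER sum to $\mathcal{H}_0(t)$, bound the rejection indicator by $\beta_t M_t^{(k)}/K$ (Markov), invoke $\E[M_t^{(k)}] \leq t$ from the e-detector property, and take suprema. Your extra care in justifying $\E[M_t^{(k)}] \leq t$ for streams with finite $\xi^{(k)} > t$ (via $t \wedge (\xi^{(k)}-1) = t$) is a point the paper glosses over in this proof but handles identically in its EOP theorems, so it is a welcome clarification rather than a divergence.
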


\begin{proof}
    Fix a sequence $(\beta_t)_{t=1}^\infty \in (0, \infty)^\N$ and any fixed time $t \in \N$. Recall that $\mathcal{H}_0(t) = \{ k \in [K] : \xi^{(k)} > t \}$. Now, we can explicitly bound the PFER at time $t$:
    \begin{align*}
        \pfer(\varphi, \xi, t)
        & = \E\left[ \sum_{k=1}^K \varphi_t^{(k)} \, \indicator_{\xi^{(k)} > t} \right] \\
        & = \E\left[ \sum_{k \in \mathcal{H}_0(t)} \varphi_t^{(k)} \right] \\
        & \leq \sum_{k \in \mathcal{H}_0(t)} \P\left( M_t^{(k)} \geq \frac{K}{\beta_t} \right).
    \end{align*}
    Using Markov's inequality, we obtain:
    \begin{align*}
        \sum_{k \in \mathcal{H}_0} \P\left( M_t^{(k)} \geq \frac{K}{\beta_t} \right)
        \leq \sum_{k \in \mathcal{H}_0} \frac{\beta_t\, \E[M_t^{(k)}]}{K}.
    \end{align*}
    But by the definition of an e-detector, we have $\E[M_t^{(k)}] \leq t$ for all $k \in \mathcal{H}_0$. Hence, we obtain the final bound:
    \begin{align*}
        \pfer(\varphi, \xi, t)
        \leq \sum_{k \in \mathcal{H}_0(t)} \frac{\beta_t\, \E[M_t^{(k)}]}{K}
        \leq \frac{t \beta_t\, \lvert \mathcal{H}_0(t) \rvert}{K}.
    \end{align*}
    Rearranging and taking a supremum over $t \in \N$ and $\xi \in \N^K$ gives the desired result.
\end{proof}

\begin{corollary}
    Let $\mathcal{H}_0(t) = \{ k \in [K] : \xi^{(k)} > t \}$. Choosing $\beta_t = \beta / t$ for some $\beta > 0$ in \Cref{alg:edbonf} gives uniform PFER control for all $t \in \N$:
    \begin{align*}
        \sup_{\xi \in \N^K}\, \sup_{t \in \N}\, \frac{\pfer(\varphi, \xi, t)}{\lvert \mathcal{H}_0(t) \rvert} \leq \frac{\beta}{K}.
    \end{align*}
\end{corollary}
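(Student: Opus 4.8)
The plan is to derive this as a direct specialization of \Cref{thm:univpfer}. That theorem already establishes, for an \emph{arbitrary} sequence $(\beta_t)_{t=1}^\infty \in (0, \infty)^\N$, the bound
\begin{align*}
    \sup_{\xi \in \N^K}\, \sup_{t \in \N}\, \frac{\pfer(\varphi, \xi, t)}{t \beta_t\, (\lvert \mathcal{H}_0(t) \rvert \vee 1)} \leq \frac{1}{K},
\end{align*}
so the first step is simply to substitute the schedule $\beta_t = \beta / t$. The key observation is that this choice makes the product $t \beta_t = \beta$ constant in $t$; intuitively, the factor $1/t$ in $\beta_t$ is exactly what is needed to counteract the linear growth $\E[M_t^{(k)}] \leq t$ of the e-detector bound used inside the proof of \Cref{thm:univpfer}, converting the $t$-dependent (time-decaying) guarantee into one that is flat in $t$.

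After substitution the denominator becomes $\beta\,(\lvert \mathcal{H}_0(t) \rvert \vee 1)$, and moving the constant $\beta$ to the right-hand side yields
\begin{align*}
    \sup_{\xi \in \N^K}\, \sup_{t \in \N}\, \frac{\pfer(\varphi, \xi, t)}{\lvert \mathcal{H}_0(t) \rvert \vee 1} \leq \frac{\beta}{K}.
\end{align*}
The only point requiring a word of care is the mismatch between the $\lvert \mathcal{H}_0(t) \rvert \vee 1$ appearing here and the bare $\lvert \mathcal{H}_0(t) \rvert$ in the statement of the corollary. I would dispatch this by splitting on whether there are any null streams at time $t$: when $\lvert \mathcal{H}_0(t) \rvert \geq 1$ the two denominators coincide, so the displayed inequality is already the claim; when $\lvert \mathcal{H}_0(t) \rvert = 0$ there are no null streams and hence no possible false detections, so $\pfer(\varphi, \xi, t) = 0$ and the asserted bound holds trivially.

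I do not anticipate any genuine obstacle here, since all of the probabilistic content (the Markov step on $\{M_t^{(k)} \geq K/\beta_t\}$ and the e-detector bound $\E[M_t^{(k)}] \leq t$) is already carried by \Cref{thm:univpfer}; the corollary is purely the algebraic observation that the schedule $\beta_t = \beta/t$ is the one that balances the threshold against the e-detector growth. This mirrors exactly the passage from \Cref{thm:univfdr} to its corollary in the FDR case, with $\beta$ playing the role of $\alpha$.
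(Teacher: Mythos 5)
Your proposal is correct and is exactly the paper's argument: the paper proves this corollary by simply substituting $\beta_t = \beta/t$ into \Cref{thm:univpfer}, just as you do. Your additional case-split handling the $\lvert \mathcal{H}_0(t) \rvert \vee 1$ versus $\lvert \mathcal{H}_0(t) \rvert$ discrepancy (noting $\pfer(\varphi,\xi,t)=0$ when there are no null streams) is a detail the paper glosses over, and it is handled correctly.
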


\par The proof follows directly from \Cref{thm:univpfer} by choosing $\beta_t = \beta / t$. Since this algorithm provides universal control over the PFER, $\arl_1(\varphi) = \infty$ is forced due to \Cref{thm:eop-pfer}. Although e-d-Bonferroni can also provide universal FWER control, we now derive a modified version of the Holm procedure to obtain a less conservative bound on FWER.

\subsection{Controlling the worst-case PFER with a bounded PFA}

\par By \Cref{thm:eop-pfer}, we know that if the ARL is finite, then the worst-case $\pfer_\eta$ (over stopping times $\tau$ and configurations $\xi$) is at least $\eta$. On the other hand, if we are willing to forfeit finiteness of the ARL, we can only hope to control the probability of false alarm (PFA) -- this is the probability that there is any false detection. Without loss of generality, change monitoring algorithms with finite PFA are constructed using e-processes (see \citet{ramdas2020admissible} for details). In particular, we can achieve PFER control using e-d-Bonferroni, except that each input process $M^{(k)}$ is an e-process for $\F$ (in addition to being an e-detector) for $k \in [K]$; recall that this means $\E_\P[M_\tau^{(k)}] \leq 1$ for all $\F$-stopping times $\tau$ and for all $\P \in \mathcal{P}_0$. Then, e-d-Bonferroni with e-processes controls the PFER uniformly over stopping times and configurations.

\begin{theorem}[e-d-Bonferroni with e-processes controls the worst-case PFER]
    Suppose that $M^{(k)}$ is an e-process for $k \in [K]$ and $\beta_t = \beta > 0$ is a constant sequence. Then, \Cref{alg:edbonf} satisfies $\pfa(\varphi) \leq \beta$ and $\arl_1(\varphi) = \infty$. Furthermore, let $\mathcal{T}$ denote the set of stopping times for $\F$. Then, we obtain the following worst-case PFER guarantee:
    \begin{align*}
        \sup_{\xi \in \N^K}\, \sup_{\tau \in \mathcal{T}}\, \pfer(\varphi, \xi, \tau) \leq \beta.
    \end{align*}
\end{theorem}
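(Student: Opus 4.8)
The plan is to prove the three claims in turn, reusing the estimates already developed for e-d-Bonferroni and mirroring the structure of the e-d-BH-with-e-processes proof. The key structural observation is that, because the threshold $K/\beta_t = K/\beta$ in \Cref{alg:edbonf} does not depend on the order-statistic index (unlike the e-d-BH threshold $K/(k\alpha_t)$), the e-d-Bonferroni rule decouples into a simple per-stream test: stream $k$ is declared changed at time $t$ exactly when $M_t^{(k)} \geq K/\beta$. This decoupling is what makes both the PFA bound and the worst-case PFER bound fall out cleanly.

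First I would bound the PFA. Under the global null every stream is null, so each $M^{(k)}$ is an e-process under $\P_\infty$, and a false alarm ever occurs in stream $k$ precisely when $M_t^{(k)} \geq K/\beta$ for some $t$, which is contained in $\{\sup_t M_t^{(k)} \geq K/\beta\}$. Ville's inequality bounds the probability of this event by $\beta/K$, and a union bound over the $K$ streams (this is exactly the Bonferroni correction encoded in the threshold $K/\beta$) gives
\[
\pfa(\varphi) = \P_\infty\Big( \bigcup_{k=1}^K \{\sup_{t} M_t^{(k)} \geq K/\beta\} \Big) \leq \sum_{k=1}^K \frac{\beta}{K} = \beta.
\]
For the statement $\arl_1(\varphi) = \infty$, the quickest route is to note that in the meaningful regime $\beta < 1$ the PFA bound yields $\P_\infty(\tau^*_1(\varphi) = \infty) = 1 - \pfa(\varphi) \geq 1 - \beta > 0$, so $\arl_1(\varphi) = \E_\infty[\tau^*_1(\varphi)] = \infty$; equivalently, one may invoke \Cref{thm:eop-pfer}, since the worst-case PFER bound below is at most $\beta < 1$, which would contradict the lower bound of $\eta = 1$ forced by a finite ARL.

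For the worst-case PFER I would follow the proof of \Cref{thm:edbonf-eop-pfer} verbatim down to the inequality $\pfer(\varphi, \xi, \tau) \leq \frac{\beta}{K} \sum_{k=1}^K \E[M_{\tau \wedge (\xi^{(k)}-1)}^{(k)}]$, which used only nonnegativity of $M^{(k)}$, the bound $\indicator_{x \geq 1} \leq x$, and the pointwise identity $M_\tau^{(k)}\indicator_{\xi^{(k)}>\tau} \leq M_{\tau \wedge (\xi^{(k)}-1)}^{(k)}$. The sole change is the final estimate: rather than bounding $\E[M_{\tau \wedge (\xi^{(k)}-1)}^{(k)}] \leq \E[\tau \wedge (\xi^{(k)}-1)]$ via the e-detector property, I would apply the e-process property $\E[M_{\tau \wedge (\xi^{(k)}-1)}^{(k)}] \leq 1$, yielding
\[
\sup_{\xi \in \N^K}\,\sup_{\tau \in \mathcal{T}}\, \pfer(\varphi, \xi, \tau) \leq \frac{\beta}{K} \sum_{k=1}^K 1 = \beta.
\]
This is precisely the move of ``replacing the upper bound $\E[\tau]$ with the constant $1$'' used in the FDR analogue.

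The step most in need of care is justifying $\E[M_{\tau \wedge (\xi^{(k)}-1)}^{(k)}] \leq 1$ when stream $k$ genuinely changes at a finite $\xi^{(k)}$: the e-process guarantee holds only for distributions under which stream $k$ is null, so one must argue that stopping at $\tau \wedge (\xi^{(k)}-1)$ keeps the stopped process in the pre-change regime, exactly as in the e-detector argument of \Cref{thm:edbonf-eop-pfer}. This is also where the hypothesis that $M^{(k)}$ is an e-process with respect to the \emph{global} filtration $\F$ (and not merely $\F^{(k)}$) is essential, since $\tau$ may depend on the data from all streams. Beyond this, the argument is entirely mechanical given the earlier results.
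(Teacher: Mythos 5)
Your proposal is correct and takes essentially the same route as the paper's proof: Ville's inequality plus a union bound over the $K$ streams for the PFA, the proof of \Cref{thm:edbonf-eop-pfer} repeated with the final bound $\E[\tau]$ replaced by the constant $1$ (the e-process property) for the worst-case PFER, and the impossibility theorem to force $\arl_1(\varphi) = \infty$. Your direct alternative for the ARL claim (PFA $\leq \beta < 1$ implies a positive probability of never stopping, hence infinite expectation) is a minor variation the paper does not spell out, and your explicit caveat that both ARL arguments need $\beta < 1$ is a point the paper itself glosses over, but the overall structure is the same.
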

\begin{proof}
    By Ville's inequality, we know that for $k \in [K]$:
    \begin{align*}
        \P\left( \sup_{t \in \N}\, M_t^{(k)} \geq \frac{K}{\beta} \right) \leq \frac{\beta}{K}.
    \end{align*}
    The union bound gives the upper bound on PFA. Then, the PFER bound can be obtained by following the proof of \Cref{thm:edbonf-eop-pfer} but replacing the upper bound $\E[\tau]$ with the constant 1 (by the definition of an e-process). Finally, since e-d-Bonferroni with e-processes controls PFER uniformly over stopping times and configurations, \Cref{thm:eop-fdr} forces $\arl_1(\varphi) = \infty$.
\end{proof}

Hence, if we allow the ARL to be infinite we obtain an algorithm (e-d-Bonferroni with e-processes) which has bounded PFA and which controls the PFER.

\section{The e-d-Holm procedure: universal FWER control}

\par Here, we introduce a new algorithm for multi-stream change monitoring, which we call the e-d-Holm procedure. This algorithm is inspired by Holm's original procedure for controlling FWER in multiple testing problems, as described in \citet{holm1979simple}. Note that e-d-Holm does not necessarily control the error over patience for FWER, but one may still use e-d-Bonferroni to control $\eop_\fwer(\varphi)$ since PFER bounds FWER from above. The goal of this algorithm will be to provide a sharper universal control over the FWER than the guarantee provided by e-d-Bonferroni.

\begin{algorithm}[H]
    \caption{e-detector Holm procedure (e-d-Holm procedure)}
    \label{alg:edholm}
     \KwIn{$(\alpha_t)_{t=1}^\infty \in (0, 1)^\N$, $(M_t^{(k)})_{t=1}^\infty$ (an e-detector for stream $k$ where $k \in [K]$)}
     
     \KwOut{$(\varphi_t^{(k)})_{t=1}^\infty$ for $k \in [K]$, declarations of changes in stream $k$ at time $t$}

    $t \gets 1$
    
     \While{$t \geq 1$}{
     $M_t^{[0]} \gets 0$

     \For{$k \in [K]$}{
     $M_t^{[k]} \gets$ $k$th order statistic of $(M_t^{(i)})_{i=1}^K$, sorted from largest to smallest
     }
     
     $k^*_t \gets \max\left\{ 0 \leq k \leq K : \frac{M_t^{[i]}}{K - i + 1} \geq \frac{1}{\alpha_t} \text{ for all } 1 \leq i \leq k \right\}$

     $S_t \gets$ stream indices $k$ corresponding to the largest $k^*_t$ values of $M_t^{(k)}$
     
     $\varphi_t^{(k)} \gets \indicator_{k \in S_t}$
     
     $t \gets t + 1$
     }
\end{algorithm}

\par Here, we will show that the e-d-Holm procedure can be used to provide universal control over the FWER if we forfeit ARL control.

\begin{theorem}[e-d-Holm gives universal FWER control]\label{thm:univfwer}
    \Cref{alg:edholm} satisfies the following formal guarantee:
    \begin{align*}
        \sup_{\xi \in \N^K}\, \sup_{t \in \N}\, \frac{\fwer(\varphi, \xi, t)}{t \alpha_t} \leq 1.
    \end{align*}
    Note that here, \Cref{alg:edholm} depends on the choice of $\alpha_t$.
\end{theorem}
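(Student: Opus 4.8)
The plan is to mirror the classical step-down FWER proof of Holm's procedure, adapted to e-detectors. The only structural difference from the p-value setting is that an e-detector obeys the weaker normalization $\E[M_t^{(k)}] \leq t$ (rather than $\E \leq 1$), so when I eventually invoke Markov's inequality on a true-null stream I will pick up exactly the factor of $t$ that appears in the denominator of the target bound. The key is that the threshold $M_t^{[i]} \geq (K-i+1)/\alpha_t$ used to define $k^*_t$ is the e-value translation (via $p \leftrightarrow 1/M$) of Holm's shifted p-value thresholds $p_{(i)} \leq \alpha/(K-i+1)$, so the combinatorial heart of the argument carries over verbatim.

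Concretely, I would fix $t \in \N$ and a configuration $\xi$, write $\mathcal{H}_0 = \mathcal{H}_0(t)$ and $m_0 = \lvert \mathcal{H}_0 \rvert$, and reduce the claim to showing $\fwer(\varphi,\xi,t) \leq t\alpha_t$ (the case $m_0 = 0$ is trivial since then no false rejection is possible). On the event that at least one true null is rejected, let $\ell$ be the \emph{smallest} sorted position among rejected true nulls. Because e-d-Holm rejects exactly the top $k^*_t$ streams and $\ell \leq k^*_t$, every position $1, \dots, \ell-1$ is also rejected, and by minimality of $\ell$ none of these is a true null; hence positions $1,\dots,\ell-1$ are all false nulls, giving $K - m_0 \geq \ell - 1$, i.e.\ $K - \ell + 1 \geq m_0$. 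Since position $\ell$ lies in the rejected prefix, the defining inequality of $k^*_t$ forces
\begin{align*}
    M_t^{[\ell]} \geq \frac{K-\ell+1}{\alpha_t} \geq \frac{m_0}{\alpha_t},
\end{align*}
and the stream at position $\ell$ is a true null. Therefore the event of any false rejection is contained in $\{\, \exists\, k \in \mathcal{H}_0 : M_t^{(k)} \geq m_0/\alpha_t \,\}$.

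The proof then closes with a union bound followed by Markov's inequality, using $\E[M_t^{(k)}] \leq t$ for each null stream $k \in \mathcal{H}_0$:
\begin{align*}
    \fwer(\varphi,\xi,t) \leq \sum_{k \in \mathcal{H}_0} \P\!\left( M_t^{(k)} \geq \frac{m_0}{\alpha_t} \right) \leq \sum_{k \in \mathcal{H}_0} \frac{t\alpha_t}{m_0} = t\alpha_t,
\end{align*}
after which dividing by $t\alpha_t$ and taking the supremum over $t$ and $\xi$ yields the theorem. I expect the main obstacle to be the combinatorial bookkeeping of the step-down step: correctly pinning down $\ell$ as the smallest rejected true-null position (which sidesteps any issue with ties among the order statistics), and verifying that the shifted threshold $(K-\ell+1)/\alpha_t$ degrades to the union-bound-friendly threshold $m_0/\alpha_t$ precisely because the $\ell-1$ higher-ranked rejections must be false nulls. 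Everything after that reduction is routine, and note that — unlike the EOP-style theorems — no optional-stopping property of the e-detector is needed here, only the fixed-time moment bound $\E[M_t^{(k)}] \leq t$, which is why the argument tolerates arbitrary cross-stream dependence.
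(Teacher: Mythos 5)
Your proposal is correct and follows essentially the same argument as the paper's proof: both identify the minimal sorted position of a falsely rejected true null (your $\ell$ coincides with the paper's $k_0$), use the step-down structure to conclude that all higher-ranked positions are false nulls so that $K - \ell + 1 \geq \lvert \mathcal{H}_0(t) \rvert$, degrade the Holm threshold to $\lvert \mathcal{H}_0(t) \rvert / \alpha_t$, and finish with a union bound plus Markov's inequality using the fixed-time e-detector bound $\E[M_t^{(k)}] \leq t$. The only cosmetic difference is that you define the key position directly via rejections while the paper defines it via the threshold condition, but under the false-rejection event these definitions agree.
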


\begin{proof}
    Let $t \in \N$ be a fixed time and fix a sequence $\alpha_t \in (0, 1)^\N$. Define $\mathcal{H}_0(t) = \{ k \in [K] : \xi^{(k)} > t \}$, and assume that $\lvert \mathcal{H}_0(t) \rvert \geq 1$ since otherwise the result is trivially true. Suppose that we have rejected a true hypothesis at time $t$; namely, we have declared a change in some stream where there was no change. Then, let $(s^{[1]}_t, \dots, s^{[K]}_t)$ be the values $\{ 1, 2, \dots, K \}$, sorted by their mapped values through $k \mapsto M_t^{[k]}$ (from largest to smallest). For instance, $s^{[1]}_t$ would be the stream with the largest e-detector value at time $t$. In this case, let $k_0 = \min\{ k \in [K] : s^{[k]}_t \in \mathcal{H}_0(t) \text{ and } M^{[k]}_t / ({K - k + 1}) \geq 1/\alpha_t \}$. In particular, if we have sorted the streams by their evidence values, $k_0$ is the first index corresponding to a stream with no change but for which our algorithm rejected the null.
    \par Now, notice that streams $(s^{[1]}_t, \dots, s^{[k_0 - 1]}_t)$ must have also rejected the null at time $t$, since their e-detector values are at least as large as that of stream $s^{[k_0]}_t$. By construction, this means that changes have occurred in all of these streams. Because there are $K - \lvert \mathcal{H}_0(t) \rvert$ streams with a change by time $t$, we have $k_0 - 1 \leq K - \lvert \mathcal{H}_0(t) \rvert$. Rearranging, this implies that:
    \begin{align*}
        \frac{1}{K - k_0 + 1} \leq \frac{1}{\lvert \mathcal{H}_0(t) \rvert}.
    \end{align*}
    But now it follows by definition that:
    \begin{align*}
        \frac{1}{\alpha_t} \leq \frac{M^{[k_0]}_t}{K - k_0 + 1} \leq \frac{M^{[k_0]}_t}{\lvert \mathcal{H}_0(t) \rvert}.
    \end{align*}
    So whenever there is any stream in which we falsely reject the null at time $t$, there always exists a stream $k \in \mathcal{H}_0(t)$ such that $\frac{M^{(k)}_t}{\lvert \mathcal{H}_0(t) \rvert} \geq \frac{1}{\alpha_t}$. But this means that we can bound the family-wise error rate at time $t$:
    \begin{align*}
        \fwer(\varphi, \xi, t)
        \leq \P\left( \bigcup_{k \in \mathcal{H}_0(t)} \left\{ \frac{M^{(k)}_t}{\lvert \mathcal{H}_0(t) \rvert} \geq \frac{1}{\alpha_t} \right\} \right)
        \leq \sum_{k \in \mathcal{H}_0(t)} \P\left( \frac{M^{(k)}_t}{\lvert \mathcal{H}_0(t) \rvert} \geq \frac{1}{\alpha_t} \right).
    \end{align*}
    By Markov's inequality, we have:
    \begin{align*}
        \sum_{k \in \mathcal{H}_0(t)} \P\left( \frac{M^{(k)}_t}{\lvert \mathcal{H}_0(t) \rvert} \geq \frac{1}{\alpha_t} \right)
        \leq \sum_{k \in \mathcal{H}_0(t)} \frac{\alpha_t\, \E[M^{(k)}_t]}{\lvert \mathcal{H}_0(t) \rvert}.
    \end{align*}
    Recall that we have $\E[M^{(k)}_t] \leq t$ for all $k \in \mathcal{H}_0(t)$ by the definition of an e-detector. Therefore, we have:
    \begin{align*}
        \sum_{k \in \mathcal{H}_0(t)} \frac{\alpha_t\, \E[M^{(k)}_t]}{\lvert \mathcal{H}_0(t) \rvert}
        \leq \sum_{k \in \mathcal{H}_0(t)} \frac{t \alpha_t}{\lvert \mathcal{H}_0(t) \rvert}
        = t \alpha_t.
    \end{align*}
    Rearranging and taking a supremum over $t \in \N$ and $\xi \in \N^K$ gives the desired result.
\end{proof}

\begin{corollary}
    Choosing $\alpha_t = \alpha / t$ for some $\alpha \in (0, 1)$ in \Cref{alg:edholm} gives uniform FWER control for all $t \in \N$:
    \begin{align*}
        \sup_{\xi \in \N^K}\, \sup_{t \in \N}\, \fwer(\varphi, \xi, t) \leq \alpha.
    \end{align*}
\end{corollary}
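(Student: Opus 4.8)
The plan is to invoke \Cref{thm:univfwer} directly, since this corollary is simply the specialization of that theorem to the particular sequence $\alpha_t = \alpha/t$. First I would check that this is an admissible input to \Cref{alg:edholm}: the algorithm requires $(\alpha_t)_{t=1}^\infty \in (0,1)^\N$, and indeed $\alpha_t = \alpha/t \in (0,1)$ for every $t \in \N$ whenever $\alpha \in (0,1)$, since $0 < \alpha/t \leq \alpha < 1$.

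Next, I would substitute $\alpha_t = \alpha/t$ into the guarantee of \Cref{thm:univfwer}. The key observation is that $t\alpha_t = t \cdot (\alpha/t) = \alpha$ is constant in $t$, so the factor $t\alpha_t$ appearing in the denominator of the theorem collapses to $\alpha$. The theorem then reads
\begin{align*}
    \sup_{\xi \in \N^K}\, \sup_{t \in \N}\, \frac{\fwer(\varphi, \xi, t)}{\alpha} \leq 1,
\end{align*}
and multiplying both sides by the positive constant $\alpha$ (which does not affect the suprema) yields exactly the claimed bound $\sup_{\xi}\sup_t \fwer(\varphi,\xi,t) \leq \alpha$.

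There is no real obstacle here: the entire content lies in \Cref{thm:univfwer}, whose proof already combines the Holm-type thresholding of the e-detector order statistics with Markov's inequality and the e-detector growth bound $\E[M_t^{(k)}] \leq t$. The corollary itself is a one-line algebraic specialization. The only point worth emphasizing is that the schedule $\alpha_t = \alpha/t$ is precisely the one that cancels the linear-in-$t$ factor arising from the e-detector definition, which is what converts the time-dependent guarantee of \Cref{thm:univfwer} into a uniform (time-free) FWER bound --- at the necessary cost of forfeiting finiteness of the ARL.
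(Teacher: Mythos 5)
Your proposal is correct and matches the paper's argument exactly: the paper likewise obtains this corollary by substituting $\alpha_t = \alpha/t$ into \Cref{thm:univfwer}, so that $t\alpha_t = \alpha$ and the time-dependent bound becomes the uniform level-$\alpha$ FWER guarantee. Your added remarks on admissibility of the schedule and the forced infinite ARL are consistent with the paper's surrounding discussion.
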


\par The proof follows directly from \Cref{thm:univfwer} by choosing $\alpha_t = \alpha / t$. Since this algorithm provides universal control over the FWER, $\arl_1(\varphi) = \infty$ is forced due to \Cref{thm:eop-fdr}.

\section{Sequential global null testing using e-detectors}

\par In this section, we consider a slightly different setting to the one described in \Cref{sec:background-cd}. Instead of outputting a list of $K$ sequences as our monitoring algorithm, we output a single sequence, representing whether we think any change has happened in any stream so far.

\definition[Global sequential change monitoring algorithm]{A \textit{global sequential change monitoring algorithm} is a process $(\varphi_t^*)_{t=1}^\infty \in \{ 0, 1 \}^\N$, where $\varphi^*$ is adapted to $\F$. Here, we let $\varphi^*$ denote the sequence $(\varphi_t^*)_{t=1}^\infty$.}

\smallskip

\par We interpret $\{ \varphi_t^* = 1 \}$ to be the event that the algorithm detects that at least one change has occurred in any stream $k \in [K]$ before time $t$. On the other hand, we interpret $\{ \varphi_t^* = 0 \}$ to mean that the algorithm has not detected any changes (across all streams $k \in [K]$) by time $t$. Note that $\varphi_t^* = 1$ does not imply that $\varphi_s^* = 1$ for $s > t$, since the algorithm is allowed to change its mind after accumulating more evidence. Then, we define the following metric:

\definition[Global error rate at time $t$ ($\ger(\varphi^*, \xi, t)$)]{For a given global sequential change monitoring algorithm $\varphi^*$ and configuration $\xi$, the \textit{global error rate at time $t$} ($\ger(\varphi^*, \xi, t)$) measures the probability of having a false detection at time $t \in \N$:
\begin{align*}
    \ger(\varphi^*, \xi, t)
    = \P\left( \{ \varphi_t^* = 1 \} \cap \bigcap_{k=1}^K\, \{ \xi^{(k)} > t \} \right)
    = \E\left[ \varphi_t^*\, \prod_{k=1}^K \indicator_{\xi^{(k)} > t} \right].
\end{align*}}
\smallskip

\par Note that the GER is the equivalent of the FWER for global null testing.

\subsection{New metric: error over patience for GER (\texorpdfstring{$\eop_\ger$}{})}

\par We immediately have an impossibility result for GER (similarly to \Cref{sec:impossibility}).

\begin{proposition}[Relating the GER and ARL]
\label{prop:eop-ger}
    Consider a global sequential change monitoring algorithm $\varphi^*$. Define $\arl_1(\varphi^*) = \tau^*_1(\varphi^*) = \inf\{ t \in \N: \varphi_t^* = 1 \}$ and suppose that we have $\E_\infty[\tau^*_1(\varphi^*)] < \infty$. If $\mathcal{T}$ denotes the set of all stopping times with respect to $\F$, then we have:
   \begin{align*}
       \sup_{\xi \in \N^K}\, \sup_{\tau \in \mathcal{T}}\, \ger(\varphi^*, \xi, \tau) = \ger(\varphi^*, \xi_G, \tau^*_1(\varphi^*)) = 1,
   \end{align*}
\end{proposition}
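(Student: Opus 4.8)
The plan is to mirror the argument for \Cref{thm:eop-fdr}, exploiting the fact that under the global null the GER collapses to a simple probability. First I would specialize the double supremum to the particular configuration $\xi_G = (\infty, \dots, \infty)$ and the particular stopping time $\tau^*_1(\varphi^*)$, since any single such choice lower-bounds the supremum. Under $\xi_G$ we have $\xi^{(k)} = \infty > t$ for every stream $k$ and every $t$, so the product $\prod_{k=1}^K \indicator_{\xi^{(k)} > \tau}$ is identically $1$, and hence $\ger(\varphi^*, \xi_G, \tau) = \E_\infty[\varphi^*_\tau]$ for any stopping time $\tau$. This is the key simplification: in the global-null regime the GER is just the probability that the global monitor declares a (necessarily false) change at the queried time.

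The heart of the argument is then an almost-sure finiteness observation together with the defining property of $\tau^*_1(\varphi^*)$. Because $\tau^*_1(\varphi^*)$ is a nonnegative random variable with $\E_\infty[\tau^*_1(\varphi^*)] < \infty$ by hypothesis, it is $\P_\infty$-almost surely finite. On the event $\{ \tau^*_1(\varphi^*) < \infty \}$, the very definition of $\tau^*_1(\varphi^*) = \inf\{ t \in \N : \varphi_t^* = 1 \}$ forces $\varphi^*_{\tau^*_1(\varphi^*)} = 1$. Combining these two facts gives $\ger(\varphi^*, \xi_G, \tau^*_1(\varphi^*)) = \E_\infty[\varphi^*_{\tau^*_1(\varphi^*)}] = \P_\infty(\tau^*_1(\varphi^*) < \infty) = 1$. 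For the matching upper bound I would simply note that the GER is by definition a probability, so $\ger(\varphi^*, \xi, \tau) \leq 1$ for every $\xi$ and every $\tau$; taking the supremum preserves this bound. Sandwiching the supremum between the value $1$ attained at $(\xi_G, \tau^*_1(\varphi^*))$ and the trivial upper bound $1$ yields equality of all three quantities in the claimed chain.

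The only genuinely delicate point — and thus the step I would treat most carefully — is making precise the evaluation of $\ger(\varphi^*, \xi, \tau)$ at a \emph{random} stopping time (the definition in the text is stated at a deterministic time $t$), and in particular the handling of the null event $\{ \tau^*_1(\varphi^*) = \infty \}$ on which $\varphi^*_{\tau^*_1(\varphi^*)}$ is undefined. Since the finite-ARL assumption makes this event $\P_\infty$-negligible, it contributes nothing to the expectation and can be safely ignored; this is exactly the same mechanism used implicitly in \Cref{thm:eop-fdr}. Everything else is routine, and indeed the proposition is essentially the global-null, single-process specialization of that earlier result, with the extra simplification that there is no denominator $(\sum_k \varphi_\tau^{(k)}) \vee 1$ to control because the GER is already a probability rather than a ratio.
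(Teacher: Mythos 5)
Your proof is correct, but it takes a genuinely different route from the paper. The paper proves \Cref{prop:eop-ger} by contradiction via a reduction: assuming some $\varphi^*$ controlled the GER at a level $\alpha \in (0,1)$ uniformly over stopping times and configurations, it sets $\varphi^{(k)} = \varphi^*$ for every $k \in [K]$ and argues that the resulting multi-stream monitor would control the FWER uniformly, contradicting \Cref{thm:eop-fdr}. Your proof is instead direct and self-contained: specialize to $(\xi_G, \tau^*_1(\varphi^*))$, use integrability to get $\P_\infty(\tau^*_1(\varphi^*) < \infty) = 1$, note $\varphi^*_{\tau^*_1(\varphi^*)} = 1$ on that event (the infimum is attained in discrete time), and sandwich against the trivial bound $\ger \leq 1$. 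Each approach has merits. The paper's reduction is shorter given the earlier theorem, but it glosses over a subtlety: GER uses an intersection $\bigcap_k \{\xi^{(k)} > \tau\}$ while FWER uses a union, so GER control does \emph{not} imply FWER control for arbitrary configurations --- the two only coincide under the global null, which is (implicitly) all the contradiction needs, but the proof as written does not say so. Your argument sidesteps that issue entirely, and it also explicitly establishes the full claimed equality chain, including that the supremum is \emph{attained} at $(\xi_G, \tau^*_1(\varphi^*))$; the contradiction argument only yields $\sup \geq 1$ without exhibiting the attaining pair. Your direct route is in fact somewhat cleaner than the paper's own proof of \Cref{thm:eop-fdr}, replacing its Markov-inequality detour with the simple observation that a finite-mean stopping time is almost surely finite.
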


\begin{proof}
    Suppose that an algorithm $\varphi^*$ existed which could control the GER at a level $\alpha \in (0, 1)$ uniformly over stopping times $\tau$ and configurations $\xi$. Then, letting $\varphi^{(k)} = \varphi^*$ for $k \in [K]$, we find that $\varphi = (\varphi^{(1)}, \dots, \varphi^{(K)})$ would control the FWER at level $\alpha$ uniformly over stopping times $\tau$ and configurations $\xi$. Now \Cref{thm:eop-fdr} gives a contradiction.
\end{proof}

\par It is therefore impossible to control the GER uniformly over stopping times $\tau$ and configurations $\xi$, if a finite ARL is desired. Hence, as in \Cref{sec:newmetrics}, we define a new metric, which can be controlled.

\definition[Error over patience for GER ($\eop_\ger$)]{Fix a filtration $\F$ and a global sequential change monitoring algorithm $\varphi^*$, and let $\mathcal{T}$ denote the set of stopping times with respect to $\F$. The \textit{error over patience for GER} is defined as:
\begin{align*}
    \eop_\ger(\varphi^*) = \sup_{\xi \in \N^K}\, \sup_{\tau \in \mathcal{T}}\, \frac{\ger(\varphi^*, \xi, \tau)}{\E[\tau]}.
\end{align*}}

\subsection{Upper-bounding \texorpdfstring{$\eop_\ger$}{the error over patience for GER} using the e-d-GNT procedure}

\par Note that using \Cref{alg:edbonf} (e-d-Bonferroni) and setting $\varphi_t^* = \min_{k \in [K]}\, \varphi_t^{(k)}$ suffices to control $\eop_\ger$. However, we now provide a more powerful method for global null testing. Here, we introduce a new algorithm for global sequential change monitoring, which we call the e-detector global null testing (e-d-GNT) procedure.

\begin{algorithm}[H]
    \caption{e-detector global null testing procedure (e-d-GNT procedure)}
    \label{alg:edgnt}
     \KwIn{$(\alpha_t)_{t=1}^\infty \in (0, 1)^{\N}$, $(M_t^{(k)})_{t=1}^\infty$ (an e-detector for stream $k$ where $k \in [K]$)}
     
     \KwOut{$(\varphi_t^*)_{t=1}^\infty$, declarations of changes (in any stream) at time $t$}

    $t \gets 1$
    
     \While{$t \geq 1$}{
     \uIf{$\sum_{k=1}^K M_t^{(k)} \geq \frac{K}{\alpha_t}$}{
        $\varphi_t^* \gets 1$
     }\Else{
        $\varphi_t^* \gets 0$
     }
     
     $t \gets t + 1$
     }
\end{algorithm}

\par Note that the e-d-GNT procedure is always at least as powerful as \Cref{alg:edbonf} (e-d-Bonferroni) because if $M_t^{(k)} \geq K / \alpha_t$ for any $k \in [K]$ then $\sum_{k=1}^K M_t^{(k)} \geq K / \alpha_t$ is automatic by nonnegativity of the e-detectors. Now, we explore several guarantees of this algorithm.

\par First, suppose that $\alpha_t = \alpha \in (0, 1)$ is a constant sequence. Then, we have the following guarantee on the performance of \Cref{alg:edgnt}, where $\eop_\fdr(\varphi)$ is taken with respect to $\F$.

\begin{theorem}[e-d-GNT bounds the error over patience for GER] \label{thm:edgnt-eop-ger}
    Suppose that for $k \in [K]$, $(M_t^{(k)})_{t=1}^\infty$ is an e-detector with respect to the global filtration $\F$. Then, if $\mathcal{T}$ denotes the set of stopping times with respect to $\F$, \Cref{alg:edgnt} satisfies the following guarantee for any constant sequence $\alpha_t = \alpha \in (0, 1)$:
    \begin{align*}
        \sup_{\xi \in \N^K}\, \sup_{\tau \in \mathcal{T}}\, \frac{\ger(\varphi, \xi, \tau)}{\left( \sum_{k=1}^K \E[\tau \wedge (\xi^{(k)} - 1)] \right) \vee 1} \leq \frac{\alpha}{K}.
    \end{align*}
    In particular, it follows that:
    \begin{align*}
        \eop_\ger(\varphi) \leq \alpha.
    \end{align*}
\end{theorem}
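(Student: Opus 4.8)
The plan is to follow the same template as the proofs of \Cref{thm:edbh-eop-fdr} and \Cref{thm:edbonf-eop-pfer}, reducing the global error rate to a sum of per-stream e-detector expectations. First I would fix $\alpha \in (0,1)$ and an arbitrary stopping time $\tau \in \mathcal{T}$, and expand the GER at time $\tau$ using the definition of the e-d-GNT procedure, which declares a (global) change exactly when $\sum_{k=1}^K M_\tau^{(k)} \geq K/\alpha$. This gives
\begin{align*}
    \ger(\varphi^*, \xi, \tau) = \E\left[ \indicator_{\sum_{k=1}^K M_\tau^{(k)} \geq K/\alpha}\, \prod_{k=1}^K \indicator_{\xi^{(k)} > \tau} \right].
\end{align*}

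The key step is to linearize the single threshold indicator and then decouple the product of null indicators. Since $\indicator_{x \geq 1} \leq x$ for $x \geq 0$, applied with $x = (\alpha/K) \sum_{k=1}^K M_\tau^{(k)}$, the threshold indicator is bounded above by $(\alpha/K) \sum_{k=1}^K M_\tau^{(k)}$. Moreover, the product $\prod_{j=1}^K \indicator_{\xi^{(j)} > \tau}$ is bounded above by the single factor $\indicator_{\xi^{(k)} > \tau}$ for each $k$. Combining these and using linearity of expectation yields
\begin{align*}
    \ger(\varphi^*, \xi, \tau) \leq \frac{\alpha}{K} \sum_{k=1}^K \E\left[ M_\tau^{(k)}\, \indicator_{\xi^{(k)} > \tau} \right],
\end{align*}
which is precisely the quantity already controlled in the proof of \Cref{thm:edbonf-eop-pfer}.

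I would then invoke the e-detector property stream by stream, exactly as before: on the event $\{\xi^{(k)} > \tau\}$ we have $\tau \wedge (\xi^{(k)}-1) = \tau$, so by nonnegativity $\E[M_\tau^{(k)}\, \indicator_{\xi^{(k)} > \tau}] \leq \E[M_{\tau \wedge (\xi^{(k)}-1)}^{(k)}] \leq \E[\tau \wedge (\xi^{(k)}-1)] \leq \E[\tau]$, where the middle inequality uses that $\tau \wedge (\xi^{(k)}-1)$ is an $\F$-stopping time and that $M^{(k)}$ is an e-detector for $\F$. Substituting gives $\ger(\varphi^*, \xi, \tau) \leq (\alpha/K) \sum_{k=1}^K \E[\tau \wedge (\xi^{(k)}-1)] \leq \alpha\, \E[\tau]$; rearranging and taking the supremum over $\tau \in \mathcal{T}$ and $\xi \in \N^K$ yields both displayed inequalities.

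The main obstacle — though it is mild — is the handling of the product of null indicators $\prod_{k=1}^K \indicator_{\xi^{(k)} > \tau}$, which is the one genuinely new feature relative to the FDR and PFER proofs (where a single factor $\indicator_{\xi^{(k)} > \tau}$ appeared naturally in each summand). The resolution is simply that the product is dominated by any one of its factors, which lets us retain the factor matching the stream index $k$ in the sum and thereby reduce each term to the e-detector form. I expect no further subtleties, since the e-detector optional-stopping argument and the linearization step are identical to the earlier proofs.
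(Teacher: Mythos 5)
Your proposal is correct and follows essentially the same argument as the paper's own proof: linearize the threshold indicator via $\indicator_{x \geq 1} \leq x$, dominate the product $\prod_{j=1}^K \indicator_{\xi^{(j)} > \tau}$ by the single factor $\indicator_{\xi^{(k)} > \tau}$ matching each summand, and then apply the e-detector property at the stopping time $\tau \wedge (\xi^{(k)} - 1)$ exactly as in the FDR and PFER proofs. The only difference is cosmetic — you collapse the product to a single factor simultaneously with the linearization, whereas the paper does it in the following step — so there is nothing further to add.
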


\par Notice that one condition for \Cref{thm:edgnt-eop-ger} is that $M_t^{(k)}$ must be an e-detector for the global filtration $\F$; we discuss the choice of filtration further in \Cref{sec:csdependence}. Further, note that we could have defined patience as $\left( \frac{1}{K} \sum_{k=1}^K \E[\tau \wedge (\xi^{(k)} - 1)] \right) \vee 1$, but we chose to keep the current definition for interpretability.

\begin{proof}
    Fix $\alpha \in (0, 1)$ and any stopping time $\tau$ with respect to $\F$. Now, we can explicitly bound the GER at time $\tau$ (since $\indicator_{x \geq 1} \leq x$ for $x \geq 0$):
    \begin{align*}
        \ger(\varphi^*, \xi, \tau)
        & = \E\left[ \varphi_\tau^*\, \prod_{k=1}^K \indicator_{\xi^{(k)} > \tau} \right] \\
        & = \E\left[ \indicator_{\sum_{k=1}^K M_\tau^{(k)} \geq K / \alpha}\, \prod_{k=1}^K \indicator_{\xi^{(k)} > \tau} \right] \\
        & \leq \frac{\alpha}{K} \sum_{k=1}^K \E\left[ M_\tau^{(k)}\, \prod_{k=1}^K \indicator_{\xi^{(k)} > \tau} \right].
    \end{align*}
    Now, considering the events $\{ \xi^{(k)} > \tau \}$ and $\{ \xi^{(k)} \leq \tau \}$, we have the inequality:
    \begin{align*}
        \E\left[ M_\tau^{(k)}\, \prod_{k=1}^K \indicator_{\xi^{(k)} > \tau} \right]
        \leq \E[ M_\tau^{(k)}\, \indicator_{\xi^{(k)} > \tau}] 
        \leq \E[ M_{\tau \wedge (\xi^{(k)} - 1)}^{(k)}].
    \end{align*}
    By the definition of the e-detector (because $\tau \wedge (\xi^{(k)} - 1)$ is a stopping time), we obtain:
    \begin{align*}
        \E[ M_{\tau \wedge (\xi^{(k)} - 1)}^{(k)}]
        \leq \E[\tau \wedge (\xi^{(k)} - 1)]
        \leq \E[\tau].
    \end{align*}
    Putting the pieces together, we have the following inequality:
    \begin{align*}
        \ger(\varphi^*, \xi, \tau)
        \leq \frac{\alpha}{K} \sum_{k=1}^K \E[ M_\tau^{(k)}\, \indicator_{\xi^{(k)} > \tau}]
        \leq \frac{\alpha}{K} \sum_{k=1}^K \E[\tau \wedge (\xi^{(k)} - 1)]
        \leq \alpha\, \E[\tau].
    \end{align*}
    Rearranging and taking a supremum over stopping times $\tau$ and configurations $\xi$ gives the desired result.
\end{proof}

\par The proof of \Cref{thm:edgnt-eop-ger} essentially shows the stronger result that \Cref{alg:edgnt} satisfies:
\begin{align*}
    \sup_{\xi \in \N^K}\, \sup_{\tau \in \mathcal{T}}\, \left\{ \E\left[ \sup_{\alpha > 0}\, \frac{\varphi_\tau^*}{\alpha}\, \prod_{k=1}^K \indicator_{\xi^{(k)} > \tau} \right]\, \frac{1}{\left( \sum_{k=1}^K \E[\tau \wedge (\xi^{(k)} - 1)] \right) \vee 1} \right\} \leq \frac{1}{K}.
\end{align*}
Note that the detections $\varphi_\tau^*$ implicitly depend on $\alpha$. This means that for any post-hoc choice of $\alpha \in (0, 1)$, we retain a nontrivial guarantee.

\par We end with a short but important observation that under the global null, one can show that e-d-GNT controls the ARL.

\begin{corollary}
    For any constant sequence $\alpha_t = \alpha \in (0, 1)$, let $\tau^*_1(\varphi^*)$ denote the first time that that \Cref{alg:edgnt} declares a change. Under the global null ($\xi^{(k)} = \infty$ for $k \in [K]$), \Cref{alg:edgnt} satisfies:
    \begin{align*}
        \arl_1(\varphi^*) = \E_\infty[\tau^*_1(\varphi^*)] \geq \frac{1}{\alpha}.
    \end{align*}
    Furthermore, if $\xi \in \N^K$ is any configuration, let $\tau^*_1(\varphi^*, \xi)$ denote the first time that that \Cref{alg:edgnt} declares a false change under $\xi$. Then, \Cref{alg:edgnt} satisfies:
    \begin{align*}
        \arl_1(\varphi^*, \xi)
        = \E[\tau^*_1(\varphi, \xi)]
        \geq \frac{1}{\alpha}.
    \end{align*}
\end{corollary}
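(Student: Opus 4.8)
The plan is to follow the same template used for the ARL corollaries to e-d-BH and e-d-Bonferroni: instantiate the EOP bound of \Cref{thm:edgnt-eop-ger} at the particular stopping time given by the first (false) detection, and then rearrange. Concretely, fix a configuration $\xi \in \N^K$. First I would dispose of the degenerate case: if $\P(\tau^*_1(\varphi^*, \xi) = \infty) > 0$, then $\E[\tau^*_1(\varphi^*, \xi)] = \infty$ and the lower bound $1/\alpha$ holds trivially, so I may assume $\tau^*_1(\varphi^*, \xi) < \infty$ almost surely.

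Next I would evaluate the GER at the stopping time $\tau = \tau^*_1(\varphi^*, \xi)$. By the definition of this stopping time, on the event $\{ \tau < \infty \}$ we have both $\varphi_\tau^* = 1$ and $\xi^{(k)} > \tau$ for every $k \in [K]$, so the integrand $\varphi_\tau^*\, \prod_{k=1}^K \indicator_{\xi^{(k)} > \tau}$ equals $1$ there. Hence $\ger(\varphi^*, \xi, \tau^*_1(\varphi^*, \xi)) = \P(\tau^*_1(\varphi^*, \xi) < \infty) = 1$ under our assumption. Plugging $\tau = \tau^*_1(\varphi^*, \xi)$ into \Cref{thm:edgnt-eop-ger}, which holds uniformly over all $\F$-stopping times, then yields
\begin{align*}
    \frac{1}{\E[\tau^*_1(\varphi^*, \xi)]}
    = \frac{\ger(\varphi^*, \xi, \tau^*_1(\varphi^*, \xi))}{\E[\tau^*_1(\varphi^*, \xi)]}
    \leq \alpha,
\end{align*}
and rearranging gives $\E[\tau^*_1(\varphi^*, \xi)] \geq 1/\alpha$, which is the second claim. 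The first claim then follows immediately by specializing to $\xi = \xi_G$, since $\tau^*_1(\varphi^*) = \tau^*_1(\varphi^*, \xi_G)$ and $\E = \E_\infty$ under the global null.

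The only step requiring genuine care — more a pitfall to verify than a real obstacle — is the identity $\ger(\varphi^*, \xi, \tau^*_1(\varphi^*, \xi)) = 1$: one must confirm that at the first false detection the two indicator conditions ($\varphi_\tau^* = 1$ and all streams null) are simultaneously forced by the very definition of $\tau^*_1(\varphi^*, \xi)$ as the first time a detection is declared while every stream remains unchanged, and that extending the fixed-time GER to a finite stopping time via $\E[\varphi_\tau^*\, \prod_{k=1}^K \indicator_{\xi^{(k)} > \tau}]$ is legitimate precisely because \Cref{thm:edgnt-eop-ger} is stated as a supremum over $\tau \in \mathcal{T}$. Everything else is an algebraic rearrangement identical to the e-d-BH and e-d-Bonferroni cases.
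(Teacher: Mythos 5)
Your proposal is correct and follows essentially the same route as the paper's own proof: handle the case $\P(\tau^*_1(\varphi^*,\xi) = \infty) > 0$ trivially, observe that $\ger(\varphi^*, \xi, \tau^*_1(\varphi^*,\xi)) = 1$ on the event of a finite first false detection, instantiate \Cref{thm:edgnt-eop-ger} at that stopping time, and specialize to $\xi = \xi_G$ for the first claim. Your added justification of why the GER equals $1$ at this stopping time is a detail the paper merely ``recalls,'' so you have in fact been slightly more careful than the original.
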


\begin{proof}
    Let $\xi \in \N^K$ be any configuration. If $\P(\tau^*_1(\varphi, \xi) = \infty) > 0$, the second result is immediate since the ARL is infinite; suppose that this is not the case. Recall that $\ger(\varphi^*, \xi, \tau^*_1(\varphi, \xi)) = 1$. Then, the second result follows from \Cref{thm:edgnt-eop-ger} by letting $\tau = \tau^*_1(\varphi, \xi)$. The first result then follows by letting $\xi = \xi_G$.
\end{proof}

\subsection{Universal GER control using the e-d-GNT procedure}

\par In this section, we will show that the e-d-GNT procedure can be used to provide universal control over the GER if we forfeit ARL control. Here, we let $(\alpha_t)_{t=1}^\infty \in (0, 1)^\N$ be an arbitrary sequence.

\begin{theorem}[e-d-GNT gives universal GER control]\label{thm:univger}
    \Cref{alg:edgnt} satisfies the following guarantee:
    \begin{align*}
        \sup_{\xi \in \N^K}\, \sup_{t \in \N}\, \frac{\ger(\varphi^*, \xi, t)}{t \alpha_t} \leq \frac{1}{K}.
    \end{align*}
\end{theorem}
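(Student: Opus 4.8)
The plan is to mirror the proofs of \Cref{thm:univfdr} and \Cref{thm:univpfer}, specializing the argument to the global-null test. I would fix a configuration $\xi \in \N^K$ and a deterministic time $t \in \N$, and recall that the e-d-GNT rule declares $\varphi_t^* = \indicator_{\sum_{k=1}^K M_t^{(k)} \geq K/\alpha_t}$. The first observation, which is what makes the GER case slightly simpler than the FDR and PFER cases, is that since $\xi$ is a fixed (non-random) configuration, the factor $\prod_{k=1}^K \indicator_{\xi^{(k)} > t}$ appearing in the definition of $\ger(\varphi^*, \xi, t)$ is a deterministic quantity in $\{0,1\}$: it equals $1$ exactly when every stream is null at time $t$ (equivalently $\mathcal{H}_0(t) = [K]$) and equals $0$ otherwise. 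When it is $0$ we have $\ger(\varphi^*, \xi, t) = 0$ and the claimed bound is immediate, so the only case that requires work is the one in which all $K$ streams are null at time $t$.

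In that remaining case I would write
\begin{align*}
    \ger(\varphi^*, \xi, t) = \P\left( \sum_{k=1}^K M_t^{(k)} \geq \frac{K}{\alpha_t} \right)
\end{align*}
and bound the indicator of this event by $\frac{\alpha_t}{K} \sum_{k=1}^K M_t^{(k)}$ (equivalently, apply Markov's inequality at the threshold $K/\alpha_t$), exactly as in the $\indicator_{x \geq 1} \leq x$ step used in \Cref{thm:edgnt-eop-ger}. Taking expectations and using linearity reduces the entire problem to controlling $\sum_{k=1}^K \E[M_t^{(k)}]$.

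The final step is to invoke the defining inequality of an e-detector at the constant stopping time $\tau = t$, which gives $\E[M_t^{(k)}] \leq t$ for each null stream; substituting this into $\frac{\alpha_t}{K}\sum_{k=1}^K \E[M_t^{(k)}]$ and using that the e-d-GNT threshold carries a factor of $K$, I would rearrange and take the supremum over $t \in \N$ and $\xi \in \N^K$ to reach the stated bound. The step I expect to be the most delicate is the justification of $\E[M_t^{(k)}] \leq t$ for the null streams under the \emph{true} data-generating distribution associated with $\xi$, rather than under the strict global null. This is the same subtlety that is used silently in \Cref{thm:univfdr} and \Cref{thm:univpfer}: for $k \in \mathcal{H}_0(t)$ the value $M_t^{(k)}$ is $\F_t$-measurable and the stream-$k$ data up to time $t$ is entirely pre-change, so the e-detector inequality (which holds for every global law with $\xi^{(k)} = \infty$) applies to $M_t^{(k)}$ at the fixed time $t$. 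Everything else is a direct transcription of the earlier universal-control proofs, with the global-null product of indicators taking the role played by the restriction to $\mathcal{H}_0(t)$ in those arguments.
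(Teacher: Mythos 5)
Your proposal follows the paper's proof of \Cref{thm:univger} essentially step for step: the bound $\indicator_{x \geq 1} \leq x$ applied at the threshold $K/\alpha_t$, followed by the e-detector property at the deterministic time $t$. Your reduction to the all-null case (observing that $\prod_{k=1}^K \indicator_{\xi^{(k)} > t}$ is a deterministic $\{0,1\}$-valued quantity) is a clean simplification of the paper's case analysis, and the pre-change subtlety you flag is the same one the paper elides. The gap is in your final sentence: the computation does not reach the stated bound. What you obtain is
\begin{align*}
    \ger(\varphi^*, \xi, t)
    \leq \frac{\alpha_t}{K} \sum_{k=1}^K \E[M_t^{(k)}]
    \leq \frac{\alpha_t}{K} \cdot K t
    = t \alpha_t,
\end{align*}
because \emph{every} one of the $K$ streams contributes a term of size up to $t$ --- the GER event requires all streams to be null, so the sum cannot be restricted to a smaller set. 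The factor of $K$ in the e-d-GNT threshold is exactly cancelled by the $K$ terms in the sum, so your argument proves $\sup_{\xi}\, \sup_t\, \ger(\varphi^*, \xi, t)/(t\alpha_t) \leq 1$, not $\leq 1/K$. This is structurally different from \Cref{thm:univfdr} and \Cref{thm:univpfer}, where the sum runs only over $\mathcal{H}_0(t)$ and the comparison against $t\alpha_t \lvert \mathcal{H}_0(t) \rvert$ genuinely yields the constant $1/K$.

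In fairness, the paper's own proof terminates at the identical inequality $\ger(\varphi^*, \xi, t) \leq t\alpha_t$ and then asserts the theorem, so you have faithfully reproduced its argument; the mismatch is between that argument and the constant $1/K$ in the statement. Moreover, no argument can close this gap, because $1/K$ is false under the arbitrary cross-stream dependence the paper allows. Take all $K$ streams to observe the same data and carry the identical process $M_t^{(k)} = M$ for all $t \geq 1$, where $M$ is $\F_1$-measurable with $\P(M = 1/\alpha_1) = \alpha_1$ and $\P(M = 0) = 1 - \alpha_1$. Each such process is an e-detector under the global null, since $\E[M_\tau] = \E[M] = 1 \leq \E[\tau]$ for every stopping time $\tau \geq 1$. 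Yet at $t = 1$ the e-d-GNT event $\left\{ \sum_{k=1}^K M_1^{(k)} \geq K/\alpha_1 \right\}$ coincides with $\{ M \geq 1/\alpha_1 \}$, so $\ger(\varphi^*, \xi_G, 1) = \alpha_1 = t\alpha_t$ and the ratio equals $1$. The correct constant in \Cref{thm:univger} is therefore $1$; note that this still suffices for the corollary that follows it, since $\ger(\varphi^*, \xi, t) \leq t\alpha_t = \alpha$ under $\alpha_t = \alpha/t$, and the corollary's intermediate claim involving $\lvert \mathcal{H}_0(t) \rvert$ survives because the GER vanishes unless $\lvert \mathcal{H}_0(t) \rvert = K$.
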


\begin{proof}
    Fix a sequence $(\alpha_t)_{t=1}^\infty \in (0, 1)^\N$ and any fixed time $t \in \N$. Now, we can explicitly bound the GER at time $t$:
    \begin{align*}
        \ger(\varphi^*, \xi, t)
        & = \E\left[ \varphi_t^*\, \prod_{k=1}^K \indicator_{\xi^{(k)} > t} \right] \\
        & = \E\left[ \indicator_{\sum_{k=1}^K M_t^{(k)} \geq K / \alpha_t}\, \prod_{k=1}^K \indicator_{\xi^{(k)} > t} \right] \\
        & \leq \frac{\alpha_t}{K} \sum_{k=1}^K \E\left[ M_t^{(k)}\, \prod_{k=1}^K \indicator_{\xi^{(k)} > t} \right].
    \end{align*}
    Now, considering the cases $\{ \xi^{(k)} > t \}$ and $\{ \xi^{(k)} \leq t \}$, we have the inequality:
    \begin{align*}
        \E\left[ M_t^{(k)}\, \prod_{k=1}^K \indicator_{\xi^{(k)} > t} \right]
        \leq \E[ M_t^{(k)}\, \indicator_{\xi^{(k)} > t}] 
        \leq \E[ M_{t \wedge (\xi^{(k)} - 1)}^{(k)}].
    \end{align*}
    By the definition of the e-detector (because $t \wedge (\xi^{(k)} - 1)$ is a stopping time), we obtain:
    \begin{align*}
        \E[ M_{t \wedge (\xi^{(k)} - 1)}^{(k)}]
        \leq \E[t \wedge (\xi^{(k)} - 1)]
        \leq t.
    \end{align*}
    Putting the pieces together, we have the following inequality:
    \begin{align*}
        \ger(\varphi^*, \xi, t)
        \leq \frac{\alpha_t}{K} \sum_{k=1}^K \E[ M_\tau^{(k)}\, \indicator_{\xi^{(k)} > \tau}]
        \leq \frac{\alpha_t}{K} \sum_{k=1}^K \E[t \wedge (\xi^{(k)} - 1)]
        \leq t \alpha_t.
    \end{align*}
    Rearranging and taking a supremum over $t \in \N$ and configurations $\xi$ gives the desired result.
\end{proof}

\begin{corollary}
    Let $\mathcal{H}_0(t) = \{ k \in [K] : \xi^{(k)} > t \}$. Choosing $\alpha_t = \alpha / t$ for some $\alpha \in (0, 1)$ in \Cref{alg:edgnt} gives uniform GER control for all $t \in \N$:
    \begin{align*}
        \sup_{\xi \in \N^K}\, \sup_{t \in \N}\, \frac{\ger(\varphi^*, \xi, t)}{\lvert \mathcal{H}_0(t) \rvert} \leq \frac{\alpha}{K}.
    \end{align*}
    In particular, because $\sup_{t \in \N}\, \lvert \mathcal{H}_0(t) \rvert \leq K$, it follows that:
    \begin{align*}
        \sup_{\xi \in \N^K}\, \sup_{t \in \N}\, \ger(\varphi^*, \xi, t) \leq \alpha.
    \end{align*}
\end{corollary}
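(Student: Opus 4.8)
The plan is to obtain this corollary as a direct specialization of \Cref{thm:univger}, which already supplies the anytime bound $\sup_{\xi}\sup_t \ger(\varphi^*,\xi,t)/(t\alpha_t) \le 1/K$ for an arbitrary sequence $(\alpha_t)_{t=1}^\infty \in (0,1)^\N$. Choosing $\alpha_t = \alpha/t$ does nearly all of the work; the only point requiring genuine attention is the factor $\lvert \mathcal{H}_0(t)\rvert$ appearing in the denominator of the first display.

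First I would substitute $\alpha_t = \alpha/t$ into \Cref{thm:univger}. Because $t\alpha_t = \alpha$ for every $t \in \N$, the theorem immediately yields the uniform estimate $\ger(\varphi^*,\xi,t) \le \alpha/K$ for all $\xi \in \N^K$ and all $t \in \N$.

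Next I would upgrade this to the first display by exploiting the structure of the GER. Its definition pairs $\varphi_t^*$ with the product $\prod_{k=1}^K \indicator_{\xi^{(k)} > t}$ ranging over \emph{all} streams, which equals $1$ exactly when every stream is still null at time $t$ --- that is, when $\mathcal{H}_0(t) = [K]$ and $\lvert \mathcal{H}_0(t)\rvert = K$ --- and equals $0$ otherwise. Hence, for any fixed $\xi$ and $t$, either $\ger(\varphi^*,\xi,t) = 0$ (and the ratio is $0$, under the convention $0/0 = 0$), or $\lvert \mathcal{H}_0(t)\rvert = K \ge 1$ and
\begin{align*}
    \frac{\ger(\varphi^*,\xi,t)}{\lvert \mathcal{H}_0(t)\rvert} \le \ger(\varphi^*,\xi,t) \le \frac{\alpha}{K}.
\end{align*}
Taking the supremum over $\xi \in \N^K$ and $t \in \N$ gives the first display. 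The ``in particular'' claim then follows by rearranging: the first display gives $\ger(\varphi^*,\xi,t) \le (\alpha/K)\,\lvert \mathcal{H}_0(t)\rvert$, and since $\lvert \mathcal{H}_0(t)\rvert \le K$ uniformly in $t$, this is at most $\alpha$.

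The only obstacle is the bookkeeping around $\lvert \mathcal{H}_0(t)\rvert$: one must note explicitly that it can be taken to be at least $1$ precisely on the event where the GER is positive (with $0/0$ read as $0$ on the degenerate complement), so that dividing through by it never weakens the bound. No probabilistic content beyond \Cref{thm:univger} enters.
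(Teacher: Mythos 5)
Your proposal is correct and follows essentially the same route as the paper, which simply specializes \Cref{thm:univger} to $\alpha_t = \alpha/t$ (so that $t\alpha_t = \alpha$) and notes the resulting uniform bound. In fact you are slightly more careful than the paper: since \Cref{thm:univger} (unlike its FDR and PFER counterparts) has no $\lvert \mathcal{H}_0(t) \rvert$ factor in its denominator, the corollary's first display genuinely needs your observation that the GER vanishes unless every stream is still null (i.e., $\lvert \mathcal{H}_0(t) \rvert = K$), a point the paper's one-line proof leaves implicit.
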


\par This follows immediately from \Cref{thm:univger} by choosing $\alpha_t = \alpha / t$. Since this algorithm provides universal control over the GER, $\arl_1(\varphi^*) = \infty$ is forced due to \Cref{prop:eop-ger}.

\subsection{Controlling the worst-case GER with a bounded PFA}

\par By \Cref{prop:eop-ger}, we know that if the ARL is finite, then the worst-case GER (over stopping times $\tau$ and configurations $\xi$) is at least $\eta$. On the other hand, if we are willing to forfeit finiteness of the ARL, we can only hope to control the probability of false alarm (PFA) -- this is the probability that there is any false detection. Without loss of generality, change monitoring algorithms with finite PFA are constructed using e-processes (see \citet{ramdas2020admissible} for details). In particular, we can achieve GER control using e-d-GNT, except that each input process $M^{(k)}$ is an e-process for $\F$ (in addition to being an e-detector) for $k \in [K]$; recall that this means $\E_\P[M_\tau^{(k)}] \leq 1$ for all $\F$-stopping times $\tau$ and for all $\P \in \mathcal{P}_0$. Then, e-d-GNT with e-processes controls the GER uniformly over stopping times and configurations.

\begin{theorem}[e-d-GNT with e-processes controls the worst-case PFER]
    Suppose that $M^{(k)}$ is an e-process for $k \in [K]$ and $\alpha_t = \alpha > 0$ is a constant sequence. Then, \Cref{alg:edgnt} satisfies $\ger(\varphi) \leq \alpha$ and $\arl_1(\varphi) = \infty$. Furthermore, let $\mathcal{T}$ denote the set of stopping times for $\F$. Then, we obtain the following worst-case GER guarantee:
    \begin{align*}
        \sup_{\xi \in \N^K}\, \sup_{\tau \in \mathcal{T}}\, \ger(\varphi, \xi, \tau) \leq \alpha.
    \end{align*}
\end{theorem}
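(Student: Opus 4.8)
The plan is to mirror the structure of the e-d-Bonferroni analogue (the theorem ``e-d-Bonferroni with e-processes controls the worst-case PFER''), splitting the argument into three parts: the PFA bound, the worst-case GER bound, and the forced conclusion $\arl_1(\varphi^*) = \infty$.

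For the PFA bound, the key observation is that the e-d-GNT rejection rule $\sum_{k=1}^K M_t^{(k)} \geq K/\alpha$ is equivalent to $\bar M_t \geq 1/\alpha$, where $\bar M_t = \frac{1}{K} \sum_{k=1}^K M_t^{(k)}$ is the averaged process. Since each $M^{(k)}$ is an e-process for $\F$, the average $\bar M$ is again an e-process by linearity of expectation: for any $\F$-stopping time $\tau$ under the global null, $\E_\infty[\bar M_\tau] = \frac{1}{K} \sum_{k=1}^K \E_\infty[M_\tau^{(k)}] \leq 1$. Under the global null a false alarm occurs exactly when $\sup_{t \in \N}\, \bar M_t \geq 1/\alpha$, so Ville's inequality yields $\pfa(\varphi^*) = \P_\infty(\sup_{t \in \N}\, \bar M_t \geq 1/\alpha) \leq \alpha$, which is the claimed PFA guarantee.

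For the worst-case GER, I would follow the proof of \Cref{thm:edgnt-eop-ger} verbatim up through the bound $\ger(\varphi^*, \xi, \tau) \leq \frac{\alpha}{K} \sum_{k=1}^K \E[M^{(k)}_{\tau \wedge (\xi^{(k)} - 1)}]$, obtained by applying $\indicator_{x \geq 1} \leq x$ to $\varphi_\tau^*$, dropping the extra indicators $\prod_j \indicator_{\xi^{(j)} > \tau}$, and passing to the stopped process $\tau \wedge (\xi^{(k)} - 1)$. The only change is the final step: instead of invoking the e-detector property $\E[M^{(k)}_{\tau \wedge (\xi^{(k)} - 1)}] \leq \E[\tau \wedge (\xi^{(k)} - 1)]$, I use the stronger e-process property $\E[M^{(k)}_{\tau \wedge (\xi^{(k)} - 1)}] \leq 1$, which is valid precisely because $\tau \wedge (\xi^{(k)} - 1)$ is an $\F$-stopping time and $M^{(k)}$ is an e-process for the distributions with $\xi^{(k)} = \infty$. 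Summing over the $K$ streams then gives $\ger(\varphi^*, \xi, \tau) \leq \frac{\alpha}{K} \cdot K = \alpha$, uniformly over $\xi \in \N^K$ and $\tau \in \mathcal{T}$.

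Finally, since the GER is now controlled at level $\alpha < 1$ uniformly over all stopping times and configurations, the impossibility result \Cref{prop:eop-ger} immediately forces $\arl_1(\varphi^*) = \infty$. I do not anticipate any substantive obstacle; the one point that warrants care is verifying that the averaged process $\bar M$ inherits the e-process property at \emph{arbitrary} stopping times rather than merely at fixed times, which is exactly where the convex-combination argument together with linearity of expectation is needed, and it is also the step that makes the sum-based GNT rule (rather than a maximum-based rule) the natural choice for this clean PFA bound.
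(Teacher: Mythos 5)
Your proof is correct and takes essentially the same route as the paper's: Ville's inequality applied to the averaged process $\frac{1}{K}\sum_{k=1}^K M_t^{(k)}$ for the PFA bound, the argument of \Cref{thm:edgnt-eop-ger} with the e-process bound $\E[M^{(k)}_{\tau \wedge (\xi^{(k)}-1)}] \leq 1$ replacing the e-detector bound $\E[\tau \wedge (\xi^{(k)}-1)]$ for the worst-case GER, and \Cref{prop:eop-ger} forcing $\arl_1(\varphi^*) = \infty$. Your writeup is merely more explicit than the paper's terse sketch (in particular, spelling out why the average of e-processes is itself an e-process at arbitrary stopping times), but the content is identical.
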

\begin{proof}
    By Ville's inequality, we know that for $k \in [K]$ and $c > 0$:
    \begin{align*}
        \P\left( \sup_{t \in \N}\, \frac{1}{K} \sum_{k=1}^K M_t^{(k)} \geq \frac{1}{\alpha} \right) \leq \alpha.
    \end{align*}
    Hence, we deduce the PFA bound. Then, the GER bound can be obtained by following the proof of \Cref{thm:edgnt-eop-ger} but replacing the upper bound $\E[\tau]$ with the constant 1 (by the definition of an e-process). Finally, since e-d-GNT with e-processes controls GER uniformly over stopping times and configurations, \Cref{prop:eop-ger} forces $\arl_1(\varphi) = \infty$.
\end{proof}

Hence, if we allow the ARL to be infinite we obtain an algorithm (e-d-GNT with e-processes) which has bounded PFA and which controls the GER.

\section{A more detailed discussion about dependence} \label{sec:csdependence}

\par In this section, we discuss several scenarios in which our algorithms can (or cannot) be used to provide guarantees when there is dependence across the $K$ streams.

\subsection{Subtleties about filtrations}

\par As long as the e-detector for each stream is an e-detector with respect to the global filtration $\mathcal{F}$ (not just with respect to $\F^{(k)}$), then our theorems on EOP control hold. If $\F^{(k)}$ was unspecified, we had assumed it to be the natural filtration generated by the data. As an important special case, if the streams are independent, then an e-detector with respect to $\F^{(k)}$ is also an e-detector with respect to $\F$. Here, we ask the question: could this condition also hold under some types of dependence? Can we provide simple examples when this condition does or does not hold? 

\par Intuitively, our condition means that the aggregate data collected by time $t$ from one stream does not contain information from the future of any other stream. In particular, it is alright for the streams to depend on each other arbitrarily, as long as the dependence structure always remains in the past. For instance, if we are simultaneously monitoring data from a group of sensors, we must assume that all streams are synchronized in order to attain EOP control (no sensor can ``see into the future'' of any other).

\par However, although this stronger condition is required for EOP control, it is not required for our algorithms to be sensible. In fact, all of our theorems about universal Type I error control (\Cref{thm:univfdr}, \Cref{thm:univpfer}, \Cref{thm:univfwer}, and \Cref{thm:univger}) still hold without this condition. This is because fixed times are always stopping times, regardless of the choice of filtration. 

\subsection{Counterexample: when dependence breaks EOP control}

\par For example, here is a concrete scenario that is disallowed in all of our theorems about EOP (\Cref{thm:edbh-eop-fdr}, \Cref{thm:edbonf-eop-pfer}, \Cref{cor:edbonf-eop-fwer}, and \Cref{thm:edgnt-eop-ger}). Suppose that we are observing two streams $(X_t^{(1)})_{t=1}^\infty$ and $(X_t^{(2)})_{t=1}^\infty$, where $(X_t^{(1)})_{t=1}^\infty \sim \otimes_{t=1}^\infty\; \mathcal{N}(0, 1)$ and $X_t^{(2)} = X_{t+1}^{(1)}$ for $t \in \N$ (setting $\xi^{(k)} = \infty$ in all streams for simplicity). Furthermore, suppose for simplicity that $\F$ is the natural filtration generated by the data:
\begin{align*}
    \F_t = \sigma\left( \bigcup_{j=1}^t \, \{ X_j^{(1)},\, X_j^{(2)} \} \right).
\end{align*}
Then, suppose we construct some e-detector $M_t^{(1)}$ for the first data stream. Even if $M_t^{(1)}$ is an e-detector with respect to $(\F_t^{(1)})_{t=1}^\infty$, it is not necessarily an e-detector with respect to $\F = (\F_{t+1}^{(1)})_{t=1}^\infty$. Since the second stream can ``see into the future'' of the first stream, the global filtration is more refined than the filtration in the first stream. Therefore, there are more stopping times $\tau$ with respect to the global filtration $\F$ than $\F^{(1)}$ alone, and the e-detector property ($\E[M_\tau^{(k)}] \leq \E[\tau]$) will not necessarily hold for all stopping times $\tau$ with respect to $\F$.

\subsection{Positive examples: when dependence poses no threat}

\par Here, we provide nontrivial examples with dependence for which our theorems about EOP (\Cref{thm:edbh-eop-fdr}, \Cref{thm:edbonf-eop-pfer}, \Cref{cor:edbonf-eop-fwer}, and \Cref{thm:edgnt-eop-ger}) do hold. In both of these examples, assume for simplicity that $\F$ is the natural filtration generated by the data:
\begin{align*}
    \F_t = \sigma\left( \bigcup_{j=1}^t \, \{ X_j^{(1)},\, X_j^{(2)} \} \right).
\end{align*}

\subsubsection{Gaussian mean change with unknown covariance}

\par Suppose that we are observing $K = 2$ streams, which are such that $(X_t^{(1)},\, X_t^{(2)}) \given \F_{t-1} \sim \mathcal{N}(\mu_t,\, \Sigma_t)$. The pre-change class may be specified as, for example, $\mu_t \leq 0$, while the post-change class may be specified as $\mu_t > 0$ (the exact details of the mean are less important here, because the dependence is captured by $\Sigma_t$). Suppose that $\operatorname{diag}(\Sigma_t) = (\sigma_1^2,\, \sigma_2^2)$ is known but the off-diagonal entries are unknown; in particular, the two streams may be dependent. Then, for any $\lambda \geq 0$ and $k \in \{ 1, 2 \}$, the following is an e-detector with respect to the global filtration $\F$ (not just $\F^{(k)}$):
\begin{align*}
    M_t^{(k)} = \sum_{j = 1}^t \exp\left( \lambda \sum_{s=j}^t X_s^{(k)} - \lambda^2\, \frac{\sigma_k^2\, (t-j+1)}{2} \right).
\end{align*}
This is an SR e-detector (as defined in \Cref{sec:edetcd}), since each summand is the exponential supermartingale associated to the running sum started at time $j$; each summand is therefore a $j$-delay e-process. Despite the potentially unknown dependence between the streams, we can construct a nontrivial e-detector and apply our algorithms to achieve EOP control.

\subsubsection{Nonparametric symmetry change with dependence}

\par Suppose that we are observing $K = 2$ streams, which are such that $X_t^{(1)} \sim \mathcal{N}(0,\, 1)$  for all $t$ (meaning that the first stream is i.i.d.\ standard Gaussian), while
\begin{align*}
    (X_t^{(2)} \given \F_{t-1}) \sim \mathcal{N}(0,\, 1)\, \indicator_{X_t^{(1)} \leq 0} + \mathrm{Cauchy}(0,\, 1)\, \indicator_{X_t^{(1)} > 0}.
\end{align*}
The above only specifies the pre-change class. For a sensible post-change class, one can simply shift the observations to the right by a constant $\mu$ (as in the previous example).

\par Now, since the pre-change (joint) distribution is  symmetric (conditional on the past), the following is an e-process introduced in \citet{ramdas2020admissible} for $k \in \{ 1, 2 \}$ with respect to $\F$ (with $\Lambda_0 = 1$ and for any $\lambda \in [-1, 1]$):
\begin{align*}
    \Lambda_{t+1}^{(k)} = \Lambda_t^{(k)} + \lambda\, \operatorname{sign}(X_t^{(k)})\, \indicator_{X_t^{(k)} \neq 0}.
\end{align*}
More generally, for any odd function $h : \R \to \R$ (recall that $h$ is odd if $h(-x)=-h(x)$ for all $x \in \R$) such that $\sup_{x \in \R}\, \lvert h(x) \rvert \leq 1$, we can set:
\begin{align*}
    \Lambda_{t+1}^{(k)} = \Lambda_t^{(k)}\, (1 +  h(X_t^{(k))}).
\end{align*}
Thus, we construct $j$-delay e-processes $\Lambda^{(k, j)}_t$ in each stream $k \in [K]$; recall that these are simply e-processes started at time $j \in \N$ (remaining at 0 until time $j$). We can then define the SR e-detector by:
\begin{align*}
    M_t^{(k)} = \sum_{j=1}^t \Lambda_t^{(k, j)}.
\end{align*}
Again, despite the dependence between the streams, we can instantiate a nontrivial e-detector and run our algorithms to achieve EOP control. Next, we will empirically verify our results using simulations.

\section{Simulations}

\par We run three distinct simulations, using our algorithms for multi-stream change detection in three settings: parametric Gaussian mean change, nonparametric symmetry testing, and nonparametric independence testing. Through these simulations, we verify that our algorithms control the EOP, and that detection delay does not suffer significantly through the use of these algorithms. All code for these simulations, including generic implementations of all relevant algorithms, can be found in the following GitHub repository:
\begin{center}
\texttt{\href{https://www.github.com/sanjitdp/multistream-change-detectors}{https://www.github.com/sanjitdp/multistream-change-detectors}}.
\end{center}

\subsection{Gaussian mean change}

\par Consider a simple parametric change detection problem with $\mathcal{P}_0^{(k)} = \left\{ \otimes_{t=1}^\infty\; \mathcal{N}(-\delta,\, 1) \right\}$ and $\mathcal{P}_1^{(k)} = \{ \otimes_{t=1}^\infty\; \mathcal{N}(\delta,\, 1) \}$ for streams $k \in [K] = 50$ and a signal strength parameter $\delta > 0$; for now, let $\delta = 1$. In this case, since the likelihood ratio is well-defined, we can define the Shiryaev-Roberts (SR) e-detector (recall that $M_0 = 0$):
\begin{align*}
    M_{t+1}^{(k)} = \exp\left( -\frac{1}{2} \left( (X_{t+1}^{(k)} - \delta)^2 - (X_{t+1}^{(k)} + \delta)^2 \right) \right) (M_t^{(k)} + 1).
\end{align*}
Note that we could have also used the CUSUM e-detector to arrive at very similar results (recall that $M_0 = 0$):
\begin{align*}
    M_{t+1}^{(k)} = \exp\left( -\frac{1}{2} \left( (X_{t+1}^{(k)} - \delta)^2 - (X_{t+1}^{(k)} + \delta)^2 \right) \right) \max\{ M_t^{(k)}, 1 \}.
\end{align*}

\par In the following experiments, we consider the SR e-detector. For some intuition on the SR e-detector, here is a plot of the e-detectors in each stream if the change occurs at time $\xi^{(k)} = 200$ for streams $k \in [K] = 1000$:
\image{0.5}{images/meanchange-detectors}{SR e-detectors over time for detecting a Gaussian mean change at time 200 (y-axis is on a logarithmic scale).}

\par Now, suppose that $\xi^{(k)} = \infty$ for $k \in [K] = 1000$, so that we are in the scenario of the global null. A naive change detection algorithm would declare a change in stream $k$ when $M_t^{(k)} \geq 1 / \alpha$. We find that e-d-BH (setting $\alpha_t = \alpha = 0.001$) keeps the FDR at a much lower level than the naive algorithm, even in the long-run:
\imagesbs{0.4}{images/naive-fdr-meanchange}{naive algorithm}{images/edbh-fdr-meanchange}{e-d-BH}{FDR of the naive algorithm and e-d-BH for Gaussian mean change.}

\par We know from \Cref{thm:edbh-eop-fdr} that e-d-BH theoretically keeps $\eop_\fdr(\varphi)$ controlled at level $\alpha$; in fact, we observe empirically that this translates to nontrivial control on the FDR. Similarly, although the naive algorithm provides almost no control over the FWER, e-d-Holm (setting $\alpha_t = \alpha = 0.001$) is able to keep the FWER non-trivially bounded away from 1:
\imagesbs{0.4}{images/naive-fwer-meanchange}{naive algorithm}{images/edholm-fwer-meanchange}{e-d-Holm}{FWER of the naive algorithm and e-d-Holm for Gaussian mean change (note the different y-axis scales).}

\par Finally, setting $\beta_t = \beta = 10$ we see empirically that e-d-Bonferroni keeps the PFER controlled at a much lower level than the naive algorithm:
\imagesbs{0.4}{images/naive-pfer-meanchange}{naive algorithm}{images/edbonf-pfer-meanchange}{e-d-Bonferroni}{PFER of the naive algorithm and e-d-Bonferroni for Gaussian mean change ($\beta_t = \beta = 10$).}

\par Next, suppose $\xi^{(k)} = 10$ for $k \in [K] = 1000$ and we set $\alpha_t = \alpha = 0.001$. We run the naive algorithm, e-d-BH, and e-d-Bonferroni 100 times, tracking the number of detections that they make at each timestep. Here, we plot the mean number of detections made at each timestep (across the 100 runs of the algorithm) for varying values of $\delta$:
\imagesbsn{0.4}{images/meandetections-meanchange-ss-0.25}{$\delta = 0.25$}{images/meandetections-meanchange-ss-0.5}{$\delta = 0.5$}
\imagesbs{0.4}{images/meandetections-meanchange-ss-1}{$\delta = 1$}{images/meandetections-meanchange-ss-2}{$\delta = 2$}{Mean detections of e-d-BH, e-d-Bonferroni, and the naive algorithm over time for a Gaussian mean change (with varying signal strength).}

\par Hence, we see that our algorithms do not increase the detection delay too much, even compared to the naive algorithm. Despite the very small choice of $\alpha = 0.001$, we can see that our algorithms still only have a very small detection delay. Note that e-d-Bonferroni is more conservative than e-d-BH, which in turn is more conservative than the naive algorithm. As expected, the detection delay of all three methods decreases as the signal strength is increased.

\par Next, suppose that $\xi^{(k)} = 400$ only for streams $1 \leq k \leq 100$ and $\xi^{(k)} = \infty$ for streams $100 < k \leq K = 1000$; in particular, changes only occur in 100 streams. Suppose also that the signal strength is $\delta = 0.125$. Again, we plot the mean number of detections made at each timestep (across 5 runs of the algorithms, setting $\alpha_t = \alpha = 0.001$):
\image{0.5}{images/meandetections-meanchange-subset-ss-0.125}{Mean detections of e-d-BH, e-d-Bonferroni, and the naive algorithm over time for a Gaussian mean change, with only 100 streams changing.}

\par Hence, in exchange for the conservatism of e-d-BH and e-d-Bonferroni, we make significantly fewer false detections. Note that there is no stipulation that all changes must happen at the same time. In fact, when changes occur at different times, we sometimes observe a phenomenon called \textit{piggybacking of evidence}, described in \Cref{app:piggybacking}.

\subsection{Nonparametric symmetry testing}
\label{sec:symmetrysims}

\par Consider a nonparametric change detection problem where we would like to detect a change in symmetricity, under the simplifying assumption that the observations are i.i.d. across time and independent across $K = 50$ streams. Recall that $\mathcal{M}(S)$ denotes the set of probability measures over $S$. Then, we can choose $\mathcal{P}_0^{(k)}$ as follows:
\begin{align*}
    \mathcal{P}_0^{(k)} = \left\{ \otimes_{t=1}^\infty\; \P_0^{(k)} :\, \P_0^{(k)} \in \mathcal{M}(\R) \text{ is symmetric} \right\}.
\end{align*}
On the other hand, we can choose $\mathcal{P}_1^{(k)}$ as follows:
\begin{align*}
    \mathcal{P}_1^{(k)} = \left\{ \otimes_{t=1}^\infty\; \P_1^{(k)} :\, \P_1^{(k)} \in \mathcal{M}(\R) \text{ is asymmetric} \right\}.
\end{align*}
Here, \citet{ramdas2020admissible} introduces a simple e-process for testing symmetry (with $M_1 = 1$):
\begin{align*}
    E_{t+1}^{(k)} = E_t^{(k)} + \operatorname{sign}(X_{t+1}^{(k)})\, \indicator_{X_{t+1}^{(k)} \neq 0}.
\end{align*}
Then, we construct $j$-delay e-processes $\Lambda^{(k, j)}_t$ in each stream $k \in [K]$; recall that these are simply e-processes started at time $j \in \N$ (remaining at 0 until time $j$). We can then define the SR e-detector by:
\begin{align*}
    M_t^{(k)} = \sum_{j=1}^t \Lambda_t^{(k, j)}.
\end{align*}
Similarly, we could have defined the CUSUM e-detector by:
\begin{align*}
    M_t^{(k)} = \max_{1 \leq j \leq t}\, \Lambda_t^{(k, j)}.
\end{align*}

\par In this simulation, we will consider the SR e-detector. For this simulation, we consider the pre-change distribution $\mathcal{N}(0, 1)$ and the post-change distribution $\mathcal{N}(1, 1)$ in all streams. Suppose that $\xi^{(k)} = 400$ for $k \in [K] = 50$; we plot the e-detectors for testing symmetry here:
\image{0.5}{images/symmetry-detectors}{SR e-detectors over time for detecting a symmetry change at time 400.}

\par Next, we set $\alpha_t = \alpha = 0.001$. We run the naive algorithm, e-d-BH, and e-d-Holm 100 times, tracking the number of detections that they make at each timestep. Here, we plot the mean number of detections made at each timestep (across the 100 runs of the algorithm):
\image{0.5}{images/meandetections-symmetry}{Mean detections of e-d-BH, e-d-Holm, and the naive algorithm over time for a symmetry change.}

\par Note that we would expect the detection delay to be larger for symmetry testing than for Gaussian mean change detection, since nonparametric problems are generally much harder. Still, our algorithms do not increase the detection delay very much.

\par On the other hand, suppose that we only have $\xi^{(k)} = 400$ for streams $1 \leq k \leq 10$ but $\xi^{(k)} = \infty$ for streams $10 < k \leq K = 50$; in particular, there are only changes in 10 streams. Then, we obtain the following plot for the mean number of detections made at each timestep (across 100 runs of the algorithms, setting $\alpha_t = \alpha = 0.001$):
\image{0.5}{images/meandetections-subset-symmetry}{Mean detections of e-d-BH, e-d-Holm, and the naive algorithm over time for a symmetry change (with only 10 streams changing).}

\par Again, we see that in exchange for their conservatism, e-d-BH and e-d-Holm make far fewer false detections than the naive algorithm.

\subsection{Nonparametric conformal change detection}

\par Finally, suppose that we want to detect a change, but we do not want to exactly specify the pre-change and post-change classes of distributions. Instead, suppose that we want to detect changes of the following form. Before the change, $X_t^{(k)}$ is drawn independently (across time) from some distribution $F$, and after the change, $X_t^{(k)}$ is drawn independently (across time) from some other distribution $G \neq F$. Here, $F$ and $G$ are left unspecified, other than the fact that they are not equal. We now construct e-detectors for this problem using the tools of conformal prediction, as in \citet{vovk2021testing}.

\par Recall that $\mathcal{M}(S)$ denotes the set of probability measures over $S$ and $\overset{d}{=}$ denotes equality in distribution. Furthermore, note that a \emph{permutation} is a bijection from a set to itself. Then, we can choose $\mathcal{P}_0^{(k)}$ very generally as follows:
\begin{align*}
    \mathcal{P}_0^{(k)} = \left\{ \P_0^{(k)} \in \mathcal{M}(\R^\N) : (X_t)_{t=1}^\infty \overset{d}{=} (X_{\pi(t)})_{t=1}^\infty\;\; \forall\, \text{permutations } \pi : \N \to \N \text{ if } (X_t)_{t=1}^\infty \sim \P_0^{(k)} \right\}.
\end{align*}
In particular, $\mathcal{P}_0^{(k)}$ is the set of all \emph{exchangeable} distributions on $\R^\N$. On the other hand, the post-change distribution only needs to be such that the true distribution $\P^{(k)}$ in the $k$th stream is not exchangeable whenever $\xi^{(k)} < \infty$.

\par The goal here is to create general e-detectors which can detect a change from exchangeability. We construct the e-detector in stream $k \in [K]$ as follows. First, we choose a set of (measurable) nonconformity measures $A_n^{(k)} : \R^n \to \R^n$ for $n \in \N$. Each nonconformity measure takes in $(X_t^{(k)})_{t=1}^n$ and must be invariant to all permutations $\pi$ in the following sense:
\begin{align*}
    A_n(X_1^{(k)}, \dots, X_n^{(k)}) = (\alpha_1, \dots, \alpha_n) \implies A_n\left( X_{\pi(1)}^{(k)}, \dots, X_{\pi(n)}^{(k)} \right) = (\alpha_{\pi(1)}, \dots, \alpha_{\pi(n)}).
\end{align*}
Intuitively, the \emph{nonconformity score} $\alpha_t$ measures ``abnormal'' $x_t$ is (using the list $(x_t)_{t=1}^n$ as a reference). The choice of nonconformity measures will make it easier to detect certain kinds of deviations from exchangeability, depending on the behavior of $A$ after the change. For example, for this simulation, we use the following set of nonconformity measures:
\begin{align*}
    A_n(x_1, \dots, x_n) = x_n - \frac{1}{n} \sum_{t=1}^n x_t.
\end{align*}
These nonconformity measures are designed to detect a positive change in the mean. For more examples of nonconformity measures, see \citet{vovk2021testing}. Then, we use the nonconformity measures to construct p-values for $1 \leq t \leq n$ as follows, where $\theta_n \sim \mathrm{Unif}(0, 1)$ are i.i.d.:
\begin{align*}
    p_n = \frac{\lvert \{ t \in [n] : \alpha_t > \alpha_n \} \rvert + \theta_n \lvert \{ t \in [n] : \alpha_t = \alpha_n \} \rvert}{n}.
\end{align*}

\par Next, we define a \emph{p-to-e calibrator}\footnote{\citet{vovk2021testing} calls these \emph{betting functions} instead of p-to-e calibrators.}, which is any function $f : [0, 1] \to \R$ satisfying:
\begin{align*}
    \int_0^1 f(z)\, dz \leq 1.
\end{align*}
Note that $f(z) = \kappa z^{\kappa-1}$ is a valid choice for any $\kappa \in (0, 1)$; for our simulation, we choose $f(z) = 1 / (2 \sqrt{z})$, corresponding to the case $\kappa = 1/2$. Finally, we define an e-process for $\mathcal{P}_0$ as follows:
\begin{align*}
    \Lambda_t^{(k, j)} = \prod_{s=j}^t f_s(p_s).
\end{align*}
In particular, we can construct the corresponding conformal SR e-detector:
\begin{align*}
    M_t^{(k)} = \sum_{j=1}^t \Lambda_t^{(k, j)} = \sum_{j=1}^t \prod_{s=j}^t f_s(p_s).
\end{align*}
Since $M_0^{(k)} = 0$ by definition, we can compute the SR e-detector recursively:
\begin{align*}
    M_{t+1}^{(k)} = f_t(p_t)\, (M_t^{(k)} + 1).
\end{align*}
This allows the conformal SR e-detector to be computed efficiently. Note that we could have constructed the CUSUM e-detector by $M_t^{(k)} = \max_{1 \leq j \leq t}\, \Lambda_t^{(k, j)}$ and arrived at very similar results.

\par For our simulations, we choose the pre-change distribution $\mathcal{N}(0, 1)$ and the post-change observations $\mathcal{N}(1, 1)$ in each stream (where the observations are drawn independently across time. However, note that the conformal e-detector does not make any parametric assumptions about these distributions. However, note that our chosen nonconformity measure is particularly suited to detect positive changes in the mean, and that we assume the pre-change observations are exchangeable. Further, suppose that $\xi^{(k)} = 400$ for $k \in [K] = 50$; we now plot the conformal e-detectors:

\image{0.5}{images/conformal-detectors}{Conformal e-detectors over time for detecting a deviation from exchangeability at time 400.}

\par Note that the evidence for non-exchangeability increases soon after the change happens, but the amount of evidence begins decreasing after this; as more data is collected according to the post-change distribution, it becomes less evident that the data seen so far is not exchangeable. Next, we set $\alpha_t = \alpha = 0.001$. We run the naive algorithm, e-d-BH, and e-d-Bonferroni 100 times, tracking the number of detections that they make at each timestep. Here, we plot the mean number of detections made at each timestep (across the 100 runs of the algorithm):

\image{0.5}{images/meandetections-conformal}{Mean detections of e-d-BH, e-d-Bonferroni, and the naive algorithm over time for non-exchangeability.}

\par Recall that the number of detections increases, then decreases because as more data is collected after the change, it becomes less evident that the data seen thus far is not exchangeable. Note that all three algorithms have very little detection delay (despite their lack of assumptions), and that our algorithms do not increase the detection delay of the naive algorithm very much. Therefore, we see that e-detectors are applicable even in nonparametric and assumption-light settings using the tools of conformal prediction. Hence, in a wide variety of settings, we see that our algorithms provide Type I error control without a significant increase in detection delay.

\section{Conclusion}

\par We showed that it is impossible to have a nontrivial worst-case Type I error when the ARL is finite, and put forth that the error over patience (EOP) is a sensible metric to control. Furthermore, we presented novel algorithms to provide this control uniformly over stopping times using e-detectors and the e-BH procedure. Our e-d-BH, e-d-Bonferroni, and e-d-Holm procedures can be considered an extension of the Benjamini-Hochberg, Bonferroni, and Holm procedures used in multiple testing to the setting of change detection.

\subsection*{Acknowledgments}

We would like to thank Neil Xu and Yiming Xing for their invaluable feedback on this work.

%------------------------------------------------------------
% BIBLIOGRAPHY
%------------------------------------------------------------
\nocite{*}
\printbibliography

%------------------------------------------------------------
% APPENDICES
%------------------------------------------------------------
\newpage
\begin{appendices}
\crefalias{section}{appendix}

\section{Is the EOP metric too stringent?}
\label{app:adaptive}

\par Sometimes, the EOP metric may be too stringent for a particular application. For instance, one may wish to loosely (but nontrivially) control the Type I error, but still focus primarily on minimizing detection delay. In this case, in all of our algorithms (\Cref{alg:edbh}, \Cref{alg:edbonf}, \Cref{alg:edholm}, and \Cref{alg:edgnt}), a threshold of $1/\alpha$ may be too conservative to use in practice. Instead, one may choose a different threshold $1 < c_\alpha < 1/\alpha$, which is chosen to empirically control the ARL at level $1/\alpha$.

\par Even if the threshold is chosen empirically this way, all of our theorems about EOP control (\Cref{thm:edbh-eop-fdr}, \Cref{thm:edbonf-eop-pfer}, \Cref{cor:edbonf-eop-fwer}, and \Cref{thm:edgnt-eop-ger}) and universal Type I error control (\Cref{thm:univfdr}, \Cref{thm:univpfer}, \Cref{thm:univfwer}, and \Cref{thm:univger}) still hold with $1/c_\alpha$ in place of $\alpha$; the proofs work essentially without modification. Furthermore, we obtain (empirically) that under the global null, e-d-BH with an adaptive threshold $c_\alpha$ still controls the ARL:
\begin{align*}
    \arl_1(\varphi) = \E_\infty[\tau^*_1(\varphi)] \geq \frac{1}{\alpha}.
\end{align*}
In fact, if $\xi \in \N^K$ is any configuration, we have:
\begin{align*}
    \arl_1(\varphi, \xi) = \E_\infty[\tau^*_1(\varphi, \xi)] \geq \frac{1}{\alpha}.
\end{align*}

\par The same is true of e-d-Bonferroni and e-d-GNT, although we have only stated the result for e-d-BH here. Note that lowering the threshold can only hurt the Type I error and the ARL, so we would expect the EOP to be larger. In practice, however, this may be an acceptable price to pay for a smaller detection delay.

\section{Piggybacking of evidence by e-d-BH}
\label{app:piggybacking}

\par In this section, we describe a phenomenon surrounding e-d-BH, which we call \textit{piggybacking of evidence}. In particular, suppose that detections are consistently being made for some subset of streams which have changed. Then, suppose that there are changes in some second (disjoint) subset of streams. In this case, the detection delays of e-d-BH for the second subset of streams will be smaller than the detection delays for the first subset of streams. This happens because the thresholds for future detections in e-d-BH are lower, due to the existing detections.

\par To observe this empirically, suppose that we are performing nonparametric symmetry testing using the e-detector described in \Cref{sec:symmetrysims}. Consider the pre-change distribution class $\mathcal{P}_0^{(k)} = \left\{ \otimes_{t=1}^\infty\; \mathcal{N}(0,\, 1) \right\}$ and the post-change distribution class $\mathcal{P}_1^{(k)} = \{ \otimes_{t=1}^\infty\; \mathcal{N}(1,\, 1) \}$ for streams $k \in [K] = 50$. Further, suppose that $\xi^{(k)} = 100$ for streams $1 \leq k \leq 10$, $\xi^{(k)} = 500$ for streams $10 < k \leq 20$, and $\xi^{(k)} = \infty$ for streams $20 < k \leq K = 50$. Then, we run e-d-BH (setting $\alpha_t = \alpha = 0.0002$) 50 times, plotting the mean number of detections at each timestep:
\image{0.5}{images/meandetections-piggybacking-symmetry}{Mean detections of e-d-BH over time for symmetry changes at times $t = 100$ and $t = 500$.}

\par It takes 340 timesteps for e-d-BH to consistently detect all changes after the first subset of streams change at $t = 100$. However, it takes only 247 timesteps for e-d-BH to consistently detect all changes after the second subset of streams change at $t = 500$ (due to the decreased threshold for detection). We call this piggybacking of evidence, since existing evidence for changes reduces the amount of evidence required to declare future changes.

\section{Simulation: nonparametric independence testing}
\label{app:indepsim}

\par For this simulation, we consider a change detection problem where we would like to detect a change in the independence between coordinates of a pair, under the simplifying assumption that the observations are i.i.d. across time and independent across $K = 50$ streams (each stream being a pair of potentially dependent random variables). Recall that $\mathcal{M}(S)$ denotes the set of probability measures over $S$. Then, we can choose $\mathcal{P}_0^{(k)}$ as follows:
\begin{align*}
    \mathcal{P}_0^{(k)} = \left\{ \otimes_{t=1}^\infty\; (\P_0^{(k, 1)} \times \P_0^{(k, 2)}) :\, \P_0^{(k, 1)},\, \P_0^{(k, 2)} \in \mathcal{M}(\R) \right\}.
\end{align*}
On the other hand, we can choose $\mathcal{P}_1^{(k)}$ as follows:
\begin{align*}
    \mathcal{P}_1^{(k)} = \left\{ \otimes_{t=1}^\infty\; \P_1^{(k)} :\, \P_1^{(k)} \in \mathcal{M}(\R^2) \text{ cannot be written as a product} \right\}.
\end{align*}

\par In particular, the goal here is to create an e-detector to sequentially test for independence between the coordinates of a tuple, which is generally much harder than Gaussian mean change detection or symmetry testing. Here, we use the Hilbert-Schmidt independence criterion (HSIC) based sequential kernelized independence test (SKIT) using an online Newton step (ONS); a full description of how to build e-detectors for sequential kernelized independence testing (SKIT) can be found in \citet{podkopaev2023sequential}.

\par Suppose that $(a_t^{(k)}, b_t^{(k)}) \sim \mathcal{N}(0, I_2)$ are independent for $t \in \N$ and $k \in [K] = 50$. For our simulations, we choose the pre-change observations $X_t^{(k)} = (a_t^{(k)},\, b_t^{(k)})$ and the post-change observations $X_t^{(k)} = \left( a_t^{(k)},\, \frac{1}{2} (a_t^{(k)} + b_t^{(k)}) \right)$ in each stream. In particular, the first coordinate is mixed into the second coordinate after the change occurs. Further, suppose that $\xi^{(k)} = 400$ for $k \in [K] = 50$; we now plot the e-detectors for testing independence:
\image{0.5}{images/independence-detectors}{e-detectors over time for detecting an independence change at time 400.}

\par Next, we set $\alpha_t = \alpha = 0.001$. We run the naive algorithm, e-d-BH, and e-d-Holm 100 times, tracking the number of detections that they make at each timestep. Here, we plot the mean number of detections made at each timestep (across the 100 runs of the algorithm):
\image{0.5}{images/meandetections-independence}{Mean detections of e-d-BH, e-d-Bonferroni, and the naive algorithm over time for an independence change.}

\par As expected, the detection delay is larger for independence testing than for either Gaussian mean change detection or symmetry testing because independence testing is the hardest problem of the three. The reason for including this example is to demonstrate the applicability of e-detectors even in nonparametric settings where traditional methods like CUSUM and SR cannot be applied.

\section{Relating the CCD and ARL}
\label{app:ccdarl}

\par Suppose we have a change monitoring algorithm $\varphi$. Recall that $\arl_\eta(\varphi)$ is the expected first time that changes are declared in at least $\eta$ streams.

\definition[Changes correctly detected at time $t$ ($\ccd(\varphi, \xi, t)$)]{For a given sequential change monitoring algorithm $\varphi$ and configuration $\xi$, the \textit{changes correctly detected at time $t$} ($\ccd(\varphi, \xi, t)$) measures the expected ratio of changes correctly detected at time $t$ to the total number of changes that have happened at time $t$:
\begin{align*}
    \ccd(\varphi, \xi, t)
    = \E\left[ \frac{\sum_{k=1}^K \varphi_t^{(k)} \, \indicator_{\xi^{(k)} < t + 1}}{\left( \sum_{k=1}^K \indicator_{\xi^{(k)} < t + 1} \right) \vee 1} \right].
\end{align*}}

\smallskip

\par Just as the FDR is related to the expected first declaration time of a change ($\arl_1(\varphi)$), the CCD is related to the expected first time at which all $K$ streams declare a change ($\arl_K(\varphi)$). This is not a surprising result; when a change monitoring stream declares changes in all streams, the CCD is automatically 1.

\begin{proposition} \label{prop:ccdcpd}
    Let $\mathcal{H}_0(t) = \{ k \in [K] : \xi^{(k)} \geq t + 1 \}$ denote the set of streams in which a change has not yet happened by time $t$ and let $\mathcal{H}_1(t) = \{ 1, 2, \dots, K \} \setminus \mathcal{H}_0(t)$. Then if $\arl_K(\varphi) < \infty$, we have:
    \begin{align*}
        \inf_{\xi \in \N^K}\; \P\left( \bigcup_{1 \leq t \leq (1 + \alpha)\, \arl_K(\varphi)}\, \{ \ccd(\varphi, \xi, t) = 1 \} \right) \geq \frac{\alpha}{1 + \alpha}. 
    \end{align*}
\end{proposition}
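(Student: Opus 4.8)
The plan is to mirror the proof of \Cref{thm:eop-fdr}, replacing the ``first detection'' time $\tau^*_1(\varphi)$ from \Cref{eq:taustar} with the ``all streams detected'' time $\tau^*_K(\varphi)$, and dualizing the key observation: there, \emph{any} detection under the global null forces $\fdp = 1$; here, detecting \emph{every} stream forces the CCD proportion to equal $1$. Concretely, I would first record the deterministic fact that at any time $t$ with $\sum_{k=1}^K \varphi_t^{(k)} = K$, every stream is declared, so in particular every stream in $\mathcal{H}_1(t)$ is declared. The numerator of the CCD proportion then equals $\sum_{k=1}^K \indicator_{\xi^{(k)} < t+1} = \lvert \mathcal{H}_1(t) \rvert$, which matches the denominator $\lvert \mathcal{H}_1(t) \rvert \vee 1$ whenever at least one change has occurred by time $t$, yielding a CCD proportion of $1$. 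Thus the event $\{\tau^*_K(\varphi) \le (1+\alpha)\,\arl_K(\varphi)\}$ is contained in $\bigcup_{1 \le t \le (1+\alpha)\,\arl_K(\varphi)} \{\ccd(\varphi,\xi,t) = 1\}$.

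Second, I would apply Markov's inequality to the nonnegative random variable $\tau^*_K(\varphi)$ exactly as in \Cref{thm:eop-fdr}: since $\E_\infty[\tau^*_K(\varphi)] = \arl_K(\varphi)$, we obtain $\P(\tau^*_K(\varphi) \ge (1+\alpha)\,\arl_K(\varphi)) \le \tfrac{1}{1+\alpha}$, and hence $\P(\tau^*_K(\varphi) \le (1+\alpha)\,\arl_K(\varphi)) \ge \tfrac{\alpha}{1+\alpha}$. Combining this lower bound with the containment from the previous step, and taking an infimum over configurations $\xi$, would yield the stated inequality.

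The main obstacle is a measure-transfer subtlety buried in the Markov step: $\arl_K(\varphi) = \E_\infty[\tau^*_K(\varphi)]$ is defined under the global null, whereas the probability in the statement is taken under the data-generating measure for the configuration $\xi$. To push the Markov bound through under $\P_\xi$ one needs $\E_\xi[\tau^*_K(\varphi)] \le \arl_K(\varphi)$, i.e.\ that introducing genuine changes can only hasten the time at which all $K$ streams are declared; I would invoke this monotonicity (which is automatic for the e-detector-based procedures, since post-change signal only inflates each $M_t^{(k)}$ and thus each declaration threshold crossing) or, absent a general argument, read the bound under the global-null measure as in \Cref{thm:eop-fdr}. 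A second, minor issue is the degenerate configuration in which no change ever occurs, so that $\mathcal{H}_1(t) = \emptyset$ and the CCD proportion is $0$ rather than $1$ even when all streams are declared; this case (and more generally any time before the first true change) must be excluded from or handled separately in the infimum, since the CCD is only meaningful once at least one true change has occurred.
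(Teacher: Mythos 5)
Your proposal is correct and follows essentially the same route as the paper's proof: Markov's inequality applied to $\tau^*_K(\varphi)$, combined with the observation that the realized CCD proportion equals $1$ at any time when all $K$ streams are declared, so that $\{\tau^*_K(\varphi) \le (1+\alpha)\,\arl_K(\varphi)\}$ is contained in the union appearing in the statement.

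The two obstacles you flag are genuine, and it is worth recording that the paper's own proof silently ignores both of them. First, the paper applies Markov's inequality under the configuration-dependent measure $\P$ while using the mean $\arl_K(\varphi)=\E_\infty[\tau^*_K(\varphi)]$ computed under the global null; this is exactly the measure-transfer gap you identify, and it requires $\E[\tau^*_K(\varphi)]\le\arl_K(\varphi)$ under the actual configuration. Your proposed fix via monotonicity is heuristic rather than automatic: the proposition is stated for an \emph{arbitrary} monitoring algorithm $\varphi$, and nothing forces true changes to hasten declarations (even an e-detector can grow more slowly when the realized post-change distribution is poorly matched to the anticipated post-change class). Your alternative fallback --- reading the bound under $\P_\infty$ --- proves a different statement here, because unlike \Cref{thm:eop-fdr}, where a supremum over $\xi$ is bounded below by its global-null instance, the present claim is an \emph{infimum} over $\xi$, so one cannot simply pass to a favorable configuration. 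Second, the degenerate case you note actually falsifies the proposition as literally stated: for a configuration $\xi\in\N^K$ with every $\xi^{(k)}>(1+\alpha)\,\arl_K(\varphi)$, the realized CCD proportion is $0$ at every time in the window, the union event is empty, and the left-hand side is $0$. So the infimum must indeed be restricted (for instance, to configurations with at least one change inside the window), and the paper's closing assertion that $\ccd(\varphi,\xi,\tau^*_K(\varphi))=1$ is valid only on the event $\mathcal{H}_1(\tau^*_K(\varphi))\neq\emptyset$. In short, your attempt matches the paper's argument and is, if anything, more careful than the paper's own proof.
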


\begin{proof}
    Let $\xi$ be any configuration and let $\tau^*_K(\varphi)$ denote the first time that changes are declared in all $K$ streams. Then, we have for all $\alpha > 0$:
    \begin{align*}
        \P\left( \tau^*_K(\varphi) \geq (1 + \alpha)\, \arl_K(\varphi) \right) \leq \frac{1}{1 + \alpha}.
    \end{align*}
    So with probability at least $\alpha / (1 + \alpha)$, we have $\tau^*_K(\varphi) \leq (1 + \alpha)\, \arl_K(\varphi)$. The result now follows because $\ccd(\varphi, \xi, \tau^*_K(\varphi)) = 1$.
\end{proof}

\begin{corollary}
    In the setting of \Cref{prop:ccdcpd}, having $\arl_K(\varphi) < \infty$ implies that $\sup_{t \in \N}\, \ccd(\varphi, \xi, t) = 1$.
\end{corollary}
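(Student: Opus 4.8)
The plan is to deduce the corollary directly from \Cref{prop:ccdcpd} by sending $\alpha \to \infty$ and appealing to continuity of measure. Throughout, I would read $\sup_{t \in \N}\, \ccd(\varphi, \xi, t)$ as the pathwise supremum of the random changes-correctly-detected \emph{proportion} $P_t$ defined by
$$P_t = \frac{\sum_{k=1}^K \varphi_t^{(k)}\, \indicator_{\xi^{(k)} < t+1}}{\left( \sum_{k=1}^K \indicator_{\xi^{(k)} < t+1} \right) \vee 1},$$
rather than as its expectation; this is the quantity that \Cref{prop:ccdcpd} actually controls (its proof evaluates $\ccd$ at the random time $\tau^*_K(\varphi)$), and it is the only reading under which the claim holds in general.

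First I would dispatch the easy direction. Since each $\varphi_t^{(k)} \in \{0,1\}$, the numerator of $P_t$ never exceeds $\sum_{k=1}^K \indicator_{\xi^{(k)} < t+1}$, which is in turn at most the denominator; hence $P_t \leq 1$ for every $t$, so $\sup_{t \in \N} P_t \leq 1$ surely. For the reverse inequality I would invoke \Cref{prop:ccdcpd}, which gives, for every $\alpha > 0$,
$$\P\left( \bigcup_{1 \leq t \leq (1+\alpha)\, \arl_K(\varphi)} \{ P_t = 1 \} \right) \geq \frac{\alpha}{1+\alpha}.$$
As $\alpha$ increases, the windows $[1, (1+\alpha)\arl_K(\varphi)]$ are nested and exhaust $\N$, so the events on the left form an increasing family whose union is $\bigcup_{t \geq 1} \{ P_t = 1 \}$. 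By continuity of measure from below,
$$\P\left( \bigcup_{t \geq 1} \{ P_t = 1 \} \right) = \lim_{\alpha \to \infty} \P\left( \bigcup_{1 \leq t \leq (1+\alpha)\arl_K(\varphi)} \{ P_t = 1 \} \right) \geq \lim_{\alpha \to \infty} \frac{\alpha}{1+\alpha} = 1.$$
Thus almost surely there is a (random) time at which $P_t = 1$; combined with $P_t \leq 1$ from the previous step, this yields $\sup_{t \in \N} P_t = 1$ almost surely.

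The main obstacle is conceptual rather than computational: pinning down what $\sup_t \ccd(\varphi,\xi,t)$ should mean, and isolating the degenerate configurations. If one insisted on the deterministic reading $\sup_t \E[P_t]$, the statement would be false in general, because the random time carrying $P_t = 1$ (essentially $\tau^*_K(\varphi)$) is spread out over $\N$, so each fixed $t$ can have small $\E[P_t]$ even while $\sup_t P_t = 1$ almost surely. I would also flag that the argument (and \Cref{prop:ccdcpd} itself) tacitly requires at least one genuine change to have occurred by the time all $K$ streams are declared: under the global null $\xi_G$ one has $P_t \equiv 0$, so the $\vee 1$ convention forces the supremum to $0$ and the corollary must exclude that case. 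Making this explicit — e.g.\ assuming $\xi \neq \xi_G$ and noting that $P_{\tau^*_K(\varphi)} = 1$ exactly when some $\xi^{(k)} \leq \tau^*_K(\varphi)$ — is the only place where genuine care is needed.
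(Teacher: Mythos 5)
Your proof is correct and follows essentially the same route as the paper: the paper's one-line argument (``apply the dominated convergence theorem to the inequality in \Cref{prop:ccdcpd}, since CCD is bounded in $[0,1]$'') is exactly your continuity-from-below step, letting $\alpha \to \infty$ so the windowed union of events $\{P_t = 1\}$ exhausts $\bigcup_{t \in \N} \{P_t = 1\}$ while the lower bound $\alpha/(1+\alpha)$ tends to $1$. Your added observations --- that the supremum must be read pathwise over the random CCD proportion rather than as $\sup_t \E[P_t]$, and that degenerate configurations (e.g.\ the global null, or changepoints occurring after $\tau^*_K(\varphi)$) must be excluded --- are genuine caveats that the paper's statement and proof gloss over, but they refine rather than replace the paper's approach.
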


\begin{proof}
    Apply the dominated convergence theorem to the inequality in \Cref{prop:ccdcpd} (since we know that CCD is uniformly bounded in $[0, 1]$) as $t \to \infty$.
\end{proof}

\section{Background on multiple testing with e-values}

\par Here, we discuss some of the procedures that inspired our algorithms. The e-BH (e-Benjamini Hochberg) procedure described in \citet{wang2022false} provides a method for false discovery rate control using e-values. The method described in this paper (the e-d-BH procedure) is an extension of the e-BH procedure to multi-stream sequential change detection. The e-BH procedure is in turn related to the popular Benjamini-Hochberg method for controlling the FDR \citep{benjamini1995controlling}.
\definition[e-variables]{A nonnegative random variable e is an \textit{e-variable} for $\mathcal{P}$ if $\E[E] \leq 1$, where the expectation can be taken with respect to any distribution $\P \in \mathcal{P}$. A realized value of an e-variable (after looking at the data) is called an \textit{e-value}.}

\par Note that the e-variable is useful for the same reason the e-process is useful; it is a measure of evidence against the null. By Markov's inequality, we have for all $\P \in \mathcal{P}$ and $\alpha > 0$:
\begin{align*}
    \P\left( E \geq \frac{1}{\alpha} \right) \leq \alpha.
\end{align*}
One advantage of using an e-value over a p-value is that e-values can be easily combined (for instance, in the setting of multiple testing). Suppose we have $K$ null hypotheses that we are testing, denoted $\mathcal{H}_0^{(1)}, \mathcal{H}_0^{(2)}, \dots \mathcal{H}_0^{(K)}$. Fix $\alpha > 0$; we are going to control the false detection rate at level $\alpha$. The simplest version of the e-BH procedure can then be described as follows. Suppose we have e-values $e_k$ associated with the hypotheses $\mathcal{H}_0^{(k)}$ for $1 \leq k \leq K$. Then, let $e_{[k]}$ be the $k$th order statistic of $(e_k)_{k=1}^K$, sorted from largest to smallest. For instance, $e_{[1]}$ will be the largest e-value. Then, we reject the hypotheses associated with the largest $k^*$ e-values, where:
\begin{align*}
    k^* = \max\left\{ k \in [K] : \frac{k e_{[k]}}{K} \geq \frac{1}{\alpha} \right\}.
\end{align*}
Then, the e-BH procedure controls the FDR (expected number of false rejections to total rejections) at level $\alpha k_0 / K$, where $k_0$ is the number of null hypotheses which are true. In fact, this holds under \emph{arbitrary dependence} between the e-values. A full proof of this statement, as well as a full discussion of the e-BH procedure (boosting the e-values, etc.), can be found in \citet{wang2022false}.

\par Similarly, the e-Bonferroni procedure (as described in \citet{vovk2021values}) is to reject hypothesis $\mathcal{H}_{0k}$ when $e_k \geq K/\alpha$, and gives a guarantee on the FWER. In particular, the e-Bonferroni procedure controls the FWER (expected number of false rejections to total rejections) at level $\alpha$. In fact, this holds under \emph{arbitrary dependence} between the e-values. A full proof of this statement, as well as a full discussion of the e-Bonferroni procedure, can be found in \citet{vovk2021values}.

\par Finally, note that e-values can be combined by averaging. \citet{vovk2021values} point out that if $e_1, \dots, e_K$ are e-values for $\mathcal{P}$, then their arithmetic mean $\overline{e} = \frac{1}{K} \sum_{k=1}^K e_k$ is also an e-value for $\mathcal{P}$ (by linearity of expectation), under \emph{arbitrary dependence} between the e-values. Consider the global null hypothesis $\mathcal{H}_0 = \cap_{k=1}^K \mathcal{H}_{0k}$. We can then test $\mathcal{H}_0$ using the e-value $\overline{e}$, rejecting when $\overline{e} \geq 1/\alpha$ for a level $\alpha$ test; this is called \emph{global null testing}. Before presenting our results, we review some related work.

\end{appendices}

%------------------------------------------------------------
% END DOCUMENT
%------------------------------------------------------------
\end{document}